\documentclass[a4paper,11pt]{article}

\usepackage[american]{babel} 
\usepackage[utf8]{inputenc}
\usepackage[a4paper, left=3cm, right=3cm, top=2.5cm, bottom=2.5cm]{geometry}
\usepackage{amssymb,amsmath,amsthm}
\usepackage{mathrsfs}
\usepackage{dsfont}

\usepackage{graphicx}
\usepackage{color}
\usepackage{tikz}

\numberwithin{equation}{section} % Adjusts Numbering of Equations
\usepackage{comment}
\usepackage{hyperref}
\newcommand{\eps}{{\varepsilon}}

\newcommand{\NN}{\nonumber}

\newcommand{\N}{\mathds{N}}

\newcommand{\R}{\mathds{R}}
\newcommand{\C}{\mathds{C}}
\newcommand{\K}{\mathds{K}}

\DeclareMathOperator{\re}{Re}

\DeclareMathOperator{\Tr}{Tr}

\DeclareMathOperator{\Cov}{Cov}

\DeclareMathOperator{\Id}{Id}
\DeclareMathOperator{\diag}{diag}
\DeclareMathOperator{\supp}{supp}
\DeclareMathOperator{\Spec}{Spec}

\DeclareMathOperator{\dist}{dist}

\renewcommand{\P}{\mathds{P}}
\newcommand{\E}{\mathds{E}}

\newcommand{\I}{\mathds{1}}

\renewcommand{\epsilon}{\varepsilon}

\renewcommand{\leq}{\leqslant}
\renewcommand{\geq}{\geqslant}
\renewcommand{\le}{\leq}
\renewcommand{\ge}{\geq}

\newcommand{\dx}{{\mathrm d}}

\newcommand{\rate}{\mathcal{I}}

\newtheorem{theorem}{Theorem}[section]         
\newtheorem{lemma}[theorem]{Lemma}             
\newtheorem{corollary}[theorem]{Corollary}

\newtheorem{assumption}[theorem]{Assumption}

\newtheoremstyle{myrem}%                % Name
  {}%                                     % Space above
  {}%                                     % Space below
  {}%                                     % Body font
  {}%                                     % Indent amount
  {\bfseries}%                            % Theorem head font
  {.}%                                    % Punctuation after theorem head
  { }%                                    % Space after theorem head, ' ', or \newline
  {\thmname{#1}\thmnumber{ #2}\thmnote{\normalfont{ (#3)}}}
\theoremstyle{myrem}
\newtheorem*{remark}{Remark}

\begin{document}
\title{\vspace{-2cm} Large deviations for the largest eigenvalue of a polynomial in Gaussian matrices}
\author{Alice Guionnet\thanks{UMPA, ENS de Lyon, France. E-mail: alice.guionnet@ens-lyon.fr.} \and Jonathan Husson\thanks{LMBP, Université Clermont Auvergne, France. E-mail: jonathan.husson@uca.fr.} \and Jana Reker\thanks{ICJ, Université Lyon 1, France. E-mail: reker@math.univ-lyon1.fr.}}

\maketitle

\begin{abstract}In this article, we prove that the spectrum, the norm, and the largest eigenvalue of a self-adjoint polynomial $P$ in $k$ independent matrices from the GOE or the GUE satisfy a large deviation principle.
\end{abstract}

\textbf{AMS Subject Classification (2020):} 60F10, 60B20, 15B52.\\
\textbf{Keywords:} Large Deviations, Spherical Integral, Polynomials in Gaussian Matrices,\\ Voiculescu Entropy.

\section{Introduction}
Large deviations theory for random matrices was initially motivated by Voiculescu’s work, which sought to develop an entropy theory for non-commutative variables as a framework for studying von Neumann algebras. Here, the concept of microstates entropy was first introduced by asymptotically measuring the volume of matrices whose polynomial traces approximate those of operators in a von Neumann algebra \cite{Voiculescu93, Voiculescu94}. This idea was later extended to a topological entropy by replacing the trace with the operator norm, effectively measuring the asymptotic volume of matrices for which the norms of polynomials in Gaussian matrices closely match those of given operators. However, except in the case of a single matrix, these entropy theories remain incomplete. Many expected properties have yet to be proven, primarily because a full large deviation principle (LDP) has not been established. As a result, entropies can be defined using either a limsup or liminf approach, both definitions being eventually different. Properties of the entropies such as summability under freeness are then unclear. 
In \cite{Austin-split2}, Austin introduced a notion of entropy analogous to sofic entropy for sequences of unitary representations of separable $C^*$-algebras. He defined an annealed almost periodic (AP) entropy by measuring the volume of polynomials in independent unitary matrices whose scalar evaluations—along a finite set of vectors—approximate those of a given completely positive map. The term annealed reflects the fact that the vectors are chosen randomly, and one evaluates the volume after averaging with respect to their distribution. Additionally, Austin introduced a zeroth-order AP entropy, which characterizes the probability that a given completely positive map arises as the positive functional associated with a representation via some vector in that representation. One of the key consequences of this new entropy theory is a large deviation principle for the largest eigenvalue of a polynomial in unitary matrices under the Haar measure on the unitary group~\cite[Cor.~14.4]{Austin-split2}.

\medskip
In this article, we prove that the spectrum, the largest eigenvalue, and the norm of a self-adjoint polynomial $P$ in~$k$ independent matrices drawn from the Gaussian Orthogonal Ensemble (GOE) or the Gaussian Unitary Ensemble (GUE) satisfy a large deviation principle. The results extend to the case where we consider the joint large deviations of the spectrum of several polynomials. Note that our result differs from~\cite[Cor.~14.4]{Austin-split2} as the underlying random matrix model is different: we consider Gaussian matrices rather than unitaries. It may be possible to deduce it from an extension of \cite[Cor.~14.4]{Austin-split2} that would include deterministic matrices, by diagonalizing the Gaussian matrices. This, however, seems quite a non-trivial extension. Moreover, our approach does not require applying the contraction principle to deduce the desired large deviation principle from a more complicated object. As a result, we obtain shorter proofs and a quite concrete rate function.

\medskip
Our strategy follows the idea of tilting by spherical integrals, which was originally introduced in \cite{GuionnetHusson2020} to establish the large deviations for the largest eigenvalue of Wigner matrices with sharp sub-Gaussian entries such as uniform or Rademacher variables. This argument was refined in \cite{AGH2021, CookDG23} to include Wigner matrices with rather general sub-Gaussian entries. Moreover, this new theory was applied to obtain the large deviation principles of the largest eigenvalue of more structured matrices such as covariance matrices \cite{Husson2022, HussonMcKenna2023, DGH2024}, Kronecker matrices \cite{LDP_Kronecker} or sums of a deterministic matrix and a Gaussian matrix \cite{Mckenna21,BoursierG24}. Our approach combines ideas from \cite{LDP_Kronecker,HussonMcKenna2023} and the linearization trick in~\cite{HaagerupTobjornsen2005, HST2006}. 

\medskip
\textbf{Acknowledgements:} A.G. and J.R. were partially supported by ERC Advanced Grant ``LDRaM'' No. 884584. An LLM was used to verify and correct the covariance calculation in Lemma \ref{lem-annealedcalc}, and its generalization to the GUE case in Lemma \ref{lem-annealedcalc-GUE}. The rest of the article is not affected by these changes.

\subsection{Assumptions on the Model}
We consider a $N\times N$ random matrix of the form
\begin{displaymath}
P_{\beta,N}:=P(W_1^{(N)},\dots,W_k^{(N)})\, ,
\end{displaymath}
where $P$ is a non-commutative polynomial in $k$ variables independent of $N$, and $W_1^{(N)},\dots,W_k^{(N)}$ are i.i.d.  random matrices. For simplicity, we usually drop the $N$-dependence of the random matrices unless we want to emphasize it specifically. The parameter $\beta\in\{1,2\}$ distinguishes between a real ($\beta=1$) and a complex ($\beta=2$) variant of the model, for which we make the following assumptions.

\begin{assumption}\label{assum-main}
Fix $\beta\in\{1,2\}$.
\begin{itemize}
\item[$\beta=1$] The matrices $W_1,\dots, W_k$ are sampled independently from the Gaussian Orthogonal Ensemble (GOE), i.e., $W_1=W_1^T$ and $(W_1)_{ij}$, $i\leq j$, are independent real centered Gaussian random variables with variance $(1+\I_{\{i=j\}})/N$. Moreover, we assume the coefficients of the polynomial $P$ to be $N$-independent real constants, and $P(X_1,\dots,X_k)$ to be symmetric if its arguments $X_1,\dots,X_k$ are.
\item[$\beta=2$] The matrices $W_1,\dots,W_k$ are sampled independently from the Gaussian Unitary Ensemble (GUE), i.e., $W_1=W_1^*$, where $(W_1)_{ij}$, $i<j$, are independent complex centered Gaussian random variables with variance $1/N$ (meaning that their real and imaginary parts are independent Gaussians of variance $1/(2N)$), and $(W_1)_{jj}$ are independent real centered Gaussian random variables with variance $1/N$ that are further independent from $(W_1)_{ij}$, $i<j$. Moreover, we assume the coefficients of the polynomial $P$ to be $N$-independent complex constants, and $P(X_1,\dots,X_k)$ to be self-adjoint if its arguments $X_1,\dots,X_k$ are.
\end{itemize}
\end{assumption}

By construction, $P_{\beta,N}=P(W_1^{(N)},\dots,W_k^{(N)})$ is a random matrix with a complicated correlation structure. However, thanks to Voiculescu \cite{Voiculescu91}, we know that the empirical measure $\hat\mu_{P_{\beta,N}}$ of the eigenvalues $\Spec(P_{\beta,N})=(\lambda_i(P_{\beta,N}))_{1\le i\le N}\in \R^N$ of $P_{\beta,N}$, denoted
$$\hat\mu_{P_{\beta,N}}=\frac{1}{N}\sum_{i=1}^N\delta_{\lambda_i(P_{\beta,N})}\, ,$$
converges almost surely and in expectation towards a probability measure $\mu_P$ which is the distribution of the spectrum of the operator $P(S_1,\ldots,S_k)$, where $S_1,\dots,S_k$ are free semi-circular elements. The measure $\mu_P$ has a  connected  compact support because the $C^*$-algebra of the free semi-circular variables is projectionless \cite{VoiculescuPim}. Moreover, by \cite{HaagerupTobjornsen2005, HST2006}, we know that the support of the spectrum $\Spec(P_{\beta,N})$ of the self-adjoint matrix $P_{\beta,N}$ converges towards the support of $\mu_P$. As a consequence, the largest eigenvalue $\lambda_1(P)=\lambda_1(P_{\beta,N})$ (resp. the smallest eigenvalue $\lambda_N(P)=\lambda_N(P_{\beta,N})$) of $P(W_1^{(N)},\dots,W_k^{(N)})$ converges almost surely towards the right boundary $r_P$ (respectively the left boundary $\ell_P$) of the support of $\mu_P$.

\subsection{Main Results}
We are aiming to show the following:

\begin{theorem}\label{thm-main}
Let $P_{\beta,N}=P(W_1^{(N)},\ldots,W_k^{(N)})$ such that Assumption~\ref{assum-main} holds.
\begin{itemize}
\item[(i)] The spectrum $\Spec(P_{\beta,N})$ of $P_{\beta,N}=P(W_1^{(N)},\ldots,W_k^{(N)})$ satisfies a large deviation principle with speed $\beta N$ and a  good rate function $\mathcal{I}_{\Spec(P)}^\beta$. Namely, for every closed subset $F$ of $\R$,
$$\limsup_{N\to\infty}\frac{1}{\beta N}\ln \P(\Spec(P_{\beta,N})\cap F\neq \emptyset)\le-\inf_F\mathcal{I}_{\Spec(P)}^{\beta}\,,$$
and for every open set $O\subset\R$
$$\liminf_{N\to\infty}\frac{1}{\beta N}\ln \P(\Spec{(P_{\beta,N})}\cap O\neq \emptyset)\ge-\inf_O\mathcal{I}_{\Spec(P)}^{\beta}\,.$$
$\mathcal{I}_{\Spec{(P)}}^{\beta}$ increases above $r_P$, decreases below $\ell_P$, is such that $\mathcal{I}_{\Spec{(P)}}^{\beta}(x)=0$ on the support of $\mu_P$, including $r_P$ and $\ell_P$.  $\mathcal I_{\Spec{(P)}}^{\beta}$ is positive on $]-\infty,\ell_P[\cup ]r_P,+\infty[$.
\item[(ii)] The largest eigenvalue of $P(W_1^{(N)},\dots,W_k^{(N)})$ satisfies a large deviation principle with speed $\beta N$ and an increasing good rate function $\mathcal{I}_{\lambda_{\max}(P)}^{\beta}$. Moreover, $\mathcal{I}_{\lambda_{\rm{max}}(P)}^{\beta}(x)=\mathcal{I}_{\Spec(P)}^{\beta}(x)$  for $x\ge r_P$, whereas $\mathcal{I}_{\lambda_{\rm{max}}(P)}^{\beta}(x)=+\infty$ for $x<r_P$.
\item[(iii)]{The operator norm  of $P(W_1^{(N)},\ldots,W_k^{(N)})$ satisfies a large deviation principle with speed $\beta N$ and a good rate function $\mathcal{I}_{\|P\|_{\rm{op}}}^{\beta}$. Moreover, $\mathcal{I}_{\|P\|_{\rm{op}}}^{\beta}(x)$ is infinite if $x<\max\{|r_P|,|\ell_P|\}$ and otherwise equals 
$\min\{\mathcal{I}_{\Spec(P)}^{\beta}(x),\mathcal{I}_{\Spec(P)}^{\beta}(-x) \}$.}
\end{itemize}
\end{theorem}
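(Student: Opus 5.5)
The approach is to tilt by spherical integrals, combined with linearization. By the linearization trick of \cite{HaagerupTobjornsen2005, HST2006}, we fix $m\ge 1$ and self-adjoint matrices $a_0,a_1,\dots,a_k\in M_m(\C)$ (real symmetric if $\beta=1$) such that, for $z$ outside the spectrum, $z-P(X_1,\dots,X_k)$ is invertible if and only if
$$L_z(X)=\gamma_z\otimes I_N-a_0\otimes I_N-\sum_i a_i\otimes X_i$$
is invertible. Here $\gamma_z$ is a fixed matrix whose only nonzero entry is a $z$ in position $(1,1)$. The $(1,1)$ block of $L_z^{-1}$ is $(z-P)^{-1}$. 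This reduces eigenvalues of $P_{\beta,N}$ outside $[\ell_P,r_P]$ to the "outlier" problem for the linear pencil $\sum_i a_i\otimes W_i$, which is a Gaussian matrix with Kronecker structure as in \cite{LDP_Kronecker}. I would therefore prove part (i) in three steps and then deduce (ii) and (iii).

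\textbf{Step 1: exponential tightness and localisation.} The norms $\|W_i\|$ satisfy an LDP at speed $\beta N$ with rate tending to infinity, so $\Spec(P_{\beta,N})$ lies in a fixed compact set up to a superexponentially small probability. The empirical measure $\hat\mu_{P_{\beta,N}}$ deviates from $\mu_P$ only at speed $N^2$. Hence at speed $N$ we may assume that all $W_i$ are close to the semicircular law in moments, and that at most a bounded number of eigenvalues lie outside a neighbourhood of $[\ell_P,r_P]$. This gives $\mathcal I\equiv0$ on $[\ell_P,r_P]$, since the lower bound there is trivial by \cite{HST2006}.

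\textbf{Step 2: upper bound via tilting.} For $x>r_P$ and $\theta\ge0$, consider the annealed spherical integral
$$\E_e\,\E_W\bigl[\exp(\beta N\theta\langle e,P_{\beta,N}e\rangle)\bigr],$$
with $e$ uniform on the sphere. Conditioning on $e$, the $W_i$ are Gaussian, so $\langle e,P e\rangle$ is a polynomial in Gaussians. Using rotation invariance, the law of $W_i$ restricted to the vectors $e,W_je,\dots$ is computable. This is exactly the kind of calculation in Lemma \ref{lem-annealedcalc} (respectively \ref{lem-annealedcalc-GUE}): the exponential moment has the form $\exp(\beta N F(\theta)+o(N))$, where $F(\theta)$ is a variational formula over the Gaussian covariance of the finitely many vectors $W_{i_1}\cdots W_{i_\ell}e$. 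On the event $\{\lambda_{\max}\approx x\}$, and after controlling $\hat\mu$ as in Step 1, the quenched spherical integral equals $\exp(\beta N J(\mu_P,\theta,x)+o(N))$ by the Guionnet--Maïda asymptotics. Chebyshev's inequality then gives
$$\limsup\frac1{\beta N}\ln\P(\lambda_{\max}\approx x)\le -\sup_\theta\{J(\mu_P,\theta,x)-F(\theta)\}.$$
A technicality is that the spherical integral of a polynomial with unbounded growth must be truncated to the compact set from Step 1, for instance by replacing $W_i$ with their spectral truncations.

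\textbf{Step 3: lower bound (main obstacle).} We tilt the law of $(W_i,e)$ by the spherical integral at $\theta$. Under the tilted measure we must show that $\lambda_{\max}(P)$ concentrates at the value $x_\theta$ for which the supremum is attained. Under the tilt, the $W_i$ become Gaussian with a finite-rank, explicit mean and covariance perturbation along the directions $W_{i_1}\cdots e$. By the linearization, the outliers of $P$ are the zeros of a finite-dimensional determinant, obtained through a Schur complement of $L_z$ involving only the free resolvent and the finite-rank data. This is the approach of \cite{HussonMcKenna2023, BoursierG24}.

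The difficulty is that the map $\theta\mapsto x_\theta$ may fail to cover all $x>r_P$. As in \cite{LDP_Kronecker}, I would handle this by defining $\mathcal I$ through the quantity
$$\mathcal I(x)=\sup_\theta\{J-F\},$$
proving the lower bound only at points $x$ that are achieved, and closing the gap by a convexity-type argument. If that fails, the fallback is to tilt simultaneously by several spherical integrals, that is, by finite-rank deformations in the Gaussian directions, so that every $x>r_P$ is reachable. The symmetric treatment of $-P$ gives the statement below $\ell_P$. Positivity and monotonicity of $\mathcal I$ follow because the tilt at $\theta$ near $0$ gives $x_\theta\downarrow r_P$ and $F$ is strictly separated from $J$.

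\textbf{Deducing (ii) and (iii).} For (ii), the event $\{\lambda_{\max}\approx x\}$ with $x>r_P$ coincides, up to negligible probability, with $\{\Spec\cap(x-\delta,x+\delta)\ne\emptyset\}$. The event $\lambda_{\max}<r_P-\delta$ has probability at most $e^{-cN^2}$ by Step 1. For (iii), we use $\|P\|_{\rm op}=\max(\lambda_{\max}(P),-\lambda_{\min}(P))$ together with the contraction principle, where the joint LDP of the two extreme eigenvalues follows from the asymptotic independence of separated outliers, which is read off from the same tilting argument.
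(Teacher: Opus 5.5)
\textbf{There is a genuine gap in Steps 2--3.} You tilt by the spherical integral of $P_{\beta,N}$ itself. That is not the computation of Lemma~\ref{lem-annealedcalc}. The lemma works because $L_z^{(N)}u$ is affine in the $W_j$, hence Gaussian given $u$, and because the exponent $-\frac{\theta}{2}N\ell\|L_z^{(N)}u\|^2$ is non-positive. For $\deg P\ge 2$, $\langle e,P_{\beta,N}e\rangle$ is a non-Gaussian chaos, and the annealed integral is typically infinite. For $P=X^3$ and $e=e_1$, the entry $W_{11}$ enters through $W_{11}^3$, so the expectation is $+\infty$ for every $\theta>0$.

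Truncation does not rescue the scheme, because the obstruction is structural. On $]r_P,\infty[$ the map $y\mapsto J(\mu_P,\theta,y)$ is convex. Hence $y\mapsto J(\mu_P,\theta,y)-\mathcal I(y)$ has no interior maximiser on an interval where $\mathcal I$ is strictly concave, so the tilted law of $\lambda_{\max}$ can never concentrate there. The rate functions in this problem are not convex: $\mathcal I_{\lambda_{\max}(X^3)}(x)=\mathcal I_{\lambda_{\max}(X)}(x^{1/3})$ is strictly concave for $x>16\sqrt2$ and grows like $x^{2/3}/4$. So on such regions no $\theta$ reaches $x$. Neither a convexity-type argument nor the unspecified multi-tilt fallback closes the lower bound. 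Your candidate formula $\sup_\theta\{J-F\}$ would even give the wrong rate function there.

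\textbf{The missing idea.} Tilt instead the positive matrix $L_z^{(N)}(L_z^{(N)})^*$ by $I_{N\ell}(-\theta,\cdot)$. This targets its smallest eigenvalue $0$, which is an edge, at a fixed $z$; then let $\theta\to\infty$. Now $z$ is a parameter of the matrix, not a location to be reached by tuning $\theta$, so non-convexity of the rate function is irrelevant.
\begin{itemize}
\item $\widetilde F$ is convex with $\widetilde F(\theta)/\theta\to-\ell\sigma_0/2$, so the tilted mean of $\|L_zu\|^2$ tends to $\sigma_0$. When $\sigma_0=0$, the event $\{\lambda_{N\ell}(L_zL_z^*)\le\eps\}$ therefore becomes typical under the tilt.
\item When $\sigma_0>0$, $\mathcal I_{L_z}=+\infty$, because $J_{\nu_z}(\theta)$ grows only like $\ln\theta$.
\item Replacing $L_zL_z^*$ by $H_z$ in the denominator avoids an $\eps\theta$ error.
\end{itemize}
Combined with the comparison between $\P[\dist(z,\Spec(P_N))\le\eps]$ and $\P[\lambda_{N\ell}(L_zL_z^*)\le\delta(\eps)]$, this gives the spectrum LDP directly (Theorem~\ref{thm-Lbounds}), with no need to locate outliers under the tilt.

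\textbf{Your deduction of (ii) also has a gap.} The events $\{\lambda_{\max}\approx x\}$ and $\{\Spec(P_N)\cap(x-\delta,x+\delta)\neq\emptyset\}$ do not coincide up to negligible probability a priori; the latter contains $\{\lambda_1\approx y,\ \lambda_2\approx x\}$ for $y>x$. What is needed is that $\mathcal I_{\Spec(P)}$ is increasing on $]r_P,\infty[$. This does not follow from ``$x_\theta\downarrow r_P$''. The paper proves it by an Ornstein--Uhlenbeck interpolation, which gives, for $r_P<y<x$,
$\P(\Spec(P_N)\cap[x-\delta,x+\delta]\neq\emptyset)\le 2N^2\,\P(\Spec(P_N)\cap[y-\delta,y+\delta]\neq\emptyset)+e^{-cN^{1+1/(2D)}\delta^2}$,
and it repeats the argument for $\lambda_1$ itself.

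\textbf{Part (iii) needs no joint LDP of the extreme eigenvalues.} It suffices to use $\|P_N\|_{\rm op}=\lambda_1(P_NP_N^*)^{1/2}$, where $PP^*$ is again a polynomial, together with $\{\|P_N\|_{\rm op}\ge x\}=\{\lambda_1(P_N)\ge x\}\cup\{\lambda_1(-P_N)\ge x\}$.
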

Note that the rate functions are not convex in general. When $k=1$ and $P(x)=x$, the rate function is known \cite{BenarousDG01} : $\mathcal I_{\lambda_{\rm{max}}(X)}^\beta(x)=\frac{1}{2}\int_2^x\sqrt{y^2-4} dy$ on $[2,+\infty)$. It is convex but $\mathcal I_{{\lambda_{\rm{max}}(X^3)}}^\beta(x)=\mathcal I_{\lambda_{\rm{max}}(X)}^\beta(x^{1/3})$ is not for instance.
The proof of Theorem~\ref{thm-main} for the case $\beta=1$ is given in Section~\ref{Sec:proofthm}, and the modifications for the case $\beta=2$ are outlined in Section~\ref{sect-betas}. We will see that $\mathcal{I}_{\Spec(P)}^{\beta}(z)=\mathcal{I}_{L_z}$ for $z\in\R$ with $\mathcal{I}_{L_z}^{\beta}$ defined in Theorem \ref{thm-Lbounds}, based on a linearization $L_z^{(N)}$ of the polynomial $P$. Note that a given polynomial can be associated with many different linearizations, however, our result implies that $\mathcal{I}_{L_z}^{\beta}$ only depends on $P$ and $z$. Although the rate function looks complicated at first glance, $\mathcal{I}_{L_z}^{\beta}$ is given by a $\theta\to\infty$ limit and involves only finite dimensional objects. We, therefore, expect the resulting rate function to be accessible numerically. 

\medskip
Theorem~\ref{thm-main} generalizes to a joint large deviation principle for the spectrum of several polynomials. 
\begin{corollary}\label{cor-multiP}
Let $r$ be an integer and consider $r$ polynomials $P_1,\ldots, P_r$ satisfying Assumption \ref{assum-main}. Then, the distribution of  

\noindent
$(\Spec(P_1(W_1^{(N)}, \ldots, W_k^{(N)})),\ldots, \Spec(P_r(W_1^{(N)}, \ldots, W_k^{(N)})))$ satisfies a large deviation principle with speed $\beta N$ and a good rate function $\mathcal{I}_{\Spec(P_i), 1\le i\le r}^{\beta}$.
\end{corollary}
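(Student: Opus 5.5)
Here is how I would reduce the corollary to the single-polynomial machinery. The natural state space for the tuple is $\mathcal{K}^r$, where $\mathcal{K}$ is the space of closed subsets of $\R$ with the Hausdorff/Fell topology. The approach is to prove a weak LDP together with exponential tightness, and to obtain the upper and lower bounds on basic events involving finitely many polynomials simultaneously.

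\textbf{Exponential tightness.} This is immediate. Each $\|P_i(W_1,\dots,W_k)\|_{\rm op}$ is bounded by a polynomial in $\max_j\|W_j\|_{\rm op}$. The norm of a GOE/GUE matrix has Gaussian tails at speed $N$, so $\P(\max_i\|P_i\|_{\rm op}>M)$ decays faster than $e^{-\beta N C}$ for any $C$ once $M$ is large. Hence all spectra lie in a fixed compact interval with overwhelming probability.

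\textbf{Reduction to a local rate function.} Because the spectra are finite sets, the relevant events are of the form ``$\Spec(P_i)$ intersects $O_i$ for $i\in A$, and $\Spec(P_i)$ avoids $F_i$ for $i\in B$''. The avoidance part costs nothing at the scale $\beta N$ near the typical configuration. So the LDP reduces to computing
\[
-\lim_{\delta\to 0}\lim_{N\to\infty}\frac{1}{\beta N}\ln\P\bigl(\exists\,\text{eigenvalue of }P_i\text{ in }(z_i-\delta,z_i+\delta),\ i\in A\bigr)
\]
for finitely many prescribed points $(z_i)_{i\in A}$, and the analogous quantity for a finite set of points per polynomial. The candidate rate function $\mathcal{I}^\beta_{\Spec(P_i),1\le i\le r}$ is then defined as
\[
\sup_{\text{neighborhoods}}\ \limsup_{N\to\infty}\bigl(-\tfrac{1}{\beta N}\ln\P\bigr)
\]
of these events. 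The substance lies in showing that the corresponding $\liminf$ agrees with it.

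\textbf{Extending the proof of Theorem~\ref{thm-main}.} I would run the argument of Theorem~\ref{thm-main} with a joint linearization. Take a block-diagonal linearization $\mathcal{L}_{z}=\diag\bigl(L^{(1)}_{z_1},\dots,L^{(r)}_{z_{r'}}\bigr)$, with one block for each required point of each polynomial, all driven by the same $W_1,\dots,W_k$. Then tilt by a product of spherical integrals, one with an independent vector $e_j$ per block. By Theorem~\ref{thm-Lbounds} applied blockwise, the annealed computation (Lemma~\ref{lem-annealedcalc}) now involves the joint covariance of the quadratic forms $\langle e_j, W_\ell e_{j'}\rangle$ across blocks. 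This yields a finite-dimensional variational formula, with a $\theta\to\infty$ limit, for the rate, depending on the Gram matrix of the tilting vectors.

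The upper bound follows by the same Chebyshev argument. The lower bound requires showing that under the tilted measure the eigenvector configuration concentrates, so that near the optimizer each block has an outlier at the prescribed location while the bulks remain near $\mu_{P_i}$.

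\textbf{Main obstacle.} I expect the lower bound to be the crux. The eigenvectors of different polynomials interact through the common $W_j$'s, so the optimizers no longer decouple. One must prove uniqueness, or at least identify the tilted limit, of the joint overlap matrix of the outlier eigenvectors. The first strategy I would try is to include all overlaps $\langle e_j,e_{j'}\rangle$ as additional parameters, prove an LDP for the $W_j$ restricted to the finite-dimensional span of these vectors, and then apply the contraction principle from this finite-dimensional LDP together with bulk concentration. Goodness of the rate function then follows from exponential tightness.
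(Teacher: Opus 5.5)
Your upper bound follows the paper's route. It uses the block-diagonal linearization $\diag(L^{P_1}_{x_1},\dots,L^{P_r}_{x_r})$ with independent unit vectors per block. It applies the Gaussian computation of Lemma~\ref{lem-annealedcalc} to this block-diagonal matrix; since the joint covariance across blocks is what matters, this is not Theorem~\ref{thm-Lbounds} applied blockwise. It then applies Varadhan's lemma to the Gram matrix $\rho_r(u)$, whose LDP on positive matrices with unit-trace diagonal blocks has rate $-\frac12(\ln\det\Psi+\sum_i\ell_i\ln\ell_i)$.

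\textbf{The lower bound.} This is the genuine gap, and you leave it open. The obstacle you name (uniqueness of optimizers, or identifying the tilted limit of the joint overlap matrix via an LDP for the $W_j$ on a finite-dimensional span plus contraction) is one the paper never faces. The missing idea is that the one-polynomial lower-bound mechanism transfers verbatim.

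Let $\sigma_{0,r}$ be the minimum of $\|(\sum_jA_j\otimes M_j+A_0\otimes\Id_N)u\|_2^2$ over symmetric $M_j$ and over $u$ with unit-norm blocks. As in Lemma~\ref{lem-claim1}, it does not depend on $N$.
\begin{itemize}
\item If $\sigma_{0,r}>0$, the annealed integral is at most $e^{-N\theta\sigma_{0,r}/2}$ while the $J$'s grow like $\ln\theta$. The upper bound already gives rate $+\infty$ (Lemma~\ref{lem-claim2}).
\item If $\sigma_{0,r}=0$, the localization argument of Lemma~\ref{lem-claim3} gives $\widetilde F(\theta)=o(\theta)$. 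Set $g_N(\theta):=\frac1N\ln\E_W\E_u[e^{-\frac\theta2N\sum_i\ell_i\|L^{P_i}_{x_i}u_i\|_2^2}]$. It is convex with $g_N'(\theta)=-\frac12\E^{(\theta)}[\sum_i\ell_i\|L^{P_i}_{x_i}u_i\|_2^2]$. The slope inequality then gives $\limsup_N\E^{(\theta)}[\sum_i\ell_i\|L^{P_i}_{x_i}u_i\|_2^2]\le2(\widetilde F(\theta-1)-\widetilde F(\theta))$. This tends to $0$ because $\widetilde F$ is convex, nonincreasing and $o(\theta)$.
\end{itemize}
In the second case, Markov's inequality shows that the annealed tilted measure gives probability close to one to $\{\lambda^{\min}_{x_1,\dots,x_r}\le\epsilon\}$. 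The bulk and norm controls survive the tilt: their complements have probability $e^{-CN}$ with $C$ arbitrarily large, while the tilt inflates probabilities by at most $e^{N(|\widetilde F(\theta)|+o(1))}$. Bound the quenched denominators blockwise, setting the smallest eigenvalue to $0$ as with $H_z$. You then get $\liminf_N\frac1N\ln\P\ge\widetilde F(\theta)-\sum_i\ell_iJ_{L^{P_i}_{x_i}}(\theta)-o(1)$ for every large $\theta$, matching the upper bound's $\liminf_\theta$. The rate is \emph{defined} as this $\theta\to\infty$ limit. So no variational problem is solved, no outlier eigenvector is located, and nothing has to decouple.

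\textbf{The reduction step.} This contains a second, independent gap. Working on $\mathcal K^r$ with the Hausdorff/Fell topology forces you to lower-bound events that combine hits with misses. You dispose of the misses by asserting that they ``cost nothing''. That is true at the typical configuration. At a configuration with outliers, however, it amounts to showing that, given eigenvalues near the prescribed points, no further outliers appear. The tilting argument gives nothing of the sort: it only makes the smallest singular values of the chosen blocks small.

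For $r=1$, the paper obtains the analogous refinement ($\lambda_1$ near $x$, i.e.\ hit $[x-\delta,x+\delta]$ and miss $(x+\delta,\infty)$) only through the Ornstein--Uhlenbeck monotonicity argument. It remarks right after the corollary that this argument does not carry over a priori to $r>1$. Accordingly, the corollary is proved only in the hitting sense of Theorem~\ref{thm-main}(i). That is, exponential tightness plus a weak LDP, with a rate function on $\R^r$, for $\mathcal{B}_\varepsilon(x_1,\dots,x_r)=\bigcap_i\{\Spec(P_i)\cap[x_i-\varepsilon,x_i+\varepsilon]\neq\emptyset\}$. This event is reduced to $\{\lambda^{\min}_{x_1,\dots,x_r}\le\delta\}$ exactly as in \eqref{eq-linUB}--\eqref{eq-linLB}. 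You should either restrict to that formulation or supply an actual argument for the miss constraints.
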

We prove this generalization in Section \ref{sec-multiP}. Note that this corollary implies that for every positive numbers $x_1,\ldots,x_r$ which are continuity points of $\mathcal{I}_{\Spec(P_i), 1\le i\le r}^{\beta}$,
$$\limsup_{N\to\infty}\frac{1}{\beta N}\log \P( \|P_i(W_1^{(N)}, \ldots, W_k^{(N)})\|_{\mathrm{op}}\ge x_i, 1\le i\le r)
=-\inf_{\prod [x_i,+\infty)}\mathcal{I}_{\Spec(P_i), 1\le i\le r}^{\beta},$$ 
with inequalities as in \eqref{wldub} and \eqref{wldlb} at general points.
However, this does not allow a priori to estimate the probability that the norm of each polynomial is close to $x_i$. In fact, the argument of monotonicity of the rate function that we use when $r=1$ does not hold a priori when $r>1$. We shall investigate this point in a forthcoming work.

\subsection{General Notation}
We conclude the introduction with a brief overview of the notation and conventions used throughout the paper. The identity matrix of size $N\times N$ is written as $\Id_N$. For a matrix~A, the transpose and Hermitian conjugate are denoted by $A^T$ and $A^*$, respectively. Our default norm is the operator norm $\|\cdot\|_{\rm{op}}$ for matrices and the Euclidean norm $\|\cdot\|_2$ for vectors. The eigenvalues and eigenvectors of a matrix $A\in\R^{N\times N}$ or $A\in\C^{N\times N}$ are denoted as~$\lambda_j(A)$ and~$v_j(A)$ for $j\in\{1,\dots,N\}$, respectively, and eigenvectors are always assumed normalized to~$\|v_j(A)\|_2=1$. 

\medskip
Whenever the matrix $X$ is self-adjoint, we assume that its real eigenvalues are ordered as $\lambda_1(X)\geq\dots\geq \lambda_N(X)$. In particular, $\lambda_1$ always denotes the largest eigenvalue. We further introduce the empirical measure of the eigenvalues of $X$ as
\begin{displaymath}
\hat\mu_X=\frac{1}{N}\sum_{i=1}^N\delta_{\lambda_i(X)}\, ,
\end{displaymath}
and always include the underlying matrix as a subscript. Moreover, $\mathrm{Sym}_N(\R)$ denotes the symmetric matrices in $\R^{N\times N}$. We further add a superscript $\mathrm{Sym}_N^+(\R)\subset\R^{N\times N}$ to denote the symmetric positive semi-definite matrices and further occasionally use the set
\begin{displaymath}
\mathrm{Sym}_N^{+,1}(\R):=\{X\in\mathrm{Sym}_N^+(\R): \Tr(X)=1\}
\end{displaymath}
for matrices in $\mathrm{Sym}_N^+(\R)$ with the trace rescaled to one. Lastly, $\mathrm{Sym}_N(\C)$ denotes the Hermitian matrices in $\C^{N\times N}$, and the sets $\mathrm{Sym}_N^+(\C)$ and $\mathrm{Sym}_N^{+,1}(\C)$ are defined in analogy to the real case. For statements that include both cases, we introduce $\K_\beta$ and use it to denote $\R$ when $\beta=1$ and $\C$ when $\beta=2$.

\section{Preliminaries}

\subsection{Tilting by Spherical Integrals}
For $\beta\in\{1,2\}$,  $\theta\in\R$ and a deterministic self-adjoint matrix $H\in\mathrm{Sym}_N(\K_\beta)$, the spherical integral is defined as
\begin{equation}\label{eq-sphint}
I_{\beta,N}(\theta,H)=\int_{\mathds{S}^{\beta N-1}}\exp(\tfrac{\beta}{2} \theta N\langle u, H u\rangle)\dx u=\E_u[\exp(\tfrac{\beta}{2}\theta N\langle u, H u\rangle)]\, ,
\end{equation}
where $\E_u$ denotes expectation with respect to the uniform law on the $N$-dimensional unit sphere $\mathds{S}^{\beta N -1}$. The spherical integral~\eqref{eq-sphint} is a useful tool, because its behavior in the limit $N\rightarrow\infty$ is well-studied. More precisely, for a suitable sequence $(H_N)_N$ of self-adjoint matrices of increasing size, we have the following convergence result due to~\cite{GuionnetMaida2005,Maida2007}.

\begin{lemma}\label{lem-convsphint}
Let $(H_N)_N$ be a sequence of $N\times N$ self-adjoint deterministic matrices such that $\sup_{N}\|H_N\|_{\rm{op}}<\infty$, the empirical eigenvalue measures $(\hat \mu_{H_N})_N$ converge weakly towards a probability measure $\nu$, and the largest eigenvalues $(\lambda_1(H_N))_N$ converge to some number $y\in\R$. Then, for every $\theta\ge 0$,
\begin{equation}\label{eq-convsphint}
\lim_{N\rightarrow\infty}\frac{1}{\beta N}\ln[I_{\beta,N}(\theta,H_N)]=J_{\nu}(y,\theta)\, ,
\end{equation}
with an explicit function $J_\nu(y,\theta)$ that only depends on $\nu$, its Stieltjes transform, and the arguments $y$ and $\theta$. Moreover, the convergence in~\eqref{eq-convsphint} is uniform on small neighborhoods of $\hat\mu_{H_N}$ (for the weak topology) and $\lambda_1(H_N)$. The same result holds if $\theta$ is negative and the largest eigenvalue is replaced by the smallest eigenvalue.
\end{lemma}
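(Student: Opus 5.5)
This lemma is cited from \cite{GuionnetMaida2005,Maida2007}, so the plan is to recall the structure of their argument rather than build something new. The approach is to represent the uniform vector on the sphere as a normalized Gaussian vector, $u=g/\|g\|_2$ with $g$ a standard Gaussian in $\K_\beta^N$. By unitary (resp.\ orthogonal) invariance we may assume $H_N=\diag(\lambda_1,\dots,\lambda_N)$. Then
\[
\langle u,H_Nu\rangle=\frac{\sum_i\lambda_i|g_i|^2}{\sum_i|g_i|^2},
\]
so the spherical integral becomes an exponential moment of a ratio of two sums of independent variables. We linearize the ratio by introducing an auxiliary parameter $\gamma$ and a constraint $\frac1N\sum_i |g_i|^2\approx 1$, and then perform a Gaussian change of measure. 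In the tilted measure the $g_i$ are independent with variance $(2\gamma-2\theta\lambda_i)^{-1}$ (in the normalization matching $\beta$). The parameter $\gamma$ is chosen so that the tilted constraint is centered, namely
\[
\frac1N\sum_i \frac{1}{2\gamma-2\theta\lambda_i}=1 .
\]

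The key steps, in order, are as follows.

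\textbf{Step 1 (small $\theta$).} When $2\theta \le G_\nu(y)$, where $G_\nu(z)=\int (z-x)^{-1}\,d\nu(x)$ is the Stieltjes transform (inverted via $R_\nu=G_\nu^{-1}-1/z$), the equation for $\gamma$ has a solution with $\gamma>\theta\lambda_1$. Under the tilted law the constraint concentrates by the law of large numbers, and a local CLT-type estimate shows it costs only $e^{o(N)}$. This yields
\[
J_\nu(y,\theta)=\frac{\beta}{2}\int_0^{2\theta/\beta} R_\nu(v)\,dv .
\]
The formula depends only on $\nu$, because the largest eigenvalue does not influence the saddle point in this regime.

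\textbf{Step 2 (large $\theta$).} When $2\theta> G_\nu(y)$, the saddle point hits the edge, so $\gamma$ is frozen at $\theta y$. The missing mass of the constraint is then carried by the coordinate of $g$ along the top eigenvector. We handle this by conditioning on $|g_1|^2$ and optimizing over it, which produces the explicit formula
\[
J_\nu(y,\theta)=\theta y-\tfrac12-\tfrac12\ln(2\theta)-\tfrac12\int\ln(y-x)\,d\nu(x)
\]
(up to the $\beta$-scaling). Two-sided bounds come from restricting the integral to a neighborhood of the optimal $|g_1|^2$ for the lower bound, and from an exponential Chebyshev bound for the upper bound.

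\textbf{Step 3 (uniformity and negative $\theta$).} Continuity in $(\nu,y)$ follows because all estimates involve only $\int f\,d\hat\mu_{H_N}$ for functions $f$ that are continuous on the support, together with $\lambda_1$. The map $H\mapsto \frac1N\ln I_{\beta,N}(\theta,H)$ is Lipschitz in operator norm with constant $\theta$. Since eigenvalues can be rearranged, small weak perturbations of the spectrum with the top eigenvalue controlled give uniform convergence on neighborhoods. Negative $\theta$ follows by replacing $H$ with $-H$.

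I expect the main obstacle to be the transition regime of Step 2. There, the Stieltjes transform $G_{\hat\mu_{H_N}}$ evaluated near $\lambda_1(H_N)$ is not controlled by weak convergence alone, because eigenvalues of $H_N$ close to $\lambda_1$ but separated from it can blow up the finite-$N$ saddle-point equation. I would deal with this by truncating: the eigenvalues within $\delta$ of $y$ are treated separately. Their count is $o(N)$ by weak convergence to $\nu$ when $y$ lies at or beyond the edge of the support of $\nu$, and I would bound their contribution crudely by $\theta(\lambda_1+\delta)$ before letting $\delta\to0$.
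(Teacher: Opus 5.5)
Your proposal takes the same route as the paper, which gives no proof of this lemma and simply cites \cite{GuionnetMaida2005,Maida2007}: your outline (Gaussian representation, saddle-point tilt in the first regime, the saddle point sticking at $y$ with the excess mass carried by the top eigenvector, and an exponential Chebyshev upper bound) is a sound reconstruction of the argument behind those references, and optimizing over the mass $s$ on the top eigenvector does reproduce \eqref{eq-defJ}. A few details need fixing: in the paper's normalization the first regime is $\theta\le G_\nu(y)$ with $J_\nu(y,\theta)=\frac12\int_0^\theta R_\nu(v)\,\mathrm{d}v$; when $y=r_\nu$ the number of eigenvalues within $\delta$ of $y$ is about $N\nu([y-\delta,y])$ rather than $o(N)$, which is still harmless as $\delta\to0$ because $G_\nu(y)<\infty$ rules out an atom at $y$; and uniformity on neighborhoods should be obtained from the sequential statement by a subsequence argument, since the operator-norm Lipschitz bound does not control weak perturbations of the spectrum.
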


Note that prefactor $(\beta N)^{-1}$ in~\eqref{eq-convsphint} matches the speed that we are aiming for in Theorem~\ref{thm-main}. The function $J_\nu$ can be given explicitly, and we recall its definition from~\cite{GuionnetMaida2005,Maida2007} for further use. First, recall that the Stieltjes transform $m_\nu$ of a probability measure $\nu$ is given by
$$
m_\nu(z)=\int\frac{\nu(\dx x)}{x-z}\,.
$$
If $r_\nu$ denotes the rightmost point in the support of $\nu$, $m_\nu$ is strictly increasing on $(r_\nu,+\infty)$ with values in $(\lim_{y\downarrow r_\nu} m_\nu(y),0)$ and we denote 
by $(-m_\nu)^{-1}$ the functional inverse of its negative from $(0,-\lim_{y\downarrow r_\nu} m_\nu(y))$ into $(r_\nu,\infty)$. For $\theta\geq0$, we have
\begin{align}
&J_\nu(x,\theta)\label{eq-defJ}\\
&:=\begin{cases}\frac{1}{2}\left(
\theta (-m_\nu)^{-1}(\theta)-(1+\ln(\theta))-\int_\R\ln\big((-m_\nu)^{-1}(\theta)-y\big)\dx\nu(y)\right), &\text{if }\theta\le-m_\nu({x}),\\
\frac{1}{2}\left(\theta x-(1+\ln(\theta))-\int_\R \ln|x-y|\dx\nu(y)\right),&\text{if }\theta\ge -m_\nu({x}).\\
\end{cases}\NN
\end{align}
To use the spherical integral techniques to prove large deviations, a key point is to compute the average of the spherical integral with respect to the sequence $H_N$ of random matrices under study. This is a difficult question if $H_N=P_N$ except if $P_N$ depends linearly on the matrices $(W_1^{(N)},\ldots,W_k^{(N)})$. This is why we will first consider a suitable linearization of the polynomial.

\subsection{Linearization}
Our key tool to treating polynomials in GOE or GUE matrices is the following ``linearization trick'' introduced in~\cite{HST2006, HaagerupTobjornsen2005}: Given a self-adjoint polynomial $P$ in GOE or GUE matrices $W_1^{(N)},\dots, W_k^{(N)}$, one can find a linearization $L_z^{(N)}$ such that $L_z^{(N)}$ is non-invertible iff~$z$ is in the spectrum of $P(W_1^{(N)},\dots, W_k^{(N)})$. Similar to the notation for the Gaussian Wigner matrices, we drop the explicit $N$-dependence from $L_z^{(N)}$ in the following derivation. First, write the polynomial $P(X_1,\dots,X_k)$ with (deterministic) arguments $X_1,\dots,X_k$ as a matrix product $P = M_1 \dots M_{p}$ where the $M_j$ are matrices of compatible dimensions and whose entries are polynomials of degree $1$ or less. We then form for every $z \in\C$ the linearization $L_z$ as 
\begin{equation}\label{eq-linearization}
L_z := \begin{pmatrix} z &  -M_1 & 0 & \dots & 0 \\
0& \Id & - M_2 & \dots & 0 \\
\vdots & & &\ddots & \\
 -M_{p} & 0& \hdots & & \Id \end{pmatrix}\, .
\end{equation}
The blocks $M_i$ may have diﬀerent sizes $l_i\times r_i$ and $\ell=\sum_{1\le i\le p} l_i=\sum_{1\le i\le p} r_i$ below stands for the total size of the block matrix.
Note that such a linearization is of the form
\begin{equation}\label{eq-defLz}
L_z=\sum_{j=1}^kA_j\otimes X_j+A_0\otimes\Id_N
\end{equation}
with suitable matrices $A_0,\dots,A_k$ in $\R^{\ell\times\ell}$ ($\beta=1$) or in $\C^{\ell\times\ell}$ ($\beta=2$) that, a priori, do not need to be self-adjoint. We then have (see, e.g., Proposition 2.3 in \cite{HST2006}) that $L_z$ is invertible if and only if $(z - M_1\dots M_p) $ is and if it is the case then $L_{z}^{ -1} = B(z) + C $, where

\begin{align}
B(z) &= \begin{pmatrix} \Id \\ M_2 \dots M_p \\ \vdots \\ M_p \end{pmatrix} ( z - M_1 \dots M_p)^{ -1} (\Id \quad M_1 \quad M_1M_2 \quad \dots \quad M_1 M_2\dots  M_{p-1} )\nonumber\\
&=: V_1 (z - P)^{ -1} V_2^T\label{poiu}
\end{align}
and
\begin{displaymath}
C = \begin{pmatrix} 0 & 0 & \hdots & & & &\\
0 & \Id & M_2 & M_2 M_3 & \hdots& & \\
0 & 0 & \Id & M_3 & \hdots & & \\
\vdots &  & &  & & & \\
0 & \hdots & & & & & \Id \end{pmatrix}\, .
\end{displaymath}

In particular, we notice that $(z -P)^{-1}$ is in position $(1,1)$ of $(L_z)^{ -1}$. Hence, once we substitute the matrices $W_1^{(N)},\ldots,W_k^{(N)}$ for $X_1, \dots, X_k$, and denote the resulting matrix as $L_z^{(N)}$, 
we have $$\| (L_z^{(N)})^{-1}\|_{\rm{op}}=\sup_{\|u\|_2=\|v\|_2=1} |\langle v, (L_z^{(N)})^{-1} u\rangle|
\ge \sup_{\|u\|_2=\|v\|_2=1} |\langle v, ((L_z^{(N)})^{-1})_{1,1} u\rangle|= \| (z - P)^{-1}\|_{\rm{op}},$$ where we restricted the supremum to vectors supported on the first coordinate. This implies  that if $\lambda_{N\ell}(L_zL_z^*)$ denotes the smallest eigenvalue of $L_z^{(N)} (L_z^{(N)})^*$, $$\| (L_z^{(N)})^{-1}\|^{-1}_{\rm{op}}=\sqrt{\lambda_{N\ell} (L_z L_z^*)} \leq \| (z - P)^{-1}\|_{\rm{op}}^{-1}= \dist(z, \Spec(P_N))\,,$$
and consequently,
for $\varepsilon\in (0,1)$,
$$\P[ \text{dist}(z,\Spec(P_N)) \leq \epsilon ] \leq \P [ \lambda_{N\ell}(L_zL_z^*) \leq  \epsilon^2 ]\,.$$
Furthermore, for every $M >0$, on the event $\Omega_M=\{\max_j \|W_j^{(N)}\|_{\rm{op}}\le M\}$, we find a finite constant $c_1(M)$ such that we have $\|V_1\|_{\rm{op}}, \|V_2\|_{\rm{op}}, \|C\|_{\rm{op}} \leq c_1(M)$. On the other hand, by concentration  of the eigenvalues of Gaussian matrices, see e.g. \cite[Thm.~6.6.2]{AliceStFlour}, and therefore of their norm, we know that there exists $M_0>0$ such that for $M\ge M_0$
\begin{equation}\label{exptlambda}
\P(\|W_i^{(N)}\|_{\rm{op}}\ge M)\le e^{-\frac{\beta}{8} N M^2}\,.
\end{equation}
Moreover, on $\Omega_M$, we have  by \eqref{poiu}
$$\| (L_z^{(N)})^{-1}\|_{\rm{op}} \leq c_1(M) (\| (z - P)^{-1}\|_{\rm{op}} + 1),$$ i.e. we find on $\Omega_M$ that if
$c_1(M) \sqrt{ \lambda_{N\ell} (L_zL_z^*)}<1$,

\begin{displaymath}
\dist(z, \Spec(P_N)) \leq  \Big( \frac{1}{c_1(M) \sqrt{ \lambda_{N\ell} (L_zL_z^*)}} -1 \Big)^{ -1}\, . 
\end{displaymath}
In short, for $M >M_0$ and $\delta\in [0,c_1(M)^{-2}[$,
we get
$$\P [ \lambda_{N\ell}(L_z L_z^*) \leq \delta ] \le  e^{- \frac{\beta}{8} N M^2} +  \P\Bigl[ \text{dist}(z,\Spec(P_N)) \le \frac{c_1(M) \sqrt{\delta}}{1-c_1(M)\sqrt{\delta}}\Bigr]\, .$$
Thus, we  can find two positive functions $\delta_0(\varepsilon)=(c_1(M)^{-1} \varepsilon(1+\varepsilon)^{-1})^{2},\delta(\varepsilon)=\varepsilon^2$ such that
\begin{align*}
\P [ \lambda_{N\ell}(L_zL_z^*) \leq \delta_0( \epsilon) ] - e^{- \frac{\beta}{8} N M^2} &\leq  \P[ \text{dist}(z,\Spec(P_N))\leq \epsilon ]\\
& \leq \P [ \lambda_{N\ell}(L_zL_z^*) \leq \delta( \epsilon) ]\, .
\end{align*}

Therefore, to understand the asymptotics of $\P[ \dist(z, \Spec(P_N)) \leq \epsilon ]$, we only need to consider $\P [ \lambda_{N\ell}(L_zL_z^*) \leq \delta( \epsilon) ]$. We have that
\begin{align}
\lim_{\epsilon \to 0} \limsup_N \frac{1}{N} \ln \P [ \dist(z, \Spec(P_N)) \leq \epsilon ]  &= \lim_{\epsilon \to 0} \limsup_N \frac{1}{N} \ln \P [ \lambda_{N\ell}(L_zL_z^*) \leq \epsilon ]\label{eq-linUB}\\
\lim_{\epsilon \to 0} \liminf_N \frac{1}{N} \ln \P [ \dist(z, \Spec(P_N)) \leq \epsilon ]  &= \lim_{\epsilon \to 0} \liminf_N \frac{1}{N} \ln \P [ \lambda_{N\ell}(L_zL_z^*) \leq \epsilon ]\label{eq-linLB}.
\end{align}

\medskip
In view of~\eqref{eq-linUB} and~\eqref{eq-linLB}, a large deviation principle for the spectrum and the  largest eigenvalue of $P_N$ may be deduced from estimating the probability that $\lambda_{N\ell}(L_z L_z^*)$ is small for every $z\in \R$. This is detailed in Section \ref{Sec:proofthm}.
Therefore, we aim to prove the following:
\begin{theorem}\label{thm-Lbounds}
Under Assumption \ref{assum-main}, we have for every $z\in\R$,
$$
\lim_{\epsilon\rightarrow0}\liminf_{N\rightarrow\infty}\frac{1}{\beta N}\ln\P[\lambda_{N\ell}(L_zL_z^*)\leq\epsilon]
=\lim_{\epsilon\rightarrow0}\limsup_{N\rightarrow\infty}\frac{1}{ \beta N}\ln\P[\lambda_{N\ell}(L_zL_z^*)\leq\epsilon]=-\mathcal{I}_{L_z}^{\beta},
$$
with
$$-\mathcal{I}_{L_z}^{\beta}:=
\lim_{\theta\to+\infty}\sup_{\Psi\in \mathrm{Sym}^{+,1}_\ell(\K_\beta)}[F^{(\beta)}(\theta,\Psi)-\widetilde{\mathcal{I}}(\Psi)-\ell J_{\nu_z}(\theta)]\, .$$
where $F^{(\beta)}$ is defined in~\eqref{eq-defF} below for $\beta=1$ and in~\eqref{eq-defF-GUE} for $\beta=2$ and depends on the matrices $A_0,\dots,A_k$ in~\eqref{eq-defLz}. Moreover, $\widetilde{\mathcal{I}}$ is the rate function in~\cite[Lem.~3.1]{LDP_Kronecker} (cf. Lemma~\ref{lem-WishartLDP} below), and $J_{\nu_z}(\theta)$ is the limit of the spherical integral in~\eqref{eq-convsphint} evaluated at {$-\theta$}, the appropriate limiting measure $\nu_z$ of the empirical distribution of the eigenvalues of $L_z^{(N)}(L_z^{(N)})^*$ and the smallest eigenvalue $0$, namely $J_{\nu_z}(0,-\theta)$.
\end{theorem}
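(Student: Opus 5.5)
We plan to follow the tilting-by-spherical-integrals strategy of \cite{GuionnetHusson2020}, adapted as in \cite{LDP_Kronecker,HussonMcKenna2023} to the block structure of the linearization. Set $H_N:=-L_zL_z^*$, so that the event $\{\lambda_{N\ell}(L_zL_z^*)\le\epsilon\}$ becomes $\{\lambda_1(H_N)\ge-\epsilon\}$. We tilt the law of $(W_1,\dots,W_k)$ by the spherical integral $I_{\beta,N\ell}(\theta,H_N)$ in dimension $N\ell$. The argument has three steps.

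\textbf{Step 1: the annealed computation.} We must compute
\[
\E_W\big[I_{\beta,N\ell}(\theta,H_N)\big]=\E_u\E_W\big[\exp\big(-\tfrac{\beta}{2}\theta N\ell\,\|L_z^*u\|^2\big)\big].
\]
Write $u\in\mathds{S}^{\beta N\ell-1}$ as $u=(u_1,\dots,u_\ell)$ with $u_i\in\K_\beta^N$. Then $L_z^*u=\sum_jA_j^*\otimes W_j\,u+A_0^*\otimes \Id\,u$ is, conditionally on $u$, a Gaussian vector in the entries of the $W_j$. Its mean and covariance depend only on the $\ell\times\ell$ Gram matrix $\Psi=(\langle u_i,u_j\rangle)_{i,j}\in\mathrm{Sym}^{+,1}_\ell(\K_\beta)$, up to corrections of order $1/N$ coming from the symmetry constraint of GOE/GUE. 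The Gaussian integral is then an explicit determinant-type expression. Its leading order in $N$ should be $\exp(\beta N F^{(\beta)}(\theta,\Psi))$, with $F^{(\beta)}$ as in \eqref{eq-defF}. The remaining average over $u$ reduces to an average over $\Psi$, whose law satisfies an LDP with rate $\widetilde{\mathcal I}$ by Lemma~\ref{lem-WishartLDP}. Varadhan's lemma then gives
\[
\frac{1}{\beta N}\ln\E_W[I]\to\sup_\Psi\big[F^{(\beta)}(\theta,\Psi)-\widetilde{\mathcal I}(\Psi)\big].
\]
We need some care here because $F$ is only finite for $\Psi$ on which the quadratic form is integrable. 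This is automatic, since the exponent is negative.

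\textbf{Step 2: upper bound.} On the event $\{\lambda_{N\ell}\le\epsilon,\ \max_j\|W_j\|\le M,\ \hat\mu_{L_zL_z^*}\approx\nu_z\}$, Lemma~\ref{lem-convsphint}, applied at $-\theta$ with smallest eigenvalue near $0$, gives $I\ge e^{\beta N\ell(J_{\nu_z}(0,-\theta)-o(1))}$. The factor $\ell$ arises from the dimension and normalization. The empirical measure concentrates at speed $N^2$, and the norms are controlled by \eqref{exptlambda}. Chebyshev's inequality then yields
\[
\limsup\frac{1}{\beta N}\ln\P\le \sup_\Psi\big[F-\widetilde{\mathcal I}\big]-\ell J_{\nu_z}(\theta)
\]
for every $\theta$. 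Letting $\theta\to\infty$ gives the bound; we will check that the limit exists, for instance by monotonicity in $\theta$.

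\textbf{Step 3: lower bound, the main obstacle.} Under the tilted measure $\P^\theta$, we must show that $\lambda_{N\ell}(L_zL_z^*)\le\epsilon$ with probability not exponentially small, for $\theta$ large. Then
\[
\P(\cdot)\ge \E^\theta\big[\I_{\cdot}\,I^{-1}\big]\,\E[I]
\]
gives the matching bound, using the uniform convergence in Lemma~\ref{lem-convsphint} to replace $I$ by $e^{\beta N\ell J}$. To do this, we would describe the tilted law jointly in $(W,u)$: conditionally on $u$ it is Gaussian, and $u$ concentrates near the maximizers $\Psi^*_\theta$. Under it, $\|L_z^*u\|^2$ is of order $1/\theta$, hence small for large $\theta$, and this forces $\lambda_{N\ell}$ to be small.

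The difficulty, as in \cite{GuionnetHusson2020}, is that the maximizer $\Psi^*$ might be non-unique. In addition, we need the limit $\theta\to\infty$ to be taken along maximizers realizing the supremum. We would handle non-uniqueness by perturbing with an additional tilt, or by exploiting that only an $\epsilon$-small eigenvalue is needed rather than a fixed location. Large $\theta$ pushes the eigenvalue to $0$ regardless of which maximizer is selected, which is why the rate is defined as a $\theta\to\infty$ limit.
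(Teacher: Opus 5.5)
Your upper bound (Steps 1--2) follows the paper's route, but Step~3 has a genuine gap. You assert that under the tilt $\|L_z^*u\|^2$ is of order $1/\theta$. That is justified only for the centered part of $L_z^*u$, i.e.\ when $A_0=0$, whereas for the linearization $A_0$ always contains $z$ and the identity blocks. By Lemma~\ref{lem-tiltedGaussian}, conditionally on $u$ the tilted mean of $L_z^*u$ is $(\Id+\theta\ell\Sigma)^{-1}\xi$. This tends to the component of $\xi$ in $\ker\Sigma$ and need not vanish.

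Worse, suppose $\sigma_0:=\min_{M_j}\sigma_{\min}\bigl(\sum_jA_j\otimes M_j+A_0\otimes\Id_N\bigr)^2>0$, as happens for $P=X^2$, $z<0$. Then $\|L_z^*u\|^2\ge\sigma_0$ deterministically, the event is empty for $\epsilon<\sigma_0$, and what you must prove is that the formula for $-\mathcal{I}_{L_z}^\beta$ equals $-\infty$.

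The missing ingredients are the following.
\begin{itemize}
\item $\sigma_0$ does not depend on $N$: compress to the span of $u_1,\dots,u_\ell$.
\item If $\sigma_0>0$, then $\widetilde F(\theta):=\sup_\Psi[F^{(\beta)}(\theta,\Psi)-\widetilde{\mathcal I}(\Psi)]\le-\frac{\theta}{2}\ell\sigma_0$, while $-\ell J_{\nu_z}(\theta)$ grows only like $\ln\theta$.
\item If $\sigma_0=0$, you need a genuinely new lower bound $\widetilde F(\theta)=o(\theta)$ on the annealed integral. It is obtained by localizing on the event that the top-left $\ell\times\ell$ corners of the $W_j$ are near an $\ell$-dimensional near-minimizer, the $(N-\ell)\times\ell$ off-diagonal blocks are small, and $u$ is near the embedded minimizing vector. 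Each of these costs $e^{-O(N)}$, uniformly in $\theta$.
\end{itemize}
Given this, use convexity of $\theta\mapsto\ln\E[I(-\theta)]$, whose derivative is $-\frac{\beta N\ell}{2}\E^{(\theta)}\|L_z^*u\|^2$. It yields $\limsup_N\E^{(\theta)}\|L_z^*u\|^2\le\frac{2}{\ell}\bigl(\widetilde F(\theta-1)-\widetilde F(\theta)\bigr)\to0$, and Markov's inequality concludes.

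Non-uniqueness of maximizers is not the real obstacle. Since $\widetilde{\mathcal I}\ge0$ and the log-determinant term of $F^{(\beta)}$ is nonpositive, every near-maximizer has a small tilted mean as soon as $\widetilde F(\theta)=o(\theta)$ is known. Without that bound, no analysis of maximizers helps.

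There is a second gap in the denominator. You replace $I$ by $e^{\beta N\ell J_{\nu_z}(\theta)}$ on $\{\lambda_{N\ell}\le\epsilon\}$ via Lemma~\ref{lem-convsphint} plus continuity of $J$ in the bottom eigenvalue. This costs an error of order $\theta\epsilon$. That error does not vanish, because $\theta$ must be large at fixed $\epsilon$: already for $A_0=0$, the Markov bound $\P^{(\theta)}(\lambda_{N\ell}>\epsilon)\le 1/(\epsilon\theta)$ requires $\epsilon\theta\gtrsim1$. You need a one-sided bound. Let $H_z$ be $L_zL_z^*$ with its smallest eigenvalue replaced by $0$. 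Then $L_zL_z^*\ge H_z$, hence $I(-\theta,L_zL_z^*)\le I(-\theta,H_z)\le e^{\beta N(\ell J_{\nu_z}(\theta)+o(1))}$, with $o(1)$ independent of $\epsilon$. Equivalently, you can use monotonicity of $J_{\nu_z}(\cdot,-\theta)$ in the bottom eigenvalue.

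Finally, Step~1 contains an error. The rank-$\ell^2$ part of the GOE covariance (the pseudo-covariance for the GUE) is not an $O(1/N)$ correction where it matters. It acts on $\mathrm{span}\{e_a\otimes u_b\}$, which contains the mean $\xi$, so it enters $Q_\theta$ at leading order through $\mathcal{R}_\Psi$. Only its log-determinant contribution is $O(1)$. Neglecting it, as your heuristic suggests, produces the wrong $F^{(\beta)}$.
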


The proof of Theorem~\ref{thm-Lbounds} uses the technique of tilting by spherical integrals and is split into an upper and a lower bound. We show these bounds for the case $\beta=1$ in Sections~\ref{sect-UB} and~\ref{sect-LB}, respectively, and discuss the necessary modifications for $\beta=2$ in Section~\ref{sect-betas}. Before diving deeper into the argument, however, we note the following properties of $L_z^{(N)}(L_z^{(N)})^*$.

\begin{lemma}\label{lem-prelimbounds}
Let $L_z^{(N)}$ be defined as in~\eqref{eq-linearization} above  with $(X_1,\ldots,X_k)=(W_1^{(N)},\ldots,W_k^{(N)})$ and denote by $\hat \mu_{L_z^{(N)}(L_z^{(N)})^*}$ the empirical spectral measure of $L_z^{(N)}(L_z^{(N)})^*$. It holds that
\begin{itemize}
\item[(i)] There exists a deterministic probability measure $\nu_z$ such that $\hat \mu_{L_z^{(N)}(L_z^{(N)})^*}\rightarrow\nu_z$ weakly in probability.
\item[(ii)] For every $\delta>0$,
\begin{displaymath}
\lim_{N\rightarrow\infty}\P(|\lambda_{N\ell}( L_z^{(N)}(L_z^{(N)})^*)-\ell_z |\geq\delta)=0, \lim_{N\rightarrow\infty}\P(|\lambda_{1}( L_z^{(N)}(L_z^{(N)})^*)-r_z |\geq\delta)=0\, ,
\end{displaymath}
where $\ell_z=\min\supp\nu_z$ and $r_z=\max\supp\nu_z$.
\item[(iii)] The sequence $(\|L_z^{(N)}(L_z^{(N)})^*\|_{\rm{op}})_N$ is exponentially tight. More precisely, there exists a constant $c>0$ such that for every $M>0$ it holds that
\begin{displaymath}
\limsup_{N\to\infty}\frac{1}{N}\ln\P(\|L_z^{(N)}(L_z^{(N)})^*\|_{\rm{op}}>M)\leq c-\frac{\ell\sqrt{M}}{4}\,,
\end{displaymath}
for a finite constant $c$ depending only on the linearization (as well as $\ell$).
\end{itemize}
\end{lemma}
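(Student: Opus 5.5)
We prove the three items separately, relying on free probability for (i) and (ii) and on Gaussian concentration for (iii).

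\textbf{Item (i).} Write $L_z^{(N)}=\sum_{j=1}^kA_j\otimes W_j^{(N)}+A_0\otimes\Id_N$, which is a linear matrix pencil in GOE/GUE matrices. Then
\[
L_z^{(N)}(L_z^{(N)})^*=Q(W_1^{(N)},\dots,W_k^{(N)})
\]
is a self-adjoint polynomial of degree two with matrix coefficients in $M_\ell(\K_\beta)$, i.e.\ a $*$-polynomial with coefficients $A_iA_j^*$. The $W_j^{(N)}$ are asymptotically free from the constant matrices $M_\ell(\K_\beta)\otimes\Id_N$. Hence Voiculescu's asymptotic freeness, in its matrix-valued form (or simply via convergence of moments $\frac{1}{N\ell}\Tr\big[(L_zL_z^*)^m\big]$ together with Gaussian concentration of these traces), gives almost sure convergence of all moments of $\hat\mu_{L_zL_z^*}$. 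The limits are the moments of $\nu_z$, the distribution under $\tr_\ell\otimes\tau$ of
\[
\Big(\sum_j A_j\otimes S_j+A_0\otimes 1\Big)\Big(\sum_j A_j\otimes S_j+A_0\otimes1\Big)^*,
\]
where $S_1,\dots,S_k$ are free semicirculars. Since $\nu_z$ is compactly supported it is determined by its moments, and weak convergence in probability follows.

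\textbf{Item (ii).} Here we need the strong convergence result of Haagerup--Thorbj{\o}rnsen \cite{HaagerupTobjornsen2005} and its real version \cite{HST2006}, whose main theorem holds for polynomials with matrix coefficients $M_\ell(\C)\otimes\cdot$. It states that, almost surely, for every $\eps>0$ and $N$ large, $\Spec(Q(W^{(N)}))$ lies in the $\eps$-neighborhood of $\Spec(Q(S))$. Because $Q(S)$ acts on a $C^*$-algebra with faithful trace $\tr_\ell\otimes\tau$, we have $\Spec(Q(S))=\supp\nu_z$. This yields $\lambda_1\le r_z+\delta$ and $\lambda_{N\ell}\ge \ell_z-\delta$ with high probability. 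The reverse inequalities come from (i): weak convergence to $\nu_z$ forces some eigenvalues near $r_z$ and near $\ell_z$, because $\nu_z$ gives positive mass to $[r_z-\delta,r_z]$ and to $[\ell_z,\ell_z+\delta]$.

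The main obstacle is making sure strong convergence is available here, since $Q$ is quadratic rather than linear. It would be cleaner to apply strong convergence to the self-adjoint linear pencil
\[
\begin{pmatrix}0&L_z\\ L_z^*&0\end{pmatrix}.
\]
Its spectrum is $\pm$ the singular values of $L_z$, so the edges of $L_zL_z^*$ are squares of its extreme points in absolute value. The largest singular value is then controlled directly. The smallest singular value requires the gap of this pencil around $0$, which strong convergence also controls, since it gives Hausdorff convergence of spectra, including gaps.

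\textbf{Item (iii).} We use the bound
\[
\|L_zL_z^*\|_{\rm op}=\|L_z\|_{\rm op}^2\le \Big(\|A_0\|+\sum_j\|A_j\|\,\|W_j^{(N)}\|_{\rm op}\Big)^2 .
\]
If $\|L_zL_z^*\|>M$, then some $\|W_j\|_{\rm op}\ge(\sqrt M-\|A_0\|)/(k\max_j\|A_j\|)$. Applying \eqref{exptlambda} to that $j$ and taking a union bound over $j$ gives an exponent of order $-cM$, which beats $-\ell\sqrt M/4$ for large $M$. For bounded $M$, the claimed bound holds trivially once the constant $c$ is chosen large enough, since probabilities are at most $1$.
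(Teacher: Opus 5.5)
Your proof is correct, but it is built from different ingredients than the paper's. The paper treats all three items the same way. It observes that the Hermitian symmetrization $\mathcal{L}_z^{(N)}$ of \eqref{eq-defsymm} is a Gaussian Kronecker matrix and then cites results for that class. For (i) it uses the concentration of block empirical measures \cite[Thm.~4.4]{LDP_Kronecker}, the same input that yields the superexponential bound of Lemma~\ref{lem-conc}. For (ii) it uses the edge and spectrum localization results \cite[Lem.~1.2]{LDP_Kronecker} and \cite[Thm.~4.7]{AEKN2019}, and for (iii) it uses \cite[Lem.~4.5]{LDP_Kronecker}.

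Your route differs item by item:
\begin{itemize}
\item[(i)] You use moment convergence from asymptotic freeness. This identifies $\nu_z$ explicitly as the law of $L(S)L(S)^*$ under $\mathrm{tr}_\ell\otimes\tau$ and suffices for (i). It does not, however, give Lemma~\ref{lem-conc} at the same time, as the paper's citation does.
\item[(ii)] You use the same symmetrization, but your key input is Haagerup--Thorbj{\o}rnsen strong convergence. For the GOE the relevant reference is \cite{Schultz2005}.
\item[(iii)] Your triangle inequality combined with \eqref{exptlambda} is elementary and self-contained. It even gives an exponent of order $-M$, which is stronger than the stated $c-\ell\sqrt{M}/4$.
\end{itemize}

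Two minor points. First, your worry that $Q$ is quadratic is unfounded: strong convergence holds for arbitrary self-adjoint polynomials with matrix coefficients. Second, when you pass to the pencil you should record why its limiting spectrum is $\{\pm\sqrt t:\ t\in\supp\nu_z\}$, so that its gap at $0$ is exactly $(-\sqrt{\ell_z},\sqrt{\ell_z})$. For instance, by traciality the law of the limiting pencil under the faithful trace is the symmetrized image of $\nu_z$ under $t\mapsto\sqrt t$.
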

In the case where the matrices $A_0,\dots,A_k$ are self-adjoint, $L_z^{(N)}$ matches the definition of the Gaussian Kronecker matrix in~\cite{LDP_Kronecker}, and Lemma~\ref{lem-prelimbounds} is immediate from the discussion of the model in~\cite[Sect.~1.1]{LDP_Kronecker}, as well as Lemmas~1.2 and~4.5 there. However, we do not insist on a self-adjoint linearization here.

\medskip
We further need the following concentration bound.
\begin{lemma}\label{lem-conc} With the same notations as in Lemma \ref{lem-prelimbounds}, for all $\epsilon>0$, it holds that
\begin{displaymath}
\limsup_{N\to\infty}\frac{1}{N}\ln\P(d_{sq}(\hat \mu_{L_z^{(N)}(L_z^{(N)})^*},\nu_z)\geq \epsilon)=-\infty,
\end{displaymath}
where $d_{sq}$ is defined as
\begin{displaymath}
d_{sq}(\nu_1,\nu_2):=\sup_{f\in\mathds{L}}\int_{\R^+} f(x)(\nu_1(\dx x)-\nu_2(\dx x))
\end{displaymath}
for two probability measures $\nu_1,\nu_2$ supported on $\R^+$ and test functions in
\begin{equation}\label{eq-defdsq}
\mathds{L}:=\{f\text{ Lipschitz on }\R:\|f\|_\infty\leq 1, g:x\mapsto f(x^2) \text{ Lipschitz with }\mathrm{Lip}(g)\leq1\}
\end{equation}
where $\mathrm{Lip}(g)$ denotes the Lipschitz constant associated to a Lipschitz function $g$.

In particular, there exists a positive sequence $(\kappa_n)_{n \geq 0}$ going to $0$ at $\infty$ such that 
\begin{displaymath}
\limsup_{N\to\infty}\frac{1}{N}\ln\P(d_{sq}(\hat \mu_{L_z^{(N)}(L_z^{(N)})^*},\nu_z)\geq \kappa_N)=-\infty\, .
\end{displaymath}
\end{lemma}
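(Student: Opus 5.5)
We prove Lemma~\ref{lem-conc} by Gaussian concentration for Lipschitz functions of the Gaussian entries, applied to the singular values of $L_z^{(N)}$. Put $L_z^{(N)}=\sum_j A_j\otimes W_j+A_0\otimes \Id_N$ and view it as an affine function of the vector $X\in\R^{kN(N+1)/2}$ of standardized independent entries $\sqrt N (W_j)_{ab}$, $a\le b$. For $f\in\mathds{L}$ put $g(x)=f(x^2)$. Since the eigenvalues of $L_zL_z^*$ are the squared singular values $s_i$ of $L_z$,
\begin{displaymath}
\int f\,\dx\hat\mu_{L_zL_z^*}=\frac{1}{N\ell}\sum_{i=1}^{N\ell} g(s_i(L_z)).
\end{displaymath}
The map $M\mapsto \frac{1}{N\ell}\sum_i g(s_i(M))$ is Lipschitz for the Hilbert--Schmidt norm with constant $(N\ell)^{-1/2}\mathrm{Lip}(g)$, by the Hoffman--Wielandt inequality for singular values and Cauchy--Schwarz. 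The map $X\mapsto L_z$ has Hilbert--Schmidt Lipschitz constant at most $C_A N^{-1/2}$, with $C_A$ depending only on the $A_j$ (at most $\sqrt2\max_j\|A_j\|$ times a factor accounting for symmetry). Hence $\Phi_f(X):=\int f\,\dx\hat\mu_{L_zL_z^*}$ is Lipschitz in $X$ with constant $C/N$.

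The Gaussian concentration inequality then gives, for every $t>0$,
\begin{displaymath}
\P(|\Phi_f-\E\Phi_f|\ge t)\le 2e^{-cN^2t^2}.
\end{displaymath}
This is speed $N^2$, which is much faster than speed $N$. The property that $g$ is Lipschitz is exactly what the definition~\eqref{eq-defdsq} of $\mathds{L}$ supplies; $f$ itself is not Lipschitz in the eigenvalues in a useful uniform way.

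Next we pass from a single $f$ to the supremum over $\mathds{L}$, using exponential tightness. Fix $M$ large. By Lemma~\ref{lem-prelimbounds}(iii), $\P(\|L_zL_z^*\|_{\rm{op}}>M)\le e^{N(c-\ell\sqrt M/4)}$, which can be made smaller than $e^{-NK}$ for any $K$. On the complement all measures live on $[0,M]$. The set of restrictions to $[0,M]$ of functions in $\mathds{L}$ is uniformly bounded and uniformly Lipschitz in $\sqrt{x}$, so by Arzel\`a--Ascoli it admits a finite $\epsilon/4$-net $f_1,\dots,f_m$ in sup norm, with $m=m(\epsilon,M)$. The same holds if we keep a Lipschitz cutoff and control the mass of $\nu_z$ outside $[0,M]$, which is zero for large $M$ since $\nu_z$ is compactly supported by Lemma~\ref{lem-prelimbounds}(ii). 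A union bound over the net gives
\begin{displaymath}
\P\Big(\sup_{f\in\mathds{L}}|\Phi_f-\E\Phi_f|\ge \epsilon/2\Big)\le m\,e^{-cN^2\epsilon^2/16}+e^{-NK}.
\end{displaymath}
Because the expectation is affected by the large-norm event, we use a truncated expectation or $\E[\Phi_f\I_{\Omega}]$. The correction is $O(e^{-NK})$ since $|f|\le 1$.

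It remains to show $\sup_{f\in\mathds{L}}|\E\Phi_f-\int f\,\dx\nu_z|\to0$. By Lemma~\ref{lem-prelimbounds}(i), $\hat\mu\to\nu_z$ weakly in probability. Bounded convergence gives $\E\Phi_f\to\int f\,\dx\nu_z$ for each fixed $f$, and uniformity over $\mathds{L}$ follows from the same finite net on $[0,M]$ together with tightness. Combining these, for $N$ large the probability in the statement is at most $m e^{-cN^2\epsilon^2}+e^{-NK}$. Taking $\frac1N\ln$ and then letting $K\to\infty$ yields $-\infty$.

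For the sequence $\kappa_N$, take $\epsilon_j\downarrow0$ and choose $N_j$ increasing such that for $N\ge N_j$ both $d_{sq}(\E\text{-version},\nu_z)<\epsilon_j/2$ and $\frac1N\ln\P(d_{sq}\ge\epsilon_j)\le -j$. Then set $\kappa_N=\epsilon_j$ for $N_j\le N<N_{j+1}$; this is a standard diagonal argument.

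The main obstacle is the uniformity over $\mathds{L}$. This requires combining the finite net, which depends on $M$, with exponential tightness at arbitrary speed $K$, and controlling the deterministic bias $\E\Phi_f-\int f\,\dx\nu_z$ uniformly. The Lipschitz estimate itself is routine. For the non-self-adjoint $A_j$, working with singular values is what makes Hoffman--Wielandt applicable.
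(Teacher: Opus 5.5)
Your argument is correct, but it takes a different route from the paper. The paper does no concentration estimate of its own. It passes to the self-adjoint symmetrization $\mathcal{L}_z^{(N)}$ of \eqref{eq-defsymm}, which is a Gaussian Kronecker matrix, and quotes \cite[Thm.~4.4]{LDP_Kronecker} for the superexponential concentration, in Kantorovich--Rubinstein distance, of the block measures $\mu_H(\Pi_i,\Pi_j)$ around deterministic limits. It then transfers this through $\int f\,\dx\hat\mu_{L_zL_z^*}=\int f(x^2)\,\dx\hat\mu_{\mathcal{L}_z}$, which is exactly why $\mathds{L}$ is defined through $x\mapsto f(x^2)$.

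You prove the concentration by hand. The Hoffman--Wielandt (Mirsky) inequality for singular values is the same mechanism as the symmetrization, since the singular values of $L_z$ are the nonnegative eigenvalues of $\mathcal{L}_z$. It makes $\Phi_f$ globally $O(1/N)$-Lipschitz in the Gaussian entries, which gives speed-$N^2$ concentration for each fixed $f$. The speed-$N$ statement for the supremum over $\mathds{L}$ then comes only from the norm tail, through your Arzel\`a--Ascoli net. This is elementary, self-contained, and gives more than is needed. Two cosmetic points. Since $\Phi_f$ is globally Lipschitz, no truncated expectation is needed. Also take an $\epsilon/8$-net drawn from $\mathds{L}$ itself, so that the concentration bound applies to its elements and the constants close.

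What the black box buys the paper is the deterministic limit. \cite[Thm.~4.4]{LDP_Kronecker} produces $\nu_z$ together with the concentration, and in the appendix Lemma~\ref{lem-prelimbounds}(i) is deduced from the proof of the present lemma. Your bias step invokes Lemma~\ref{lem-prelimbounds}(i), so it is circular relative to the paper's logical order, and you should source $\nu_z$ independently.

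This is easy to do. Let $s_1,\dots,s_k$ be free semicirculars and $\mathbf{L}=\sum_jA_j\otimes s_j+A_0\otimes 1$. By asymptotic freeness of independent GOE/GUE matrices, $\E\frac{1}{N\ell}\Tr((L_zL_z^*)^p)\to(\mathrm{tr}_\ell\otimes\tau)((\mathbf{L}\mathbf{L}^*)^p)$ for every $p$. The limit is the moment sequence of a compactly supported measure $\nu_z$, so $\E\hat\mu_{L_zL_z^*}\to\nu_z$ weakly, which is all your bias step needs. Your concentration bound then upgrades this to convergence in probability. Compactness of $\supp\nu_z$ also comes for free, so you do not need Lemma~\ref{lem-prelimbounds}(ii) either.
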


The proof of Lemmas~\ref{lem-prelimbounds} and~\ref{lem-conc} readily follows from the existing literature. For the convenience of the reader, we include a brief proof in Appendix~\ref{sect-app}. We now turn to the proof of Theorem~\ref{thm-Lbounds}.

\section{Proof of Theorem~\ref{thm-Lbounds}: Upper Bound}\label{sect-UB}
We restrict the following argument to $\beta=1$ and discuss the modifications for $\beta=2$ in Section~\ref{sect-betas}. With $\beta$ fixed, we drop the parameter to simplify notation. Choosing a positive sequence $( \kappa_n)_{n \geq 0} $ given by Lemma \ref{lem-conc}, start by defining the set 
\begin{align*}
\Omega_{\alpha,\kappa}:=\{H:d_{sq}(\mu_{HH^*},\nu_z)< \kappa_N \}\cap\{H:\ \|HH^*\|_{\rm{op}}\leq\alpha\}\, .
\end{align*}
Note that the probability that $L^{(N)}_z$ belongs to $\Omega_{\alpha,\kappa}$ is overwhelming by Lemmas~\ref{lem-prelimbounds}(iii) and~\ref{lem-conc}. Let us also abbreviate
\begin{equation}\label{eq-defOm}
\Omega_{\eps,\alpha,\kappa}:=\{H:\ \lambda_{N\ell}(HH^*)\leq\epsilon\}\cap\Omega_{\alpha,\kappa}
\end{equation}
for the following argument. Next, fix $\theta>0$. We apply the tilt and use Lemma \ref{lem-convsphint} to find
\begin{align}
\frac{1}{N}\ln\P(L_z^{(N)}\in \Omega_{\epsilon,\alpha,\kappa})	&= \frac{1}{N} \ln \E[ \mathds{1}_{\{L_z^{(N)}\in\Omega_{\epsilon,\alpha,\kappa}\}} \frac{I_{N\ell}(-\theta,L_z^{(N)}(L_z^{(N)})^*)}{I_{N\ell}(-\theta,L_z^{(N)}(L_z^{(N)})^*)}  ] \nonumber\\
&\leq \frac{1}{N} \ln \E_W[ \mathds{1}_{\{L_z^{(N)}\in \Omega_{\epsilon,\alpha,\kappa}\}}I_{N\ell}(-\theta,L_z^{(N)}(L_z^{(N)})^*)  ]  - \ell J_{\nu_z}(\theta) +o(1)\nonumber\\
&\leq \frac{1}{N} \ln \E_W[I_{N\ell}(-\theta,L_z^{(N)}(L_z^{(N)})^*)] - \ell J_{\nu_z}(\theta) +o(1)\, ,\label{eq-ub1}
\end{align}
which leaves us with computing the limit of the annealed spherical integral. Here, the $o(1)$ error goes to zero as $N$ goes to infinity and then $\epsilon$ goes to zero while $\theta$ remains fixed. Noting that
\begin{displaymath}
\E_W[I_{N\ell}(-\theta,L_z^{(N)}(L_z^{(N)})^*)]=\E_u\E_W[\exp(-\tfrac{\theta}{2} N\ell\|L_z^{(N)}u\|^2)]
\end{displaymath}
by Fubini's theorem, the first step is considering the inner integral. 

\medskip
We start by introducing the following notation. For $u\in\mathds{S}^{\ell N-1}$, write $u=(u_1,\dots,u_\ell)$ with blocks $u_1,\dots,u_\ell$ of length $N$. Similar to~\cite{LDP_Kronecker}, we define the matrix $\rho(u)$ as
\begin{displaymath}
\rho(u)_{ij}=\langle u_i,u_j\rangle, \quad i,j=1,\dots,\ell.
\end{displaymath}
Note that this matrix is positive semi-definite by definition and that $\Tr(\rho(u))=\|u\|^2=1$. It turns out that $\rho(u)$ is the key to computing $\E_W[\exp(-\frac{1}{2}\theta N\ell\|L_z^{(N)}u\|^2)]$ at fixed $u$.

\begin{lemma}\label{lem-annealedcalc}
Under Assumption~\ref{assum-main}, it holds that
\begin{displaymath}
\E_W[\exp(-\tfrac{\theta}{2}N\ell\|L_z^{(N)}u\|^2)]=\exp(f_N(\theta,\rho(u)))
\end{displaymath}
for a suitable continuous function $f_N$ given by
\begin{equation}\label{eq-deffN}
f_N(\theta,\Psi)=-\frac{N\ell\theta}{2}\,Q_\theta(\Psi)-\frac{N}{2}\ln(\det(B))-\frac{1}{2}\ln\det\big(\Id_{\ell^2}+\theta\ell\,B^{-1}\mathcal{R}_\Psi\big),
\end{equation}
where
\begin{equation}\label{eq-defB}
C_\Psi:=\sum_{j=1}^k A_j \Psi A_j^T,\qquad B:= \Id_{\ell} + \theta\ell\, C_\Psi ,
\end{equation}
and where, for $\Psi\in\mathrm{Sym}^{+}_\ell(\R)$, we consider the linear maps on $\R^{\ell\times\ell}$ given by
\begin{align*}
\mathcal{R}_\Psi(Z):=\sum_{j=1}^k A_j\Psi^{1/2}Z^TA_j\Psi^{1/2}\, ,\quad \mathcal{K}_\Psi(Z):=C_\Psi Z+\mathcal{R}_\Psi(Z)\, ,
\end{align*}
and set $(B^{-1}\mathcal{R}_\Psi)(Z):=B^{-1}\mathcal{R}_\Psi(Z)$. The map $\mathcal{K}_\Psi$ is self-adjoint and non-negative for $\langle\cdot,\cdot\rangle_F$, and
\begin{equation}\label{eq-defQ}
Q_\theta(\Psi):=\big\langle A_0\Psi^{1/2},\,(\Id_{\ell^2}+\theta\ell\,\mathcal{K}_\Psi)^{-1}(A_0\Psi^{1/2})\big\rangle_F\, ,
\end{equation}
where $\langle Y,Z\rangle_F=\Tr(Y^TZ)$ denotes the Frobenius inner product.
\end{lemma}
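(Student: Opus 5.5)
The computation is a Gaussian integral, so the plan is to write $L_z^{(N)}u$ as an affine function of the independent Gaussian entries of $W_1,\dots,W_k$ and integrate them out. First, identify $\R^{\ell N}$ with $\R^{\ell}\otimes\R^N$, i.e. view $u$ as the $N\times\ell$ matrix $U=(u_1,\dots,u_\ell)$. Then $(A_j\otimes W_j)u$ corresponds to $W_jUA_j^T$, so
$$L_z^{(N)}u \;\leftrightarrow\; \sum_{j=1}^k W_jUA_j^T + UA_0^T=:G+m,$$
with $U^TU=\rho(u)=\Psi$. The vector $G$ is a centered Gaussian in $\R^{N\ell}$, linear in $(W_j)$, and $m$ is deterministic. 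Hence
$$\E_W\big[\exp(-\tfrac{\theta\ell N}{2}\|G+m\|^2)\big]=\det(\Id+\theta\ell N\Sigma)^{-1/2}\exp\big(-\tfrac{\theta\ell N}{2}\langle m,(\Id+\theta\ell N\Sigma)^{-1}m\rangle\big),$$
where $\Sigma$ is the covariance of $G$. The remaining work is to compute $\Sigma$ and reduce both the determinant and the quadratic form to $\ell\times\ell$ objects depending only on $\Psi$.

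For the covariance, use the GOE structure: $\E[(W)_{ab}(W)_{cd}]=\frac1N(\delta_{ac}\delta_{bd}+\delta_{ad}\delta_{bc})$. This gives, for the matrix $G$, the identity $N\,\E[\langle X,G\rangle_F\langle Y,G\rangle_F]=\sum_j\Tr(X^TYA_jU^TUA_j^T)+\Tr(X^TY^T\ldots)$. More precisely, $N\Sigma$ acts on $X\in\R^{N\times\ell}$ by
$$X\mapsto \sum_j\big(XA_j\Psi A_j^T + UA_j^TX^TUA_j^T\big).$$
The first term is $X C_\Psi^T$, which after transposing conventions is $C_\Psi$ acting on the $\ell$-side. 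The second term has rank at most $\ell^2$ and lives on the range of $U$. Write $U=O\Psi^{1/2}$ with $O$ an $N\times\ell$ partial isometry (if $\Psi$ is singular, approximate and use continuity). Then the space $\{OZ: Z\in\R^{\ell\times\ell}\}$ is invariant, and on it $N\Sigma$ acts as $Z\mapsto ZC_\Psi+\Psi^{1/2}\sum_jA_j^TZ^T\Psi^{1/2}A_j^T$. This is $\mathcal K_\Psi$ up to the transpose bookkeeping $Z\leftrightarrow Z^T$, which I would fix so as to match the stated $\mathcal R_\Psi(Z)=\sum_jA_j\Psi^{1/2}Z^TA_j\Psi^{1/2}$. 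On the orthogonal complement, of dimension $(N-\ell)\ell$, only the $C_\Psi$ term survives.

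The determinant therefore factors as $\det(B)^{N-\ell}$ times the determinant on the $\ell^2$-dimensional block, which equals $\det(\Id_{\ell^2}+\theta\ell(C_\Psi\otimes\Id+\mathcal R_\Psi))=\det(B)^{\ell}\det(\Id+\theta\ell B^{-1}\mathcal R_\Psi)$. This yields the factor $-\frac N2\ln\det B-\frac12\ln\det(\Id+\theta\ell B^{-1}\mathcal R_\Psi)$. For the mean, $m=UA_0^T=O\Psi^{1/2}A_0^T$ lies in the invariant block. So the exponent is $-\frac{N\ell\theta}{2}\langle Z_0,(\Id+\theta\ell\mathcal K_\Psi)^{-1}Z_0\rangle_F$ with $Z_0$ corresponding to $A_0\Psi^{1/2}$, which is $Q_\theta(\Psi)$.

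Self-adjointness and nonnegativity of $\mathcal K_\Psi$ follow because it is, up to the isometry $Z\mapsto OZ$, the restriction of the covariance operator $N\Sigma$. One can also check directly that $\langle Y,\mathcal R_\Psi Z\rangle_F=\sum_j\Tr(Y^TA_j\Psi^{1/2}Z^TA_j\Psi^{1/2})$ is symmetric under $Y\leftrightarrow Z$ by cyclicity and transposition. Continuity of $f_N$ in $\Psi$ is clear since $\Psi\mapsto\Psi^{1/2}$ is continuous and all inverses exist, as the operators involved are of the form identity plus a nonnegative operator. The main obstacle is the index bookkeeping in the covariance term coming from the symmetry $W=W^T$: getting the transposes in $\mathcal R_\Psi$ right, including the diagonal variance $2/N$, which is exactly accounted for by the $\delta_{ad}\delta_{bc}$ term. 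A second issue is the reduction to the invariant subspace when $N\ge\ell$; I would handle singular $\Psi$ by a density argument.
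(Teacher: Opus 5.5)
Your proposal is correct and is essentially the paper's proof: Gaussian integration, the $\ell^2$-dimensional invariant subspace of vectors whose blocks lie in $\mathrm{span}\{u_1,\dots,u_\ell\}$, on which the covariance is conjugate to $\mathcal K_\Psi$ (and equals $C_\Psi\otimes\Id_N$ on the complement), the factorization $\det(B)^{N-\ell}\cdot\det(B)^{\ell}\det(\Id_{\ell^2}+\theta\ell B^{-1}\mathcal R_\Psi)$, and density for singular $\Psi$. The transpose bookkeeping you flag does resolve: the paper's isometry $\mathcal U(Z)=((Z\Psi^{-1/2})\otimes\Id_N)u$ is your $OZ^T$, and conjugating your operator $Z\mapsto ZC_\Psi+\sum_j\Psi^{1/2}A_j^TZ^T\Psi^{1/2}A_j^T$ by $Z\mapsto Z^T$ gives exactly $\mathcal K_\Psi$ (with $\Psi^{1/2}A_0^T\mapsto A_0\Psi^{1/2}$); for continuity, note that positivity of $\det(\Id_{\ell^2}+\theta\ell B^{-1}\mathcal R_\Psi)$ follows from that same factorization, not from $B^{-1}\mathcal R_\Psi$ being nonnegative (which it need not be).
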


\begin{proof}
Fix $u\in\mathds{S}^{N\ell-1}$ and abbreviate $\rho(u)=:\Psi$ for the following computation. Recalling that the linearization is of the form $L_z^{(N)}=\sum_{j=1}^kA_j\otimes W_j^{(N)}+A_0\otimes \Id_N$ with GOE matrices $W_1^{(N)},\dots,W_k^{(N)}$, and deterministic $A_0,\dots,A_k$, we conclude that $L_z^{(N)}u$ is a Gaussian random vector. To compute its covariance, recall that for a GOE matrix $W$ and deterministic $x,y\in\R^N$, it holds that
\begin{displaymath}
\E\big[(Wx)(Wy)^T\big]=\frac1N\big(\langle x,y\rangle\,\Id_N+y\,x^T\big)\, .
\end{displaymath}
Write $v^{(j)}:=(A_j\otimes\Id_N)u$, with blocks $v^{(j)}_a=\sum_{b}(A_j)_{ab}u_b\in\R^N$, $a=1,\dots,\ell$. Using the independence of the $W_j$, the matrix $\Sigma := N \Cov(L_z^{(N)}u)$ is the $N\ell\times N\ell$ matrix whose $N\times N$ block $(a,a')$ equals
\begin{equation}\label{eq-cov-correct}
\Sigma_{aa'}=(C_\Psi)_{aa'}\,\Id_N+\sum_{j=1}^k v^{(j)}_{a'}\big(v^{(j)}_{a}\big)^T\, .
\end{equation}
Note that the second term is a partial transpose of $\sum_j(A_j\otimes\Id_N)uu^T(A_j^T\otimes\Id_N)$, whose block $(a,a')$ is $v^{(j)}_a(v^{(j)}_{a'})^T$. 
This allows using the following Gaussian integration formula: For $\theta \geq 0$ and a real Gaussian vector $Y$ with expectation $\xi_Y$ and covariance matrix $\Sigma_Y$, we have
\begin{displaymath}
\E[ \exp (- \theta \|Y\|^2) ] = \frac{\exp( - \theta 
 \langle \xi_Y, (\Id_{N \ell} + 2\theta \Sigma_Y)^{ -1}  \xi_Y \rangle ) }{\sqrt{\det(\Id_{N \ell} + 2\theta \Sigma_Y)}}\, .
\end{displaymath}

Therefore, if we denote $\xi=\E_W[L_z^{(N)}u]$ and $\Sigma = N \Cov(L_z^{(N)}u) $, we have that 
\begin{align}
\E_W[ \exp ( - \tfrac{\theta}{2} N\ell \|L_z^{(N)}u\|^2) ]
&= \frac{\exp( - \tfrac{\theta}{2} N\ell  \langle \xi,  (\Id_{N \ell} + \theta\ell \Sigma)^{ -1} \xi \rangle ) }{\sqrt{\det(\Id_{N \ell} + \theta\ell \Sigma)}}\, .\label{eq-expectationLu}
\end{align}

Next, we express~\eqref{eq-expectationLu} as a function of $\Psi$. Assume first that $u_1,\dots,u_\ell$ are linearly independent, i.e.\ that $\Psi$ is invertible, and let $\mathcal{V}_u:=\mathrm{span}\{e_a\otimes u_b:\,1\leq a,b\leq\ell\}\subset\R^{N\ell}$. Define
\begin{displaymath}
\mathcal{U}:\R^{\ell\times\ell}\to\mathcal{V}_u,\qquad \mathcal{U}(Z):=\big((Z\Psi^{-1/2})\otimes\Id_N\big)u ,
\end{displaymath}
i.e., the $a$-th block of $\mathcal{U}(Z)$ is equal to $\sum_b(Z\Psi^{-1/2})_{ab}u_b$. Since $\langle(X\otimes\Id_N)u,(Y\otimes\Id_N)u\rangle=\Tr(X\Psi Y^T)$, $\mathcal{U}$ is an isometry from $(\R^{\ell\times\ell},\langle\cdot,\cdot\rangle_F)$ onto $\mathcal{V}_u$. We claim that:
\begin{enumerate}
\item[(a)] $\xi=\mathcal{U}(A_0\Psi^{1/2})$;
\item[(b)] $(C_\Psi\otimes\Id_N)\,\mathcal{U}(Z)=\mathcal{U}(C_\Psi Z)$, so that $C_\Psi\otimes\Id_N$ leaves $\mathcal{V}_u$ and $\mathcal{V}_u^\perp$ invariant;
\item[(c)] The second term $\Sigma':=\Sigma-C_\Psi\otimes\Id_N$ in~\eqref{eq-cov-correct} satisfies $\Sigma'\mathcal{U}(Z)=\mathcal{U}(\mathcal{R}_\Psi(Z))$ and $\Sigma'=0$ on $\mathcal{V}_u^\perp$.
\end{enumerate}
Points (a) and (b) are immediate from an explicit computation. For (c), let $x=\mathcal{U}(Z)$, so that $x_{a'}=\sum_cX_{a'c}u_c$ with $X=Z\Psi^{-1/2}$. Then $\langle v^{(j)}_a,x_{a'}\rangle=(A_j\Psi X^T)_{aa'}$, hence
\begin{displaymath}
(\Sigma'x)_a=\sum_{j=1}^k\sum_{a'=1}^\ell v^{(j)}_{a'}\,(A_j\Psi X^T)_{aa'}=\sum_{j=1}^k\sum_{d=1}^\ell(A_j\Psi X^TA_j)_{ad}\,u_d\, ,
\end{displaymath}
so that $\Sigma'x=\mathcal{U}(Z')$ with $Z'=\sum_jA_j\Psi X^TA_j\Psi^{1/2}=\mathcal{R}_\Psi(Z)$. If $x\perp\mathcal{V}_u$, every block $x_{a'}$ is orthogonal to $u_1,\dots,u_\ell$, hence to all $v^{(j)}_a$, and $\Sigma'x=0$.

\medskip
Consequently, $\mathcal{V}_u$ and $\mathcal{V}_u^\perp$ are invariant under $\Sigma$, we have $\Sigma=C_\Psi\otimes \Id_N$ on $\mathcal{V}_u^\perp$, and $\Sigma$ is conjugated by $\mathcal{U}$ to $\mathcal{K}_\Psi$ on $\mathcal{V}_u$. In particular $\mathcal{K}_\Psi$ is self-adjoint and non-negative, and, by~(a), we can now identify that
\begin{displaymath}
\langle \xi,  (\Id_{N \ell} +  \theta\ell \Sigma)^{ -1} \xi \rangle=Q_\theta(\Psi)\, .
\end{displaymath}
For the determinant in the denominator of~\eqref{eq-expectationLu}, we note that $\det(\Id_{N\ell}+\theta\ell\, C_\Psi\otimes\Id_N)=\det(B)^N$, and the restriction of $\Id+\theta\ell\, C_\Psi\otimes\Id_N$ to $\mathcal{V}_u$ is conjugated to $Z\mapsto BZ$, whose determinant is $\det(B)^\ell$. Since on $\mathcal{S}$, $\Id+\theta\ell\mathcal{K}_\Psi=B\,(\Id+\theta\ell B^{-1}\mathcal{R}_\Psi)$, we get
\begin{align*}
\det( \Id_{N \ell} +  \theta\ell \Sigma) &= \frac{\det(B)^{N }}{\det(B)^\ell}\cdot\det(B)^\ell\det\big(\Id_{\ell^2}+\theta\ell B^{-1}\mathcal{R}_\Psi\big)\\
&=\det(B)^{N}\det\big(\Id_{\ell^2}+\theta\ell B^{-1}\mathcal{R}_\Psi\big)\, .
\end{align*}
Plugging these two identities into~\eqref{eq-expectationLu} gives $\E_W[\exp(-\frac{\theta}{2} N\ell\|L_z^{(N)}u\|^2)]=\exp(f_N(\theta,\Psi))$ with $f_N$ as in~\eqref{eq-deffN}.

\medskip
It remains to check continuity. We note that $\Psi\mapsto\Psi^{1/2}$ is continuous on $\mathrm{Sym}_\ell^+(\R)$, $\Id+\theta\ell\mathcal{K}_\Psi\geq\Id$, and 
\begin{displaymath}
\det(\Id_{\ell^2}+\theta\ell B^{-1}\mathcal{R}_\Psi)=\det(B)^{-\ell}\det(\Id_{\ell^2}+\theta\ell\mathcal{K}_\Psi)>0\, ,
\end{displaymath}
where $\mathcal{K}_\Psi$ is non-negative for all $\Psi\geq0$ by continuity. Hence $f_N(\theta,\cdot)$ is indeed continuous on $\mathrm{Sym}_\ell^+(\R)$. Since both sides of the identity are continuous in $u$ and the set of $u$ with~$\rho(u)$ invertible is dense in $\mathds{S}^{N\ell-1}$, the identity holds for every $u$.
\end{proof}

Consider now a random vector $u$ following the uniform law on $\mathds{S}^{\ell N-1}$. Using that
\begin{displaymath}
u\overset{law}{=}\frac{g}{\|g\|}\, ,
\end{displaymath}
where $g$ is a vector of $\ell N$ independent standard Gaussian random variables, we find that the law of $\rho(u)$ is given by the law of an $\ell\times\ell$ Wishart matrix rescaled by its trace. We note the following large deviation principle for $\rho(u)$.

\begin{lemma}[{\cite[Lem.~3.1]{LDP_Kronecker}}]\label{lem-WishartLDP} For $\beta\in \{1,2\}$
let $u$ be a random vector following the uniform law on the sphere in $(\K_{\beta})^{\ell N}$. Then the (upper triangular) entries of $\rho(u)$ satisfy a large deviation principle in $\mathrm{Sym}_\ell^{+,1}(\K_{\beta})$ with speed $\beta N$ and good rate function
\begin{displaymath}
\widetilde{\rate}(X)=-\frac{1}{2}(\ln(\det(X))+\ell\ln(\ell))\, .
\end{displaymath}
\end{lemma}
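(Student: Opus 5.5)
We prove Lemma~\ref{lem-WishartLDP} by writing $\rho(u)$ as a normalized Wishart matrix, computing the density of that Wishart matrix exactly, and reading off the large deviation principle from a Laplace-type argument on the compact set $\mathrm{Sym}_\ell^{+,1}(\K_\beta)$.

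\textbf{Step 1: Wishart representation.} Write $u=g/\|g\|$, where $g=(g_1,\dots,g_\ell)$ has i.i.d.\ standard Gaussian entries in $\K_\beta$ and each block $g_a\in\K_\beta^N$. Set $W:=(\langle g_a,g_b\rangle)_{a,b}$, an $\ell\times\ell$ Wishart matrix with $N$ degrees of freedom. Then
\begin{displaymath}
\rho(u)=\frac{W}{\Tr W}.
\end{displaymath}

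\textbf{Step 2: density of $W$.} For $N\ge \ell$, $W$ has a density on $\mathrm{Sym}_\ell^+(\K_\beta)$ proportional to
\begin{displaymath}
\det(W)^{\frac{\beta}{2}(N-\ell+1)-1}\,e^{-\frac{\beta}{2}\Tr W},
\end{displaymath}
with the usual real or complex normalization.

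\textbf{Step 3: polar decomposition.} Change variables $W=tX$ with $t=\Tr W>0$ and $X\in \mathrm{Sym}_\ell^{+,1}$. The Jacobian is $t^{D-1}\,\dx t\,\dx X$, where $D=\ell+\beta\ell(\ell-1)/2$ is the real dimension. The density factorizes into a function of $t$ times a function of $X$. Hence $X=\rho(u)$ is independent of $t$, and its law on the simplex-like set $\mathrm{Sym}_\ell^{+,1}$ has density
\begin{displaymath}
p_N(X)=Z_N^{-1}\det(X)^{\frac{\beta}{2}(N-\ell+1)-1}
\end{displaymath}
with respect to Lebesgue measure $\dx X$ on that affine slice.

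\textbf{Step 4: the exponent.} Write
\begin{displaymath}
\frac{1}{\beta N}\ln p_N(X)=\frac12\ln\det X-\frac{1}{\beta N}\ln Z_N+O\!\left(\tfrac1N\right)\ln\det X.
\end{displaymath}
The set $\mathrm{Sym}_\ell^{+,1}$ is compact, and $\ln\det$ is upper semicontinuous there, taking the value $-\infty$ on the boundary. The standard Laplace/Varadhan argument for densities of the form $e^{\beta N\phi}$ on a compact set with a continuous $\phi<\infty$ then gives a weak LDP. We verify it on small balls. The upper bound uses $\sup_{B(X,\delta)}\ln\det$. The lower bound uses that $\ln\det$ is continuous at interior points and that balls have positive volume. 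At boundary points with $\det X=0$, the bound $\det\le\det X+o_\delta(1)$ on the ball gives rate $+\infty$. Compactness upgrades the weak LDP to a full LDP with a good rate function. The correction $\frac{\beta}{2}(1-\ell)-1$ in the exponent only contributes $O(1/N)\ln\det$. This is negligible on $\{\det\ge\eta\}$, and on $\{\det<\eta\}$ it only helps the upper bound, since it is multiplied by a positive power of $\det$ at large $N$.

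\textbf{Step 5: the normalization.} Computing the constant is the only real obstacle. From the Laplace principle, $\frac{1}{\beta N}\ln Z_N\to\frac12\sup_{X}\ln\det X$ over $\mathrm{Sym}_\ell^{+,1}$. By AM--GM the supremum is attained at $X=\Id/\ell$, with value $-\ell\ln\ell$. The rate function is therefore
\begin{displaymath}
\widetilde{\rate}(X)=-\frac12\bigl(\ln\det X+\ell\ln\ell\bigr),
\end{displaymath}
as claimed. It is nonnegative and vanishes exactly at $\Id/\ell$. Alternatively, $Z_N$ is an explicit ratio of multivariate Gamma functions, and Stirling's formula gives the same limit.
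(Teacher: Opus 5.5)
Your proof is correct and follows essentially the same route the paper indicates: the paper does not reprove the lemma but cites \cite[Lem.~3.1]{LDP_Kronecker}, noting that $\rho(u)$ is a Wishart matrix rescaled by its trace whose explicit density gives the LDP by the classical Laplace method, which is what your Steps 1--5 carry out. One small wording fix: the correction exponent $\frac{\beta}{2}(1-\ell)-1$ is negative and $\ln\det X<0$ on the slice for $\ell\ge 2$, so this term slightly enlarges the density near the boundary rather than ``helping''; the upper bound works because the full exponent $a_N:=\frac{\beta}{2}(N-\ell+1)-1$ is positive for large $N$, so $\det(Y)^{a_N}\le\bigl(\sup_{B(X,\delta)}\det\bigr)^{a_N}$ for every $Y\in B(X,\delta)$.
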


Combining the above with Varadhan's lemma now gives the desired convergence result for the annealed spherical integral.

\begin{lemma}
Under  Assumption \ref{assum-main}, we have that for every $\theta\ge 0$
\begin{displaymath}
\lim_{N\rightarrow\infty}{\frac{1}{ N}}\ln\E[I_{ N\ell}(-\theta,L_z^{(N)}(L_z^{(N)})^*)]=\sup_\Psi\{F(\theta,\Psi)-\widetilde{\mathcal{I}}(\Psi)\}\, ,
\end{displaymath}
where $\widetilde{\mathcal{I}}$ was defined in Lemma~\ref{lem-WishartLDP} and
\begin{align}
F(\theta, \Psi)&= -\frac{\theta\ell}{2}\,Q_\theta(\Psi)- \frac{1}{2}\ln(\det(\Id_\ell+\theta\ell\, C_\Psi)) \label{eq-defF}
\end{align}
with $C_\Psi$ and $Q_\theta$ defined in~\eqref{eq-defB} and~\eqref{eq-defQ}, respectively.
\end{lemma}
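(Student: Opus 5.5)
By Fubini and Lemma~\ref{lem-annealedcalc}, we have
\begin{displaymath}
\E[I_{N\ell}(-\theta,L_z^{(N)}(L_z^{(N)})^*)]=\E_u\big[\exp(f_N(\theta,\rho(u)))\big].
\end{displaymath}
Write $f_N(\theta,\Psi)=N F(\theta,\Psi)+g(\theta,\Psi)$ with
\begin{displaymath}
g(\theta,\Psi):=-\tfrac12\ln\det(\Id_{\ell^2}+\theta\ell B^{-1}\mathcal{R}_\Psi).
\end{displaymath}
The first two terms of~\eqref{eq-deffN} are exactly $N F(\theta,\Psi)$, since $Q_\theta$ and $B$ do not depend on $N$. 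The plan is to show two things. First, $g$ is bounded on $\mathrm{Sym}^{+,1}_\ell(\R)$ uniformly for fixed $\theta$, so that it contributes $O(1/N)$ to $\frac1N\ln$. Second, $F(\theta,\cdot)$ is continuous and bounded on the compact set $\mathrm{Sym}^{+,1}_\ell(\R)$. Then Varadhan's lemma, applied with the LDP of Lemma~\ref{lem-WishartLDP} (speed $N$ for $\beta=1$) and the bounded continuous functional $F(\theta,\cdot)$, gives the limit $\sup_\Psi\{F(\theta,\Psi)-\widetilde{\mathcal I}(\Psi)\}$.

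For the bound on $g$, use the identity from the proof of Lemma~\ref{lem-annealedcalc}:
\begin{displaymath}
\det(\Id_{\ell^2}+\theta\ell B^{-1}\mathcal{R}_\Psi)=\det(B)^{-\ell}\det(\Id_{\ell^2}+\theta\ell\mathcal{K}_\Psi).
\end{displaymath}
Since $\mathcal{K}_\Psi\ge0$, $\det(\Id+\theta\ell\mathcal K_\Psi)\ge1$. Also $\|\mathcal K_\Psi\|\le c\sum_j\|A_j\|^2\|\Psi\|$, and $\|\Psi\|\le\Tr\Psi=1$, so $\det(\Id+\theta\ell\mathcal K_\Psi)\le (1+c\theta)^{\ell^2}$. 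Similarly $\Id\le B\le(1+c\theta)\Id$, so $1\le\det B\le(1+c\theta)^\ell$. Hence $|g|\le C(\theta,\ell,A)$ uniformly in $\Psi$. It follows that $e^{-C}\E_u[e^{NF(\theta,\rho(u))}]\le \E[I]\le e^{C}\E_u[e^{NF(\theta,\rho(u))}]$, and the constants disappear after taking $\frac1N\ln$.

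For $F$, note that $\ln\det(\Id_\ell+\theta\ell C_\Psi)\in[0,\ell\ln(1+c\theta)]$ and is continuous in $\Psi$. Moreover $0\le Q_\theta(\Psi)\le\|A_0\Psi^{1/2}\|_F^2=\Tr(A_0\Psi A_0^T)\le\|A_0\|^2$, because $(\Id+\theta\ell\mathcal K_\Psi)^{-1}\le\Id$. The continuity of $Q_\theta$ was already established in the proof of Lemma~\ref{lem-annealedcalc}: $\Psi\mapsto\Psi^{1/2}$ is continuous, $\mathcal K_\Psi$ depends continuously on $\Psi$, and inversion is continuous on operators $\ge\Id$. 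So $F(\theta,\cdot)$ is bounded and continuous on $\mathrm{Sym}^{+,1}_\ell(\R)$. This includes the boundary, where $\Psi$ is singular and $\widetilde{\mathcal I}=+\infty$; there the sup is unaffected.

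Varadhan's lemma (both the upper and lower bounds) now applies directly. A bounded continuous function on a space carrying an LDP with a good rate function needs no separate tail condition, and the state space is compact in any case. The only delicate point, and the step I expect to need the most care, is making sure the uniform bound on $g$ holds up to the boundary of the simplex of density matrices. This is exactly what the determinant identity above provides, since it avoids inverting $\Psi$. Once that is in place, the rest is routine.
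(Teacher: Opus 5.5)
Your proposal is correct and follows the paper's proof essentially step by step: you decompose $f_N=NF+\widetilde f_N$, bound $\widetilde f_N$ uniformly via $\widetilde f_N=\frac{\ell}{2}\ln\det B-\frac12\ln\det(\Id_{\ell^2}+\theta\ell\,\mathcal K_\Psi)$, and then apply Varadhan's lemma with Lemma~\ref{lem-WishartLDP}. Your remark that $F(\theta,\cdot)$ is bounded and continuous on the compact set $\mathrm{Sym}^{+,1}_\ell(\R)$ verifies Varadhan's hypotheses directly, which is slightly cleaner than the paper's appeal to a moment condition.
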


\begin{proof}
From Lemma~\ref{lem-annealedcalc}, we have
\begin{align}
\E_W\E_u[\exp(-\tfrac{\theta}{2} N\ell\|L_z^{(N)}u\|^2)]&=\E_u\E_W[\exp(-\tfrac{\theta}{2} N\ell\|L_z^{(N)}u\|^2)]\nonumber\\
&=\E_u[\exp( f_N(\theta,\rho(u)))]\, ,\nonumber
\end{align}
where the function $f_N$ of the profile $\rho(u)=\Psi$ is of the form
\begin{displaymath}
f_N(\theta,\Psi)= NF(\theta,\Psi)+\widetilde{f}_N(\theta,\Psi).
\end{displaymath}
with $F$ in~\eqref{eq-defF}, which is a continuous  function of $\Psi$, and 
$$\widetilde{f}_N(\theta,\Psi)=-\frac12\ln\det(\Id_{\ell^2}+\theta\ell B^{-1}\mathcal{R}_\Psi)\,.$$
As $\Psi\leq\Id_\ell$, we have $\|C_\Psi\|\leq\sum_j\|A_j\|^2$ and $\|\mathcal{R}_\Psi\|\leq\sum_j\|A_j\|^2$. Writing $\widetilde{f}_N=\frac{\ell}{2}\ln\det B-\frac12\ln\det(\Id_{\ell^2}+\theta\ell\mathcal{K}_\Psi)$, we thus obtain the bound
\begin{displaymath}
\sup_\Psi|\widetilde{f}_N(\theta,\Psi)|\leq \ell^2\ln\Big(1+2\theta\ell\sum_{j=1}^k\|A_j\|^2\Big),
\end{displaymath}
which does not depend on $N$. In particular, $\lim_{N\rightarrow\infty}\frac{1}{N}\sup_\Psi|\widetilde{f}_N(\theta,\Psi)|=0$. Moreover, $\rho(u)$ satisfies a large deviation principle (Lemma~\ref{lem-WishartLDP}). Recalling that $\xi=\E_W[L_z^{(N)}u]$ and $\Sigma = N \Cov(L_z^{(N)}u) $, we note that $\|\xi\|\leq \|A_0\|$ and $\|\Sigma\|\leq2\sum_{j=1}^k\|A_j\|^2$. Since further $\|u\|$=1 by definition and $\Sigma$ is positive semi-definite, we readily obtain the necessary moment condition to apply Varadhan's lemma. Hence,
\begin{displaymath}
\lim_{N\rightarrow\infty}{\frac{1}{ N}}\ln\E[I_{N\ell}(-\theta,L_z^{(N)}(L_z^{(N)})^*)] =\sup_\Psi\{F(\theta,\Psi)-\widetilde{\mathcal{I}}(\Psi)\}\, ,
\end{displaymath}
where $\widetilde{\mathcal{I}}$ is the rate function for the profile from Lemma~\ref{lem-WishartLDP}.
\end{proof}
Recall that  the probability that $L^{(N)}_z$ belongs to $\Omega_{\alpha,\kappa}$ is overwhelming  by Lemmas~\ref{lem-prelimbounds}(iii) and~\ref{lem-conc}. We thus conclude the following upper bound from \eqref{eq-ub1}: For every $\theta>0$, it holds that
\begin{displaymath}
\lim_{\epsilon\rightarrow0}\limsup_{N\rightarrow\infty}{\frac{1}{N}}\ln\P[\lambda_{N\ell}(L_zL_z^*)\leq\epsilon] \leq\sup_\Psi[F(\theta,\Psi)-\widetilde{\mathcal{I}}(\Psi)-\ell J_{{\nu_z}}(\theta)]\, .
\end{displaymath}
We finally can let $\theta$ go to infinity to get 
$$%\begin{equation}\label{eq-ldub}
\lim_{\epsilon\rightarrow0}\limsup_{N\rightarrow\infty}\frac{1}{ N}\ln\P[\lambda_{N\ell}(L_zL_z^*)\leq\epsilon] \leq \liminf_{\theta\rightarrow\infty}\sup_\Psi[F(\theta,\Psi)-\widetilde{\mathcal{I}}(\Psi)-\ell J_{\nu_z}(\theta)]\, .
$$

\section{Proof of Theorem~\ref{thm-Lbounds}: Lower Bound}\label{sect-LB}
As before, we restrict the argument to $\beta=1$ and refer to Section~\ref{sect-betas} for $\beta=2$. Recall the set $\Omega_{\epsilon,\alpha,\kappa}$ from~\eqref{eq-defOm}. Similar to the upper bound, we set up by applying the tilt
\begin{align*}
{\frac{1}{ N}}\ln&\P(L_z^{(N)}\in\Omega_{\epsilon,\alpha,\kappa})	= {\frac{1}{N}} \ln \E\Biggl[ \mathds{1}_{\{L_z^{(N)}\in\Omega_{\epsilon,\alpha,\kappa}\}} \frac{I_{N\ell}(-\theta,L_z^{(N)}(L_z^{(N)})^*)}{I_{N\ell}(-\theta,L_z^{(N)}(L_z^{(N)})^*)}\Biggr]\, .\\
\end{align*}
Since we {aim to }take $\theta$ to $+\infty$ later, we need to upper bound the spherical integral in the denominator with some error. However, if we do so naively, we would end up with an error term of order $\varepsilon \theta${. This is insufficient, as we want to be able to choose $\theta$ large enough while keeping $\varepsilon$ fixed}. To solve this problem, we exploit the fact that $L_z^{(N)}(L_z^{(N)})^*$ has to be a positive matrix. 
Let $H_z$ be the self-adjoint matrix with the same eigenvalues and eigenvectors as $L_z^{(N)}(L_z^{(N)})^*$ except for the smallest one which is put to zero. Since $L_z^{(N)}(L_z^{(N)})^*$ is non-negative, $L_z^{(N)}(L_z^{(N)})^*\ge  H_z$ so that since $\theta\ge 0$, $$I_{N\ell}(-\theta,L_z^{(N)}(L_z^{(N)})^*)\le I_{N\ell}(-\theta,H_z)\,.$$
As a consequence, we deduce that
\begin{align}
\frac{1}{N}\ln&\P(L_z^{(N)}\in\Omega_{\epsilon,\alpha,\kappa})	\ge  \frac{1}{N} \ln \E\Biggl[ \mathds{1}_{\{L_z^{(N)}\in\Omega_{\epsilon,\alpha,\kappa}\}} \frac{I_{N\ell}(-\theta,L_z^{(N)}(L_z^{(N)})^*)}{I_{N\ell}(-\theta,H_z)}\Biggr]\nonumber\\
&\ge   \frac{1}{N} \ln \E\Biggl[ \mathds{1}_{\{L_z^{(N)}\in\Omega_{\epsilon,\alpha,\kappa}\}} {I_{N\ell}(-\theta,L_z^{(N)}(L_z^{(N)})^*)}\Biggr]-\ell J_{{\nu_z}}(\theta)-h_{N,\alpha}(\theta)\nonumber\\
&=\frac{1}{N} \ln\Biggl(\frac{\E[ \mathds{1}_{\{L_z^{(N)}\in\Omega_{\epsilon,\alpha,\kappa}\}}I_{N\ell}(-\theta,L_z^{(N)}(L_z^{(N)})^*)]}{\E[I_{N\ell}(-\theta,L_z^{(N)}(L_z^{(N)})^*)]}\Biggr)+\frac{1}{N}\ln\E[I_{N\ell}(-\theta,L_z^{(N)}(L_z^{(N)})^*)] \nonumber\\
&\qquad - \ell J_{{\nu_z}}(\theta)-h_{N,\alpha}(\theta)\, ,
\label{ldlbeq}\end{align}
where

\[ h_{N,\alpha}(\theta) = \sup_{H \in \Omega_{0,\alpha,\kappa}}\left| \frac{1}{N } \ln I_{N \ell} (- \theta,H)  - \ell J_{{\nu_z}}(\theta) \right|. \] 
By Lemma~\ref{lem-convsphint}, $h_{N,\alpha}(\theta)$ is such that for all $\theta \geq 0$

\[ \lim_{N \to \infty} h_{N,\alpha}(\theta) =0\, . \]

We also recognize the annealed spherical integral on the right-hand side of~\eqref{ldlbeq}. For a matching lower bound we thus need to find a good $\theta$ such that the first term on the right-hand side of \eqref{ldlbeq} vanishes in the limit $N\rightarrow\infty$. 
To get there, rewrite
\begin{align*}
&\frac{\E_W[ \mathds{1}_{\{L_z^{(N)}\in\Omega_{\epsilon,\alpha,\kappa}\}}I_{N\ell}(-\theta,L_z^{(N)}(L_z^{(N)})^*)]}{\E_W[I_{N\ell}(-\theta,L_z^{(N)}(L_z^{(N)})^*)]}=\frac{\E_u[\P^{(\mathbf{u},\theta)}(L_z^{(N)}\in\Omega_{\epsilon,\alpha,\kappa})\E_W[\exp(-{\frac{1}{2}}\theta N\ell\|L_z^{(N)}u\|_2^2)]}{\E_W[I_{N\ell}(-\theta,L_z^{(N)}(L_z^{(N)})^*)]}\\
&\qquad\qquad\qquad\qquad\qquad\qquad\qquad\qquad=1 -\frac{\E_u[\P^{(\mathbf{u},\theta)}(L_z^{(N)}\in\Omega_{\epsilon,\alpha,\kappa}^c)\E_W[\exp(-{\frac{1}{2}}\theta N\ell\|L_z^{(N)}u\|_2^2)]}{\E_W[I_{N\ell}(-\theta,L_z^{(N)}(L_z^{(N)})^*)]}
\end{align*}
with the tilted measure
\begin{displaymath}
\P^{(\mathbf{u},\theta)}(A)=\frac{\E_W[\I_A\exp(-{\frac{1}{2}}\theta N\ell\|L_z^{(N)}u\|_2^2]}{\E_W[\exp(-{\frac{1}{2}}\theta N\ell\|L_z^{(N)}u\|_2^2)]}\,.
\end{displaymath}
Our goal is to show that there is a suitable $\theta_0>0$ such that  for $\varepsilon>0$ going to zero
$$
\limsup_{N\rightarrow\infty}
\frac{\E_u[\P^{(\mathbf{u},\theta_0)}(L_z^{(N)}\in\Omega_{\epsilon,\alpha,\kappa}^c)\E_W[\exp(-{\frac{1}{2}}\theta_0 N\ell\|L_z^{(N)}u\|_2^2)]}{\E_W[I_{N\ell}(-\theta_0,L_z^{(N)}(L_z^{(N)})^*)]}\le 1/2\,.$$

By using Lemmas~\ref{lem-prelimbounds}(iii) and~\ref{lem-conc}, 
we see that for every $\theta>0$ we can choose $\kappa>0$ small enough and $\alpha$ large enough, to see that uniformly on $u$ on the sphere
$$%\begin{equation}\label{toprove2}
\limsup_{N\rightarrow\infty}
\P^{(\mathbf{u},\theta_0)}(L_z^{(N)}\in\Omega_{\alpha,\kappa}^c)\le 1/4.$$%\end{equation}

Hence,  our main task is to show that
\begin{equation}\label{toprove3}
\limsup_{N\rightarrow\infty}
\frac{\E_u[\P^{(\mathbf{u},\theta_0)}(\lambda_{N\ell}(L_z L_z^*)\ge \epsilon)\E_W[\exp(-{\frac{1}{2}}\theta_0 N\ell\|L_z^{(N)}u\|_2^2)]]}{\E_W[I_{N\ell}(-\theta_0,L_z^{(N)}(L_z^{(N)})^*)]}\le 1/4\, .
\end{equation}

To get there, let us note the following fact about Gaussian random variables under a tilted measure.
\begin{lemma}\label{lem-tiltedGaussian}
Let $Y$ be a real Gaussian vector with mean $\xi_Y$ and covariance matrix $\Sigma_Y$. Consider the measure $\P^{\mathrm{tilt}}$ that is tilted by $\exp(-\theta\|Y\|_2^2)/Z$, where $Z$ denotes the expectation of $\exp(-\theta\|Y\|_2^2)$ under the original measure $\P$. Then the law of $Y$ under $\P^{\mathrm{tilt}}$ is again Gaussian, and we note the formulas
\begin{equation}\label{eq-tiltedGaussian}
\xi_Y':=(\Id_{N\ell}+2\theta\Sigma_Y)^{-1}\xi_Y,\quad \Sigma_Y':=\Sigma_Y^{1/2}(\Id_{N\ell}+2\theta\Sigma_Y)^{-1}\Sigma_Y^{1/2}
\end{equation}
for the mean and the covariance matrix, respectively.
\end{lemma}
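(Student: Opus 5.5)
The plan is to compute the density of the tilted law directly and complete the square in the exponent. First assume $\Sigma_Y$ is invertible. Then $Y$ has density proportional to $\exp(-\tfrac12\langle y-\xi_Y,\Sigma_Y^{-1}(y-\xi_Y)\rangle)$. Under $\P^{\mathrm{tilt}}$ the density is proportional to
\[
\exp\Big(-\tfrac12\langle y-\xi_Y,\Sigma_Y^{-1}(y-\xi_Y)\rangle-\theta\|y\|_2^2\Big).
\]
The quadratic part of the exponent is $-\tfrac12\langle y,(\Sigma_Y^{-1}+2\theta\Id)y\rangle$. For $\theta\ge0$ this matrix is positive definite, so the tilted law is Gaussian, and its covariance is
\[
(\Sigma_Y^{-1}+2\theta\Id)^{-1}=\Sigma_Y^{1/2}(\Id+2\theta\Sigma_Y)^{-1}\Sigma_Y^{1/2}.
\]
This identity holds because $\Sigma_Y^{-1}+2\theta\Id=\Sigma_Y^{-1/2}(\Id+2\theta\Sigma_Y)\Sigma_Y^{-1/2}$, where all the factors commute since they are functions of $\Sigma_Y$. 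The linear term in the exponent is $\langle y,\Sigma_Y^{-1}\xi_Y\rangle$. Hence the tilted mean is
\[
(\Sigma_Y^{-1}+2\theta\Id)^{-1}\Sigma_Y^{-1}\xi_Y=(\Id+2\theta\Sigma_Y)^{-1}\xi_Y,
\]
which matches~\eqref{eq-tiltedGaussian}. As a by-product, the normalizing constants give the integration formula already used in Lemma~\ref{lem-annealedcalc}.

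To remove the invertibility assumption, I would not use densities. Instead I would write $Y=\xi_Y+\Sigma_Y^{1/2}G$ with $G$ a standard Gaussian vector and compute the Laplace transform $\E^{\mathrm{tilt}}[e^{\langle t,Y\rangle}]=\E[e^{\langle t,Y\rangle-\theta\|Y\|^2}]/Z$. Since $\|Y\|^2$ is a quadratic form in $G$, this is a Gaussian integral over $G$ whose quadratic part is $\Id+2\theta\Sigma_Y$, which is invertible. Completing the square in $G$ gives $\exp(\langle t,\xi'_Y\rangle+\tfrac12\langle t,\Sigma'_Y t\rangle)$ with $\xi'_Y,\Sigma'_Y$ as in~\eqref{eq-tiltedGaussian}. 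Alternatively, one can approximate $\Sigma_Y$ by $\Sigma_Y+\eta\Id$ and let $\eta\downarrow0$, using continuity of both sides in $\Sigma_Y$ and weak convergence of Gaussians.

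The only mildly delicate step is this degenerate-covariance case. The completion of the square in the $G$-variables is the cleanest route: it needs only that $\Id+2\theta\Sigma_Y$ is positive definite, which holds since $\theta\ge0$ and $\Sigma_Y\ge0$. The rest is routine algebra.
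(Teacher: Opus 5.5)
Your proposal is correct and follows the same route as the paper: write the tilted density, read off the quadratic form $\Sigma_Y^{-1}+2\theta\Id$, complete the square, and use $\Sigma_Y^{-1}+2\theta\Id=\Sigma_Y^{-1/2}(\Id+2\theta\Sigma_Y)\Sigma_Y^{-1/2}$ to obtain the stated $\xi_Y'$ and $\Sigma_Y'$. Your Laplace-transform computation via $Y=\xi_Y+\Sigma_Y^{1/2}G$ is a worthwhile addition, because the paper's density argument tacitly assumes $\Sigma_Y$ is invertible, whereas the matrix $\Sigma=N\Cov(L_z^{(N)}u)$ to which the lemma is applied can be singular (for instance when some row of every $A_j$, $j\ge 1$, vanishes).
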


\begin{proof}
We start by observing that the law of $Y$ under $\P^{\mathrm{tilt}}$ has the density
\begin{displaymath}
f_{Y'}=\frac{1}{Z}\frac{1}{(2\pi)^{N\ell/2}|\Sigma|^{1/2}}\exp\Big(-\frac{1}{2}(x-\xi_Y)^T\Sigma_Y^{-1}(x-\xi_Y)-\theta x^Tx\Big)\, .
\end{displaymath}
From here, we extract
\begin{displaymath}
x^T\Big(-\theta\Id-\frac{1}{2}\Sigma_Y^{-1}\Big)x=-\frac{1}{2}x^T(\Sigma_Y')^{-1}x
\end{displaymath}
with $\Sigma_Y'$ from~\eqref{eq-tiltedGaussian}. Completing the square then yields
\begin{displaymath}
-\frac{1}{2}(x-\xi_Y)^T\Sigma_Y^{-1}(x-\xi_Y)-\theta x^Tx=-\frac{1}{2}(x-\xi_Y')^T(\Sigma_Y')^{-1}(x-\xi_Y')+\mathcal{R}\, ,
\end{displaymath}
where we collected all remaining deterministic terms in~$\mathcal{R}$. Computing $Z$ and combining the result with the above now yields that $f_{Y'}$ is indeed the density of a multivariate Gaussian with the desired parameters.
\end{proof}

Recalling that $\xi=\E[L_z^{(N)}u]=(A_0\otimes \Id_N)u$, applying Lemma~\ref{lem-tiltedGaussian}  to the Gaussian vector $Y=L_zu$ yields that the expectation of $\|L_z^{(N)}u\|_2^2$ under $\P^{(u,\theta)}$ is given by
\begin{equation}\label{eq-tiltedex}
\E^{(u,\theta)}[\|L_z^{(N)}u\|_2^2]=\frac{1}{N}\Tr[\Sigma(\Id_{N\ell}+\theta\ell\Sigma)^{-1}]+\|(\Id_{N\ell}+\theta\ell\Sigma)^{-1}\xi\|_2^2\, ,
\end{equation}
where we set again $\xi=\E[L_z^{(N)}u]$ and $\Sigma=N\Cov[L_z^{(N)}u]$. 

\medskip
\underline{\smash{Case $A_0=0$}:} If we set $A_0=0$,~\eqref{eq-tiltedex} simplifies to
\begin{displaymath}
\E^{(u,\theta)}[\|L_z^{(N)}u\|_2^2]=\frac{1}{N}\Tr[\Sigma(\Id_{N\ell}+\theta\ell\Sigma)^{-1}]\, .
\end{displaymath}
Clearly, $\E^{(u,\theta)}[\|L_z^{(N)}u\|_2^2]\to0$ as $\theta\to\infty$, which suggests that $\lambda_{N\ell}(L_zL_z^*)=\min_{\|u\|_2=1}u^TL_z^{(N)}(L_z^{(N)})^*u$ being small becomes typical. Indeed, by Markov's inequality, we get for every $u$ in the sphere that
\begin{align*}
\P^{(u,\theta)}(\lambda_{N\ell}(L_zL_z^*)>\epsilon)&\leq\P^{(u,\theta)}(\|L_z^{(N)}u\|_2^2>\epsilon)\leq\frac{\E^{(u,\theta)}[\|L_z^{(N)}u\|_2^2]}{\epsilon}\\
&=\frac{1}{N\epsilon}\Tr[\Sigma(\Id_{N\ell}+\theta\ell\Sigma)^{-1}]\le \frac{1}{ \epsilon\theta}\, . \nonumber
\end{align*}
The right-hand side is smaller than $1/4$ if $\epsilon\theta \ge 4$, which we can assume.

\medskip
\underline{\smash{Case $A_0\neq 0$}:} Consider now~\eqref{eq-tiltedex} in its general form. As a tool, define
\begin{displaymath}
\sigma^N_0 := \min_{M_1,\dots,M_{k} \in\mathrm{Sym}_N(\R)} \Bigl\{ \sigma_{\min}\Bigl(\sum_{j=1}^k A_j \otimes M_j + A_0 \otimes \Id_N\Bigr)^2 \Bigr\}\, .
\end{displaymath}
We first show that the rate function is infinite when this minimal singular value is positive. As a first step, we note that this quantity does not depend on $N\ge \ell$. 

\begin{lemma}\label{lem-claim1}
We have $\sigma^N_0=\sigma^\ell_0$ for every $N\ge \ell$.
\end{lemma}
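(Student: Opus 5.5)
The plan is to prove the two inequalities $\sigma^N_0\le\sigma^\ell_0$ and $\sigma^\ell_0\le\sigma^N_0$ separately. Throughout, I read the minimum as an infimum over $(M_1,\dots,M_k)$; the argument works equally for infima and minima. I use the variational formula
$$\sigma_{\min}(L)^2=\inf_{\|u\|_2=1}\|Lu\|_2^2,\qquad L(M):=\sum_{j=1}^k A_j\otimes M_j+A_0\otimes \Id_N .$$
Vectors $u\in\R^{\ell N}$ are written in blocks $u=(u_1,\dots,u_\ell)$ with $u_b\in\R^N$, as in Section~\ref{sect-UB}.

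\emph{Step 1: $\sigma^N_0\le\sigma^\ell_0$.} Take symmetric $\ell\times\ell$ matrices $M_1,\dots,M_k$ and set $\widetilde M_j:=M_j\oplus 0_{N-\ell}\in\mathrm{Sym}_N(\R)$. After reordering coordinates, $L(\widetilde M)$ is block diagonal: it equals $\big(\sum_j A_j\otimes M_j+A_0\otimes\Id_\ell\big)\oplus\big(A_0\otimes\Id_{N-\ell}\big)$. The smallest singular value of a direct sum is at most that of each summand. Hence $\sigma_{\min}(L(\widetilde M))^2\le\sigma_{\min}(\sum_j A_j\otimes M_j+A_0\otimes\Id_\ell)^2$, and taking the infimum over $M$ gives the claim.

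\emph{Step 2: $\sigma^\ell_0\le\sigma^N_0$ (compression).} Fix $M_j\in\mathrm{Sym}_N(\R)$ and a unit vector $u$. The blocks $u_1,\dots,u_\ell$ span a subspace of $\R^N$ of dimension at most $\ell$. Since $N\ge\ell$, this subspace is contained in some subspace $W\subset\R^N$ with $\dim W=\ell$; this is exactly where the hypothesis $N\ge\ell$ enters. Let $P_W$ be the orthogonal projection onto $W$ and put $M_j':=P_WM_jP_W$, which is symmetric. Because $P_Wu_b=u_b$ for every $b$, the $a$-th block of $(\Id_\ell\otimes P_W)L(M)u$ is
$$\sum_{j,b}(A_j)_{ab}P_WM_jP_Wu_b+\sum_b(A_0)_{ab}u_b .$$
This is the $a$-th block of $\big(\sum_jA_j\otimes M_j'+A_0\otimes \Id_W\big)u$, where the operator acts on $\R^\ell\otimes W$. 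Since $\Id_\ell\otimes P_W$ is a contraction,
$$\Big\|\Big(\sum_jA_j\otimes M_j'+A_0\otimes\Id_W\Big)u\Big\|_2\le\|L(M)u\|_2 .$$

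\emph{Step 3: transport to $\R^\ell$.} Choose an isometry $O:\R^\ell\to W$. Then $M_j'':=O^TM_j'O\in\mathrm{Sym}_\ell(\R)$ and $u'':=(\Id_\ell\otimes O^T)u$ is a unit vector in $\R^{\ell\cdot\ell}$, because $u\in\R^\ell\otimes W$. Conjugating by $\Id_\ell\otimes O$ gives
$$\Big\|\Big(\sum_jA_j\otimes M_j''+A_0\otimes\Id_\ell\Big)u''\Big\|_2^2\le\|L(M)u\|_2^2 ,$$
so $\sigma^\ell_0\le\|L(M)u\|_2^2$. Taking the infimum over $u$ and then over $M$ gives $\sigma^\ell_0\le\sigma^N_0$.

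The only step needing care is the block-wise identity in Step 2. It relies on $A_0\otimes\Id_N$ acting block-wise only through linear combinations of the $u_b$, which already lie in $W$, and on $P_W$ commuting with the $A$-side of the tensor product. Everything else is routine linear algebra.
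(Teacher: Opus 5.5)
Your proof is correct and follows the same route as the paper. The paper gets $\sigma_0^N\le\sigma_0^\ell$ by the trivial embedding. For the converse it compresses the $M_j$ to an $\ell$-dimensional subspace containing $u_1,\dots,u_\ell$: an orthogonal $O$ rotates this span onto the first $\ell$ coordinates, and it uses $\Pi O M_j O^{-1}\Pi^t$, which is your $O^T P_W M_j P_W O$. Your contraction inequality is also the accurate form of the norm identity that the paper writes as an equality (it is in general strict), and the inequality is all that is needed.
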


\begin{proof}
By definition, we have for every $v \geq 0$ that
\[ \sigma_0^{N} \leq v \Leftrightarrow  \exists M_1,\ldots,M_k\in \mathrm{Sym}_N(\R),\,  \exists u \in \mathds{S}^{N \ell -1}:\ \Big\| \Bigl(\sum_{k} A_k \otimes M_k + A_0 \otimes \Id_N\Bigr) u\Big\|_2^2 \leq v\, .\]
For every $u, M_1, \dots M_k$ satisfying the above inequality, one can choose an orthogonal matrix $O$ that maps the components $u_1,\dots, u_{\ell}$ arbitrarily to the subspace $V^{N}_{\ell}\subset \R^N$ spanned by the first $\ell$ vectors of the canonical basis (i.e., the vectors whose coordinates $\ell+1$ to $N$ vanish). If we call $\Pi$ the linear application from $\R^N$ to $\R^{\ell}$ that maps $(x_i)_{i \leq N}$ to $(x_i)_{i \leq\ell}$, taking $\widetilde{M}_k := \Pi O M_k O^{-1} \Pi^{t} $, and $\widetilde{u} = \Id_{\ell} \otimes (\Pi O)u$, we can see that 

\begin{displaymath}
\Big\| \Bigl(\sum_{j=1}^k A_j \otimes \widetilde{M}_j + A_0 \otimes \Id_\ell\Bigr) \widetilde{u}\Big\|_2^2 = \Big\| \Bigl(\sum_{j=1}^k A_j \otimes M_j + A_0 \otimes \Id_N\Bigr) u\Bigr\|_2^2\, .
\end{displaymath}
In particular $\Pi^t$ is the linear application that takes a vector of $\R^{\ell}$ and ``completes'' it to a vector of $\R^N$ by setting all coordinates of indices $\ell+1$ to $N$ to zero. The vector $\widetilde{u}$ on the left hand side is essentially a unit vector up to projection and $\widetilde{M}_k$ are matrices of $\mathrm{Sym}_\ell(\R)$. Therefore, $\sigma^{\ell}_0 \leq \sigma^N_0$, and the equality is proven as the reverse inequality is trivial.
\end{proof}

Following Lemma~\ref{lem-claim1}, we abbreviate $\sigma^N_0=\sigma_0$. We claim that the 
rate function is infinite when this minimal singular value is positive.

\begin{lemma}\label{lem-claim2}Assume that $\sigma_0>0$, then
$$-\mathcal{I}_{L_z}=
\lim_{\theta\to\infty}\sup_\Psi[F(\theta,\Psi)-\widetilde{\mathcal{I}}(\Psi)-\ell J_{\nu_z}(\theta)]=-\infty\, .$$
Recall that $F$ was introduced in~\eqref{eq-defF}, $\widetilde{\mathcal{I}}$ is the rate function in Lemma~\ref{lem-WishartLDP}, and $J_{\nu_z}$ is the limit of the spherical integral (cf. Lemma~\ref{lem-convsphint}).
\end{lemma}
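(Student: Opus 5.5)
Since $\sigma_0>0$, for every $N$ and every realization of the Gaussian matrices we have $\lambda_{N\ell}(L_z^{(N)}(L_z^{(N)})^*)\ge\sigma_0$. This holds because $W_1,\dots,W_k$ are symmetric, so $L_z^{(N)}$ is among the matrices in the definition of $\sigma_0^N=\sigma_0$ (Lemma~\ref{lem-claim1}). The plan is to turn this deterministic lower bound into a bound on $\sup_\Psi[F(\theta,\Psi)-\widetilde{\mathcal I}(\Psi)]$ that grows at most like $-\tfrac{\theta\ell}{2}\sigma_0$ plus lower-order terms, and to compare it with the growth of $\ell J_{\nu_z}(\theta)$.

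\textbf{Step 1 (annealed bound).} For every $u$ on the sphere and every realization, $\|L_z^{(N)}u\|_2^2\ge\sigma_0$. Hence
\[
\E_W[I_{N\ell}(-\theta,L_zL_z^*)]\le e^{-\frac{\theta}{2}N\ell\sigma_0}.
\]
By the preceding lemma (Varadhan), this gives $\sup_\Psi\{F(\theta,\Psi)-\widetilde{\mathcal I}(\Psi)\}\le-\tfrac{\theta\ell}{2}\sigma_0$ for every $\theta\ge0$.

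\textbf{Step 2 (growth of $J_{\nu_z}$).} Here I use $J_{\nu_z}(\theta)=J_{\nu_z}(0,-\theta)$. Because $\lambda_{N\ell}\ge\sigma_0$ deterministically, Lemma~\ref{lem-prelimbounds}(ii) forces $\ell_z=\min\supp\nu_z\ge\sigma_0>0$. The spherical integral is taken at the smallest eigenvalue $0<\ell_z$, i.e. with the eigenvalue placed at $0$ (as for $H_z$). By the explicit formula \eqref{eq-defJ}, transferred to negative $\theta$ by the symmetry $x\mapsto-x$, we are for large $\theta$ in the second regime, with $x=0$. There
\[
J_{\nu_z}(\theta)=\tfrac12\Bigl(-\theta\cdot0-(1+\ln\theta)-\int\ln|y|\,d\nu_z(y)\Bigr),
\]
which is finite since $\supp\nu_z\subset[\sigma_0,r_z]$. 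Thus $\ell J_{\nu_z}(\theta)=-\tfrac{\ell}{2}\ln\theta+O(1)$.

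\textbf{Step 3 (conclusion).} Combining the two steps,
\[
\sup_\Psi[F-\widetilde{\mathcal I}]-\ell J_{\nu_z}(\theta)\le-\tfrac{\theta\ell}{2}\sigma_0+\tfrac{\ell}{2}\ln\theta+O(1)\to-\infty,
\]
which gives the claim.

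The main obstacle is Step 2: checking the regime condition $\theta\ge -m_{\nu}(x)$ after the reflection for negative $\theta$, and making sure the eigenvalue argument is $0$ and not $\ell_z$. If instead the argument were $\ell_z$, the first regime would give a term linear in $\theta$ (namely $-\tfrac12\theta\ell_z$), and I would then need $\sigma_0$ compared with $\ell_z$. For that case I would rely on the tilt in Step 1, possibly sharpened using $H_z$-type comparisons.
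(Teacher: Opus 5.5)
Your proof is correct and follows the paper's argument: the deterministic bound $\|L_z^{(N)}u\|_2^2\ge\sigma_0$ gives $\sup_\Psi[F(\theta,\Psi)-\widetilde{\mathcal I}(\Psi)]\le-\frac{\theta\ell}{2}\sigma_0$, and $\ell J_{\nu_z}(\theta)=-\frac{\ell}{2}\ln\theta+O(1)$ grows only logarithmically. Your Step 2 is more careful than the paper, which simply asserts the $\ln\theta$ behaviour. The worry you raise at the end does not arise, because Theorem~\ref{thm-Lbounds} fixes the eigenvalue argument to be $0$, and $0$ lies strictly below $\ell_z\ge\sigma_0>0$.
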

\begin{proof}
Recall that
\[ \widetilde{F}(\theta) = \lim_{N \to \infty} \frac{1}{N}\ln \E[ I_{N \ell}(-\theta, L_z^{(N)}(L_z^{(N)})^*)] = \max_{\Psi} (F(\theta,\Psi) - \widetilde{\mathcal{I}}(\Psi))\,. \]
Clearly, $\widetilde{F}( \theta ) \leq -\frac{\theta }{2}\ell\sigma_0$ since $\|L_z^{(N)} u\|^2\ge \sigma_0$ for every vector $u$ and all matrices $(W_1,\ldots,W_k)$. Therefore,
$$-\mathcal{I}_{L_z^{(N)}}\le 
\lim_{\theta\to\infty}\sup_\Psi[-\tfrac{\theta}{2}\ell\sigma_0-\widetilde{\mathcal{I}}(\Psi)-\ell J_{\nu_z}(\theta)]\, .$$
However, by the definition of $J_{\nu_z}$ given in \eqref{eq-defJ}, $J_{\nu_z}(\theta)$ goes to infinity more slowly than~$\theta$. More precisely, it behaves like $\ln \theta$, as $\theta$ goes to infinity. The result follows. 
\end{proof}

As a consequence, we may restrict ourselves to prove the lower bound when $\sigma_0=0$.

\begin{lemma}\label{lem-claim3}
The annealed spherical integral is asymptotically equivalent to $e^{-N\frac{\theta}{2}\sigma_0 \ell}$, i.e.,
$$
\lim_{\theta \to \infty}\frac{\widetilde F(\theta)}{ \theta} = - \frac{1}{2} \ell\sigma_0\,.
$$
\end{lemma}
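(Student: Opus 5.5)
The bound $\widetilde F(\theta)/\theta\le -\tfrac12\ell\sigma_0$ is the pointwise inequality $\|L_z^{(N)}u\|_2^2\ge\sigma_0$ for every unit $u$ and every realization of $(W_1,\ldots,W_k)$. It gives $\E[I_{N\ell}(-\theta,L_zL_z^*)]\le e^{-\frac{\theta}{2}N\ell\sigma_0}$, as already used in Lemma~\ref{lem-claim2}, and hence $\limsup_{\theta\to\infty}\widetilde F(\theta)/\theta\le-\tfrac12\ell\sigma_0$.

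For the reverse inequality we bound the variational formula $\widetilde F(\theta)=\sup_\Psi\{F(\theta,\Psi)-\widetilde{\mathcal I}(\Psi)\}$ from below by testing a well-chosen $\Psi$. By Lemma~\ref{lem-claim1} and compactness (the minimization may be restricted to bounded $M_j$, after dividing out large directions), fix $\eta>0$ and choose $M_1,\ldots,M_k\in\mathrm{Sym}_\ell(\R)$ and a unit $u^\star\in\R^{\ell^2}$ with $\|(\sum_j A_j\otimes M_j+A_0\otimes\Id_\ell)u^\star\|^2\le\sigma_0+\eta$. Perturbing $u^\star$ slightly, we may assume $\Psi^\star=\rho(u^\star)$ is invertible, so that $\widetilde{\mathcal I}(\Psi^\star)<\infty$ is a constant independent of $\theta$.

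It then suffices to show $F(\theta,\Psi^\star)\ge-\frac{\theta\ell}{2}(\sigma_0+\eta)-O(\ln\theta)$. Since $-\frac12\ln\det(\Id_\ell+\theta\ell C_{\Psi^\star})=O(\ln\theta)$, the only real work is the bound $Q_\theta(\Psi^\star)\le\sigma_0+\eta$. For this we use the variational characterization of $Q_\theta$, which is cleanest at the level of the Gaussian computation. We have
$$\langle\xi,(\Id+\theta\ell\Sigma)^{-1}\xi\rangle=\inf_{w}\{\|\xi-w\|^2+(\theta\ell)^{-1}\langle w,\Sigma^{-1}w\rangle\}$$
with $w$ ranging over the range of $\Sigma$. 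Equivalently, $-\frac{\theta\ell}{2}N\,Q_\theta$ is, up to the determinant term, the Gaussian log-density cost of shifting the mean of $L_z u$. The natural choice is $w=-\big((\sum_j A_j\otimes M_j')\,u\big)$, where $u\in\R^{N\ell}$ is any lift of $u^\star$ (as in Lemma~\ref{lem-claim1}) and $M_j'$ is the embedding of $M_j$ in $\mathrm{Sym}_N(\R)$. This $w$ lies in the range of $\Sigma$, because $\Sigma$ is $N$ times the covariance of the centred Gaussian vector $\sum_j(A_j\otimes W_j)u$. Moreover,
$$\langle w,\Sigma^{-1}w\rangle\le N^{-1}\cdot\sum_j N\|M_j\|_{HS}^2/2\cdot O(1)=O(1),$$
since the Cameron--Martin norm of the shift $W_j\mapsto M_j$ for a GOE matrix is $\frac N4\Tr M_j^2$, and dividing by $N$ from $\Sigma=N\Cov$ gives a bounded quantity. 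With this choice,
$$\|\xi-w\|^2=\Big\|\Big(\sum_jA_j\otimes M_j+A_0\otimes\Id\Big)u\Big\|^2\le\sigma_0+\eta,$$
hence $Q_\theta(\Psi^\star)\le\sigma_0+\eta+O(1/\theta)$.

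Combining these bounds gives $\widetilde F(\theta)\ge-\frac{\theta\ell}{2}(\sigma_0+\eta)-C\ln\theta-C$, so $\liminf_{\theta\to\infty}\widetilde F(\theta)/\theta\ge-\frac12\ell(\sigma_0+\eta)$, and letting $\eta\to0$ completes the proof. The main obstacle is carrying out the Cameron--Martin bound cleanly. If the identification of the range and norm of $\Sigma$ through $\mathcal U$ and $\mathcal K_\Psi$ becomes messy, the fallback is to bypass $Q_\theta$ and bound directly. Restrict the expectation defining $\E[I_{N\ell}]$ to $u$ near $u^\star$ and the $W_j$ in a ball of radius $\eta$ (in the rescaled Hilbert–Schmidt norm) around the $M_j$. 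That ball has probability $e^{-O(N)}$ uniformly in $\theta$, and on it $\|L_zu\|^2\le\sigma_0+O(\eta)$, which yields the same lower bound after taking $\frac1N\ln$.
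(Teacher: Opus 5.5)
Your main argument is correct, and it takes a genuinely different route from the paper.

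\textbf{The paper's route.} The paper proves the lower bound probabilistically. It shows $\P[\|L_z^{(N)}u\|_2^2\le\sigma_0+\epsilon]\ge e^{-N(c(\epsilon)+o(1))}$ jointly in $(u,W)$ by forcing three things:
\begin{itemize}
\item the top-left $\ell\times\ell$ corners of the $W_j$ to lie near a near-minimizer (Laplace method for the corner density);
\item the blocks $Y_j$ formed by rows $\ell+1,\dots,N$ of the first $\ell$ columns to be small (Gamma large deviations);
\item $u$ to lie near the lifted vector $\hat u$ (sphere neighbourhood estimate).
\end{itemize}
Exponential tightness of $\|L_z^{(N)}\|_{\mathrm{op}}$ then controls $L_z^{(N)}(u-\hat u)$. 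This gives $\widetilde F(\theta)\ge-\frac{\theta}{2}\ell(\sigma_0+\epsilon)-c(\epsilon)$ with a $\theta$-independent error, using only the definition of $\widetilde F$ as a limit.

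\textbf{Your route.} You plug one invertible profile $\Psi^\star$ into the Varadhan formula. You then bound $Q_\theta(\Psi^\star)$ deterministically via $\langle\xi,(\Id+t\Sigma)^{-1}\xi\rangle=\inf_{w}\{\|\xi-w\|^2+t^{-1}\langle w,\Sigma^{+}w\rangle\}$, choosing $w=-\sum_j(A_j\otimes M_j')u$.
\begin{itemize}
\item This avoids all the probabilistic estimates.
\item The price is reliance on the exact Gaussian computation of Lemma~\ref{lem-annealedcalc} and on the variational formula for $\widetilde F$.
\item You also lose $O(\ln\theta)$ from $\ln\det B$, which is harmless after dividing by $\theta$.
\end{itemize}

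\textbf{The step that needs an explicit line.} This is the contraction of Cameron--Martin norms under linear maps. Write $\Sigma=N\,TCT^*$, with $T(m)=\sum_j(A_j\otimes m_j)u$ and $C$ the covariance of $(W_1,\dots,W_k)$. Then $\langle Tm,\Sigma^{+}Tm\rangle\le N^{-1}\langle m,C^{-1}m\rangle=\frac12\sum_j\Tr(m_j^2)$. Your constants $\frac N4$ versus $\frac N2$ are mixed up, but only boundedness matters. The resulting cost $\frac14\sum_j\Tr(M_j^2)$ in $F$ is exactly the Laplace cost the paper pays for the GOE corners, which is a good consistency check.

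Your compactness parenthetical is unnecessary. For fixed $\eta$ any near-minimizer will do, and its size only enters the $O(1/\theta)$ term.

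\textbf{The fallback fails as phrased.} Forcing the whole matrix $W_j$ into an $\eta$-ball around the embedded $M_j$ costs $e^{-cN^2}$, not $e^{-O(N)}$. This holds in operator norm and in normalized Hilbert--Schmidt norm, because the constraint pins down the bulk of the spectrum of $W_j$. Only the first $\ell$ columns of $W_j$ interact with the lifted vector. Localizing just those, plus a typical bound on $\|W_j\|_{\mathrm{op}}$, is what keeps the paper's version at cost $e^{-O(N)}$.
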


\begin{proof}
We already discussed  the easy bound $\widetilde{F}( \theta ) \leq - \frac{\theta}{2} \ell\sigma_0$. To get the converse bound, it suffices to prove that for all $\epsilon >0$, there exists some $c(\epsilon)>0$ finite  such that 
\begin{equation}\label{bing}
\P[ \|L_z^{(N)} u \|_2^2 \leq \sigma_0 + \epsilon] \geq e^{ -N (c(\epsilon)+o(1))}\,,
\end{equation}
where the probability holds on both $u$ and $W_j^{(N)}, 1\le j\le k$. Indeed from such a bound, one gets by localizing the expectation 
$$\E_W[I_{N\ell}(-\theta,L_zL_z^*)]
\ge \E_W[\E_u[\I_{\|L_z^{(N)} u \|_2^2 \leq \sigma_0 + \epsilon}e^{-\frac{\theta}{2}\ell N\|L_z u\|^2_2}
]]\ge e^{ N(-\frac{\theta}{2} (\sigma_0 +\epsilon)\ell  - c( \epsilon)+o(1))}$$ from which we deduce
$\widetilde F ( \theta) \geq - \frac{\theta}{2}\ell (\sigma_0 +\epsilon)  - c( \epsilon) $. 
Now to get the lower bound \eqref{bing}, it is enough to notice that
\begin{displaymath}
\|L_z^{(N)} u \|_2^2=\sum_{j,j'=0}^k\sum_{s,t=1}^\ell(A_j^TA_{j'})_{s,t}\langle W^{(N)}_j u_s,
W^{(N)}_{j'} u_t\rangle
\end{displaymath}
with the convention that $W^{(N)}_0= \Id_N$ so that to achieve the desired deviation \eqref{bing}, we only need the matrix $\big(\langle W^{(N)}_j u_s,
W^{(N)}_{j'} u_t\rangle, j,j'\le k, s,t\le \ell\big)$ to be $\delta(\epsilon)$-close to the matrix $\big( \langle \widetilde M_j \widetilde u_s, \widetilde M_{j'}\tilde u_t\rangle\big)$ where $(\widetilde M_1,\ldots, \widetilde M_k,\widetilde u)$ are minimizers in $\sigma_0^\ell$ and $\delta(\epsilon)>0$ is small enough. 

\medskip
If we take the same notation $\Pi$ as in the proof of Lemma~\ref{lem-claim1} and denote by $\Delta$ the application from $\R^N$ to $\R^{N - \ell}$ that maps $(x_i)_{i \geq 1}$ to $(x_i)_{i \geq \ell+1}$. Let
\begin{align*}
\hat{u}_i &:= \Pi^{t} \widetilde{u}_i, \quad i=1,\dots,\ell\\
\widetilde{W}_j^{(N)} &:= \Pi W_j^{(N)}  \Pi^t, \quad j = 0, \dots, k\\
Y_j^{(N)} &:= \Delta W_j^{(N)} \Pi^t, \quad j = 0, \dots, k.
\end{align*}
Observe that $\widetilde{W}_0^{(N)}=\Id_{\ell}$ and $Y_0^N = 0$. We further have
\begin{displaymath}
\langle W^{(N)}_j \hat{u}_s,
W^{(N)}_{j'} \hat{u}_t\rangle = \langle \widetilde{W}^{(N)}_j  \widetilde{u}_s,
\widetilde{W}^{(N)}_{j'} \widetilde{u}_t\rangle + \langle Y^{(N)}_j \widetilde{u}_s,
Y^{(N)}_{j'} \widetilde{u}_t\rangle\, .
\end{displaymath}
Therefore, we only need the matrices $\widetilde{W}^{(N)}_j$ to be close to $\widetilde{M}_j$ and the matrices $Y_j^N$ to be close in operator norm to $0$, provided $u-\hat u$ is small. More precisely, we find $\delta(\varepsilon)$ going to zero with $\varepsilon$ such that
\begin{align}
 \Bigl(\bigcap_j \{\|\widetilde{W}^{(N)}_j&-  \widetilde{M}_j\|_{\rm{op}}\le \delta(\varepsilon)\}\Bigr)\cap %\Bigl(\bigcap_t\{\|\widetilde{u}_t-\widetilde{v}_t\|_2\le \delta(\varepsilon)\}\Bigr)\cap
 \Bigl(\bigcap_j\{\|Y_j^N\|_{\rm{op}}\le \delta(\varepsilon)\}\Bigr)\nonumber\\
  &\cap \{\|L_z^{(N)}\|_{\rm{op}}\le M\}\cap \{\|u-\hat u\|_2\le \varepsilon/M\}\subset  \{\|L_z^{(N)}u\|_2^2\le \sigma_0+\varepsilon\}\, .\label{zxc} 
\end{align}
The two last sets ensure that $\|L_z^{(N)}(u-\hat u)\|_2$ is small. The matrix $\widetilde{W}^{(N)}_j$ is distributed like the $\ell \times \ell$ top left corner of an $N \times N$ GOE matrix. Its distribution has the density 

\[\frac{1}{Z_N} \exp \Big( - \frac{N  \Tr ( |H|^2)}{4} \Big) \]

with respect to the Lebesgue measure on the $\ell \times \ell$ real symmetric matrices. Therefore, using a classical Laplace method, we find 

\[ \P[ \| \widetilde{W}_j^N - \widetilde{M}_j \|_{\rm{op}} \leq \eta ] \geq  \exp\Bigl( -\frac{N (\Tr(| \widetilde{M}_j|^2) + o(1))}{4} \Bigr)\,, \] 
where $o(1)$ goes to zero when $\eta$ goes to zero.
For the matrices $Y_j^N$ we will use that we can bound $||Y_j^N||_{\rm{op}}$ by $\sqrt{ \Tr(Y_j^N (Y_j^N)^*)}$. Note, however, that $\Tr(Y_j^N (Y_j^N)^*)$ is a Gamma random variable with shape parameter $\frac{(N-\ell) \ell}{2}$ and scale parameter $2/N $. Once again, with a classical Laplace method, one can see that this random variable satisfies on $\R^+$ a large deviation principle with speed $N$ and rate function $x \mapsto \frac{1}{2}[ (x- \ell) - \ell \ln (x/ \ell)]$. Therefore, we find  that 

\begin{eqnarray*} \P[ \|Y_j^N \|_{\rm{op}} \leq \eta ] &\geq&  \P[\Tr(Y_j^N (Y_j^N)^*)\leq \eta^2 ] \\
&\geq& \P \Big[ \left| \Tr(Y_j^N (Y_j^N)^*) - \frac{\eta^2}{2} \right| \leq \frac{\eta^2}{2} \Big] \\
&\geq& \exp\Bigr(- \frac{N }{2} \Big(\frac{\eta^2}{2} - \ell   -\ell \ln \frac{\eta^2}{2 \ell}    +o(1)\Big)  \Bigl)\, , \end{eqnarray*} 
where $o(1)$ goes to zero when $N$ goes to infinity and then $\eta$ goes to zero.
Lastly using classical neighborhood estimation on the unit sphere, we can write for $u$ distributed uniformly on the sphere $\mathds{S}^{N\ell-1}$
\begin{equation}\label{eq-uminusuhat}
\P[\| u - \hat{u}\|_2 \leq \varepsilon ] \geq e^{ N( \ell \ln \varepsilon +o(1))}\, ,
\end{equation}
where $o(1)$ goes to zero when $N$ goes to infinity and then $\varepsilon$ goes to zero.
Finally, since the~$\tilde{W}_j^N$,~$Y_j^N$, and~$u$ are independent random variables, we may write by \eqref{zxc}, for some $\eta >0$ depending on $\epsilon/2$
\begin{align*} 
&\P[ \|L_z^{(N)} u \|^2 \leq \sigma_0 + \varepsilon] + \P[ \|L_z^{(N)}\|_{\rm{op}} \geq M ] \\
&\quad\geq \P[ \forall j=1,\dots, k,  \|Y_j^N \|_{\rm{op}} \leq \eta ]\ \P[ \forall j=1,\dots, k,  \|\tilde W_j^N - \tilde{M}_j\|_{\rm{op}} \leq \eta ]\ \P[  \|u - \hat{u}\|_2 \leq \varepsilon/2M ]\, .
\end{align*}
If we take $\frac{1}{N}\ln$ and then the  $\liminf$ of the left hand side, we deduce that 

\[ \liminf_{N\to\infty} \frac{1}{N} \ln \Big(  \P[ \|L_z^{(N)} u \|^2 \leq \sigma_0 + \varepsilon] + \P[ \|L_z^{(N)}\|_{\rm{op}} \geq M ] \Big) \geq - c( \varepsilon) + \ell (\ln(\varepsilon) - \ln(2M))  \]
for some $c(\epsilon)>0$. Recalling Lemma \ref{lem-prelimbounds}(iii) and choosing $M=M(\varepsilon) >0$ large enough such that $c'(\varepsilon)=c( \epsilon) - \ell (\ln(\epsilon) -\ln(2M)) < \frac{1}{4}\ell\sqrt{M}-c$, we can write that 

\[  \liminf_{N\to\infty} \frac{1}{N} \ln  \P[ \|L_z^{(N)} u \|_2^2 \leq \sigma_0 + \varepsilon]  \geq - c'(\varepsilon)  \]
for some $c'(\varepsilon) >0$.
\end{proof}

\medskip
We finally prove \eqref{toprove3}. We denote by 
\begin{eqnarray*}
\P^{(\theta)} (A)&:=&
\frac{\E_u[\P^{(\mathbf{u},\theta)}(L_z^{(N)}\in A)\E_W[\exp(-\frac{\theta}{2} N\ell\|L_z^{(N)}u\|_2^2)]]}{\E_W[I_{N\ell}(-\theta,L_z^{(N)}(L_z^{(N)})^*)]}\\
&=&\frac{\E_W[\I_{\{L_z^{(N)}\in A\}} I_{N\ell}(-\theta,L_z^{(N)}(L_z^{(N)})^*)] }{\E_W[I_{N\ell}(-\theta,L_z^{(N)}(L_z^{(N)})^*)]}\,,\\
\end{eqnarray*}
and $\E^{(\theta)}$ for the associated expectation.
\begin{lemma}
Assume that $\sigma_0=0$. Then,
for every $\epsilon>0$
$$\limsup_{\theta\rightarrow \infty }\limsup_{N\rightarrow \infty }\P^{(\theta)}(\lambda_{N\ell}(L_zL_z^*)\ge  \epsilon )=0\,.$$
\end{lemma}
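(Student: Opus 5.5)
The plan is to control $\P^{(\theta)}(\lambda_{N\ell}(L_zL_z^*)\ge\epsilon)$ through the tilted expectation of $\|L_z^{(N)}u\|_2^2$, exactly as in the case $A_0=0$. Under the annealed tilt $\P^{(\theta)}$, the pair $(u,W)$ has joint law proportional to $\exp(-\frac{\theta}{2}N\ell\|L_z^{(N)}u\|_2^2)\,\dx u\,\dx\P_W$. Since $\lambda_{N\ell}(L_zL_z^*)\le\|L_z^{(N)*}u\|_2^2$ for any unit $u$, it suffices to bound this quantity for the tilted $u$; for this to be useful we should tilt with $L_z^{*}$ (i.e.\ $u^*L_zL_z^*u$). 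The cleanest route is to work directly with $\|L_z^{(N)}u\|_2^2$ and the smallest singular value of $L_z$, which equals $\lambda_{N\ell}(L_zL_z^*)$ since $L_z$ is square. Markov's inequality then gives
\[
\P^{(\theta)}(\lambda_{N\ell}(L_zL_z^*)\ge\epsilon)\le \P^{(\theta)}\big(\|L_z^{(N)}u\|_2^2\ge\epsilon\big),
\]
so we must show that the tilted probability that $\|L_zu\|_2^2\ge\epsilon$ vanishes as $N\to\infty$ and then $\theta\to\infty$.

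To do this I would use a large-deviation comparison at speed $N$ instead of Markov. Write
\[
\P^{(\theta)}(\|L_zu\|^2\ge\epsilon)=\frac{\E[\I_{\|L_zu\|^2\ge\epsilon}e^{-\frac{\theta}{2}N\ell\|L_zu\|^2}]}{\E[e^{-\frac{\theta}{2}N\ell\|L_zu\|^2}]}.
\]
The numerator is at most $e^{-\frac{\theta}{2}N\ell\epsilon}$. For the denominator, the proof of Lemma~\ref{lem-claim3} with $\sigma_0=0$ gives, for every $\eta>0$, $\liminf_N\frac1N\ln\E[\cdot]\ge-\frac{\theta}{2}\ell\eta-c'(\eta)$ by localizing on $\{\|L_zu\|^2\le\eta\}$. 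Choosing $\eta=\epsilon/2$ yields
\[
\limsup_N\frac1N\ln\P^{(\theta)}(\|L_zu\|^2\ge\epsilon)\le-\frac{\theta}{4}\ell\epsilon+c'(\epsilon/2),
\]
which is negative once $\theta>4c'(\epsilon/2)/(\ell\epsilon)$. The probability then decays exponentially in $N$, hence tends to $0$, uniformly for all large $\theta$. This gives the statement, and is even stronger than needed.

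The main obstacle is that the constant $c'(\eta)$ from Lemma~\ref{lem-claim3} must be independent of $\theta$ and finite for each fixed $\eta$. This holds because \eqref{bing} is a pure probability estimate on $(u,W)$ under the untilted law, and it uses $\sigma_0=0$ to pick minimizers $(\widetilde M_j,\widetilde u)$ with $\|(\sum_jA_j\otimes\widetilde M_j+A_0\otimes\Id_\ell)\widetilde u\|^2\le\eta/2$. (If the infimum is not attained, take near-minimizers; the Laplace estimates only use $\Tr|\widetilde M_j|^2<\infty$.) One should also check whether the event concerns $L_z$ or $L_z^*$. Since $\lambda_{N\ell}(L_zL_z^*)=\sigma_{\min}(L_z)^2=\min_u\|L_zu\|^2$ for square $L_z$, the spherical integral of $L_zL_z^*$ tests $\|L_z^*u\|^2$. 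If we instead tilt by $\|L_z^*u\|^2$, the argument is identical with $A_j$ replaced by $A_j^T$, and $\sigma_0$ is unchanged because the square matrix $\sum_jA_j\otimes M_j+A_0\otimes\Id$ and its transpose share their smallest singular value. I would also record that the $o(1)$ in the denominator bound is uniform enough to pass to $\limsup_N$ before $\theta\to\infty$.
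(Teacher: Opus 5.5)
Your proof is correct, but it takes a different route from the paper's.

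The paper's argument runs as follows:
\begin{itemize}
\item It reduces, via Markov's inequality, to the tilted mean $\E^{(\theta)}[\|L_z^{(N)}u\|_2^2]$.
\item It writes this mean as $-\frac{2}{N\ell}\partial_\theta\ln\E[I_{N\ell}(-\theta,L_zL_z^*)]$.
\item Using the slope inequality for this convex function of $\theta$, it bounds the mean by $\widetilde F(\theta-1)-\widetilde F(\theta)$ in the limit.
\item It then uses Lemma~\ref{lem-claim3} ($\widetilde F(\theta)/\theta\to-\frac{\ell}{2}\sigma_0$) and the monotonicity of the one-sided derivatives of the convex limit $\widetilde F$ to conclude that this tends to $\frac{\ell}{2}\sigma_0=0$.
\end{itemize}

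You skip the derivative and convexity step entirely. Instead you feed the untilted small-ball estimate \eqref{bing}, which is the input behind Lemma~\ref{lem-claim3}, directly into a ratio bound. On $\{\lambda_{N\ell}(L_zL_z^*)\ge\epsilon\}$ the spherical integral is at most $e^{-\frac{\theta}{2}N\ell\epsilon}$. Localizing the normalization on $\{\|L_zu\|_2^2\le\epsilon/2\}$ shows it is at least $e^{-N(\frac{\theta}{4}\ell\epsilon+c'(\epsilon/2)+o(1))}$, with $c'$ independent of $\theta$.

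This is more elementary and gives a stronger conclusion: the tilted probability decays exponentially in $N$ for every $\theta>4c'(\epsilon/2)/(\ell\epsilon)$, rather than merely having a $\limsup_N$ that vanishes as $\theta\to\infty$. The paper's argument, in exchange, controls the tilted mean of $\|L_zu\|_2^2$ itself for any value of $\sigma_0$. It also reuses the asymptotics of $\widetilde F$ that are already established.

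Two minor points.
\begin{itemize}
\item The step $\P^{(\theta)}(\lambda_{N\ell}\ge\epsilon)\le\P^{(\theta)}(\|L_zu\|_2^2\ge\epsilon)$ is an event inclusion, not Markov's inequality.
\item Your remark that the literal tilt is by $\langle u,L_zL_z^*u\rangle=\|L_z^*u\|_2^2$ is accurate, and harmless, since replacing $A_j$ by $A_j^T$ leaves $\sigma_0$ unchanged. So is your remark that near-minimizers suffice when the infimum defining $\sigma_0$ is not attained. Both tidy up points the paper passes over.
\end{itemize}
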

\begin{proof}
By Markov's inequality, it is enough to prove that 
$$\lim_{\theta\rightarrow \infty }\limsup_{N\rightarrow \infty }\E^{(\theta)}[ \|L_z^{(N)}u\|_2^2]\le \sigma_0=0\,.$$
But
\[ \E^{(\theta)}[ \|L_z^{(N)}u\|^2] = - \frac{2}{N\ell}\frac{\partial}{ \partial \theta}  \ln \E[ I_{N \ell}(-\theta,L_z^{(N)} (L_z^{(N)})^*)] \]
and, therefore, by convexity of $\theta\mapsto \ln \E[I_{N \ell}(-\theta,L_z^{(N)} (L_z^{(N)})^*)] $, using the slope inequalities, we have 
\[\frac{\ell}{2}\E^{(\theta)}[ \|L_z^{(N)}u\|_2^2] \leq  \frac{1}{N}  \ln \E[ I_{N \ell}(-(\theta-1),L_z^{(N)} (L_z^{(N)})^*)]-  \frac{1}{N}  \ln \E[ I_{N \ell}(-\theta,L_z^{(N)} (L_z^{(N)})^*)] \,. \]
Thus, taking $N$ to infinity we get 

\[ \limsup_{N \to \infty}\frac{\ell}{2} \E^{(\theta)}[ \|L_z^{(N)}u\|_2^2] \leq   \widetilde{F}(\theta-1) - \tilde{F}(\theta)\, .\]

Now one readily checks that $\widetilde{F}$ is convex (as a limit of convex functions) therefore, if we denote $\widetilde{F}'_r$ its right  derivative and $\widetilde{F}'_l$ its left derivative, both are increasing and by the bounds given above, we have necessarily that they must go to $- \frac{\ell}{2} \sigma_0$ for $\theta$ going to $+ \infty$. We therefore deduce
\[ \limsup_{N \to \infty}\frac{\ell}{2} \E^{(\theta)}[ \|L_z^{(N)}u\|_2^2] 
 \leq - \widetilde{F}_r'(\theta-1)\, . \]
Taking $\theta$ to $\infty$ gives
\[ \lim_{\theta \to \infty} \limsup_{N \to \infty} \E^{(\theta)}[ \|L_z^{(N)}u\|_2^2] \le \sigma_0\,. \]
\end{proof}

\begin{remark}
We can generalize the previous arguments to the case where the profile $\rho(u)$ of $u$ is fixed:
Define
$$\sigma^N_0(\Psi) := \min_{M_1,\dots,M_{k} \in\mathrm{Sym}_N(\R)} \min_{u\in \mathds{S}^{N\ell-1}:
\rho(u)=\Psi} \Bigl\{ \Big\| \Bigl(\sum_{k} A_k \otimes M_k + A_0 \otimes \Id_N\Bigr) u\Big\|_2^2\Bigr\}\, . $$
Then, $\sigma^N_0(\Psi)$ does not depend on $N$ large enough and if we denote by $\sigma_0(\Psi)$ its value for~$N$ large enough, 
$$-\mathcal{I}_{L_z}:= \lim_{\theta\to\infty}\sup_{\Psi:\sigma_0(\Psi)\le \epsilon(\theta)} [F(\theta,\Psi)-\widetilde{\mathcal{I}}(\Psi)-\ell J_{\nu_z}(\theta)]\, ,$$
where $\epsilon(\theta)$ goes to zero with $\theta$.
Indeed, similarly for every $u$ so that $\rho(u)=\Psi$,
$$\E_W[\exp(-\tfrac{\theta}{2} N\ell \|L_z^{(N)} u\|^2)] \le \exp(-\tfrac{\theta}{2} N\ell  \sigma_0(\Psi))$$
from which we see that 
$$F(\theta,\Psi)-\widetilde{\mathcal{I}}(\Psi)-\ell J_{\nu_z}(\theta)\le -\frac{\ell \theta}{2} \sigma_0(\Psi)+\frac{\ell}{2}\ln \theta +C\, ,$$
where $C$ is some finite constant. Thus, if $\sigma_0(\Psi)\gg \ln\theta /\theta$, the right hand side goes to~$-\infty$. The result follows.

\end{remark}

\section{Proof of Theorem \ref{thm-main}}\label{Sec:proofthm}
As before, let us fix $\beta=1$, and refer to Section~\ref{sect-betas} for the modification of the argument to $\beta=2$. We first recall the convergence of the spectrum of $P_{N}=P( W_1^{(N)}, \cdots, W_k^{(N)})$. By Voicu\-lescu's result, see \cite[Thm.~5.1.1]{AGZBook}, the distribution of the empirical measure $\hat\mu_{P_N}$ of the eigenvalues of $P(W^{(N)}_1,\ldots,W^{(N)}_k)$ converges towards the distribution $\mu_P$ of $P(S_1,\ldots,S_k)$ where $S_1,\ldots,S_k$ are free semi-circular variables.  Moreover, by concentration of measure, we can see
by adapting the argument of \cite[Cor.~6.12]{AliceStFlour} that for every Lipschitz function~$f$, every $\delta>0$
\begin{equation}\label{expti}
\limsup_{N\rightarrow\infty}\frac{1}{N}\ln\P\Bigl(\Big|\int f\mathrm{d}\hat\mu_{P_N}-\E\Big[\int f \mathrm{d}\hat\mu_{P_N}\Big]\Big|>\delta\Bigr)=-\infty\,.
\end{equation}
In fact, \cite[Cor.~6.12]{AliceStFlour} and \cite[Thm.~6.6.2]{AliceStFlour} cover only the case $f(x)=x$ (take $V=0$). However,  the argument generalizes as follows. First, consider
\begin{displaymath}
\Lambda_M^N:=\bigcap_{1\le i\le k}\{X_i\in \mathrm{Sym}_N(\R):\ \|X_i\|_{\rm{op}}\le M\}\,.
\end{displaymath}
We claim that on $\Lambda_M^N$,
$\int f\rm{d}\hat\mu_{P_N}$ is a Lipschitz function of the entries of the matrices $(W_1^{(N)},\ldots, W_k^{(N)})$ with some Lipschitz constant $C(M)\|f\|_L/\sqrt{N}$.
Indeed,
\begin{eqnarray}
    &&\Big|\frac{1}{N}\Tr(f(P(X_1,\ldots, X_k))-\frac{1}{N}\Tr(f(P(Y_1,\ldots, Y_k))\Big|\nonumber\\
    &\le& \|f\|_L\frac{1}{N}\Tr(|P(X_1,\ldots, X_k)-P(Y_1,\ldots, Y_k)|)\nonumber\\
    %&\le& \|f\|_L\frac{1}{N}\Tr(|P(X_1,\ldots, X_k)-P(Y_1,\ldots, Y_k)|)\nonumber\\
    &\le& c(M)\|f\|_L\sum_{i=1}^k\frac{1}{N}\Tr(|X_i-Y_i|)\le \frac{c(M)\sqrt{k}}{\sqrt{N} }\sqrt{\sum_{i=1}^k \Tr|X_i-Y_i|^2}\,.\nonumber\end{eqnarray}
Since $(W_1^{(N)},\ldots, W^{(N)}_k)$ have Gaussian entries with variance of order $1/N$, we deduce (see, e.g., Lemma 4.2, Corollary 4.7, and Lemma 5.9 of \cite{AliceStFlour}) that
\begin{equation*}
\P\Bigl( \Bigl\{\Big|\int f\rm{d}\hat\mu_{P_N}-\E\Bigl[\int f\rm{d}\hat\mu_{P_N}\Bigr]\Big|>\delta+\delta_M^N\Bigr\}\cap \Lambda_M^N\Bigr)
\le e^{-c\frac{N^2\delta^2}{C(M)^2}}
\end{equation*}
with $\delta_M^N=\E[ 1_{(\Lambda^N_M)^c} (c_1(M)+c_2(M)\sqrt{\sum_i \Tr (W_i^{(N)})^2})]$. 
\eqref{exptlambda} implies that $\delta_M^N $ goes to zero for $M\ge M_0$ and also \begin{equation}\label{exptih}
\P((\Lambda_M^N)^c)\le k e^{-\frac{\beta}{8} N M^2}\,.
\end{equation} 
With~\eqref{exptih}, we get \eqref{expti}. 

\medskip
By a covering argument and \eqref{expti},
we deduce as in \cite{GuionnetZeitouni00} that
\begin{equation}\label{convd}
\limsup_{N\rightarrow\infty}\frac{1}{N}\ln\P(\rm{d}(\hat\mu_{P_N},\mu_P)>\delta)=-\infty\,,
\end{equation}
where $\mathrm{d}(\cdot,\cdot)$ denotes a suitable metric compatible with the weak convergence of probability measures, e.g., the Wasserstein distance. Furthermore, by \cite{HST2006,Schultz2005} we know that $\lambda_1(P)$ converges almost surely towards $r_P$, the right boundary of the support of $\mu_P$ since it asserts that the spectrum of $P(W_1^{(N)},\ldots, W_k^{(N)})$ stays at distance smaller than $\delta$ from the spectrum of $P(S_1,\ldots,S_k)$ with large probability. Note that we can use the same argument  as above and \cite[Thm.~6.6.2]{AliceStFlour} to show that $\lambda_1(P)$ concentrates as the eigenvalues are Lipschitz functions of the entries of the matrix, namely for every $M>0$ there exist~$c, c(M)>0$, so that for every $ \delta>0$,
\begin{equation}\label{exptigl}
\P( \{|\lambda_1(P)
-\E[\lambda_1(P)]|>\delta+\delta_M^N\}\cap \Lambda_M^N)
\le e^{-c\frac{N\delta^2}{c(M)^2}}\, ,
\end{equation}
where $\delta_M^N$ goes to zero as $N$ goes to infinity for every $M$ large enough. The same holds for the smallest eigenvalue.

\medskip
Equipped with this tool, we now turn to the proof of Theorem~\ref{thm-main}. 
\begin{proof}[Proof of Theorem~\ref{thm-main}] We start with the large deviation principle for the spectrum given in Part (i).

\underline{\smash{Exponential tightness}}: We first observe that the distribution of the spectrum is exponentially tight since $P_{N}$ can only have an eigenvalue greater than $M$ if one of the matrices $W_i^{(N)}$ has norm larger than some $cM^\alpha+c'$ for some $c,\alpha>0$ and $c'$ finite.
We, therefore, deduce from \eqref{exptlambda} that exponential tightness holds in the sense that
$$\limsup_{M\to\infty}\limsup_{N\to\infty}\frac{1}{N}\ln \P(
\Spec(P_N)\cap [-M,M]^c\neq\emptyset)=-\infty\,.$$
As a consequence, it is enough to prove a weak large deviation principle, namely the theorem for small balls:
\begin{eqnarray}
&&\lim_{\varepsilon\to 0}\limsup_{N\to\infty}\frac{1}{N}\ln \P(
\Spec(P_N)\cap [x-\varepsilon,x+\varepsilon]\neq\emptyset)\qquad\nonumber\\
&&\qquad =\lim_{\varepsilon\to 0}\liminf_{N\to\infty}\frac{1}{N}\ln \P(
\Spec(P_N)\cap [x-\varepsilon,x+\varepsilon]\neq\emptyset)=-\mathcal{I}_{\Spec(P)}(x)\,.\label{wldpspec}\end{eqnarray}

\underline{\smash{Weak LDP for the spectrum}}: This is a direct consequence of Theorem \ref{thm-Lbounds} since
$$\{\Spec(P_N)\cap [x-\varepsilon,x+\varepsilon]\neq\emptyset\}=\{\dist(x,\Spec(P_N))\le \varepsilon\}$$ 
and hence, by Theorem \ref{thm-Lbounds}, we deduce that Theorem \ref{thm-main}(i) holds with $\mathcal{I}_{\Spec(P)}(x)=\mathcal I_{L_x}$.

\smallskip
\underline{\smash{Properties of the rate function}}: The fact that $\mathcal{I}_{\Spec(P)}$ is a good rate function is a direct consequence from the LDP, see e.g \cite[Lemma 1.2.18 (b)]{DemboZeitouniBook}. It can also be argued directly as follows. To show that $z\mapsto \mathcal{I}_{L_z}$ is lower semi-continuous,  note that by Lemma \ref{lem-prelimbounds}(iii), $L_z^{(N)}$ is bounded by $M$ with probability greater than $1-e^{(c-M\ell/4)N}$. On this set
$$|\lambda_{N\ell}(L_z (L_z)^*)-
\lambda_{N\ell}(L_{z'} L_{z'}^*)|\le 2M |z-z'|\,.$$
As a consequence, if $\mathcal{I}_{L_z}$ is finite, we can find $M$ and $N$ large enough so that
$$\P(|\lambda_{N\ell}(L_zL_z^*)|\le \varepsilon)\ge \P(|\lambda_{N\ell}(L_{z'}L_{z'}^*)|\le \delta)-e^{(c-M\ell/4)N}$$
for $\delta<\varepsilon/2$ and $|z-z'|\le \varepsilon/4M$.Taking liminf in $N$ on both sides, then $\delta$ to zero yields
$$\liminf_{N\to\infty}\frac{1}{N}\ln \P(|\lambda_{N\ell}(L_zL_z^*)|\le \varepsilon)\ge -\mathcal{I}_{L_{z'}} \,.$$
Finally, taking instead of $z'$ any sequence $z_n$ converging towards 
$z$, so as close to $z$ for $n$ large as wished, we deduce by letting $\varepsilon$ go to zero that
$$-\mathcal{I}_{L_z}\geq \limsup_{n\to\infty} -\mathcal{I}_{L_{z_n}}$$ 
which is equivalent to saying that $z\mapsto \mathcal{I}_{L_z}$ is lower semi-continuous. Thus its level sets are closed. They are included in compacts by exponential tightness.

\smallskip
We observe from the last part that $\mathcal{I}_{\Spec(P)}(x)=\mathcal I_{L_x}$. This shows in particular that $\mathcal I_{L_x}$ does not depend on the choice of the linearization.

\medskip
Next, we prove Part (ii) of Theorem~\ref{thm-main}.

\underline{\smash{Exponential tightness}}: The random variable $\lambda_1(P)$ is exponentially tight since to be large, one at least of the matrices needs to have a large norm, an event which is exponentially small by \eqref{exptih}. 

\smallskip
\underline{\smash{LDP for the largest eigenvalue}}: We first remark that for every $\varepsilon>0$,
$$\limsup_{N\to \infty}\frac{1}{N}\ln\P(\lambda_1(P)\le r_P-\varepsilon)=-\infty$$
by \eqref{convd} since on $\{\lambda_1(P)\le r_P-\varepsilon\}$ the empirical measure can not be close to $\mu_P$. As a consequence, 
we  have $\mathcal{I}_{\lambda_{\rm{max}}(P)}(x)=+\infty$ if $x<r_P$ .

\medskip
If $x>r_P$, we observe that
    $$%\begin{equation}\label{lkj}
    \P(
    \lambda_1(P)\ge x+\varepsilon)\le\P(\exists y\ge x:
    \dist(y, \Spec(P_N))\le \varepsilon)
    \le \P(
    \lambda_1(P)\ge x-\varepsilon)\,.
   $$% \end{equation}
    Noticing that
    $$\{\exists y\ge x:
    \dist(y, \Spec(P_N))\le \varepsilon\}=\{\Spec(P_N)\cap [x-\varepsilon,+\infty[\neq\emptyset\}\, ,$$
    we deduce from the first point of Theorem \ref{thm-main} that
    \begin{equation}\label{wldub}\limsup_{N\to\infty}\frac{1}{N}\ln \P(\lambda_1(P)\ge x)\le -\lim_{\varepsilon \downarrow 0}\inf_{z\ge x-\varepsilon } \mathcal{I}_{\Spec(P)}(z)=- \inf_{z\ge x} \mathcal{I}_{\Spec(P)}(z)\,,\end{equation}
    where we used that $\mathcal{I}_{\Spec(P)}(z)$ is a good rate function.
    Similarly,
    \begin{equation}\label{wldlb}
    \liminf_{N\to\infty}\frac{1}{N}\ln \P(\lambda_1(P)\ge x)\ge -\lim_{\varepsilon \downarrow 0}\inf_{{z> x+\varepsilon} } \mathcal{I}_{\Spec(P)}(z)=-\inf_{{z> x} } \mathcal{I}_{\Spec(P)}(z) \,.\end{equation}

\medskip
We next show that $ \mathcal{I}_{\Spec(P)}$ is increasing on $]r_P,+\infty[$. To this end, we follow the idea of~\cite[Lemma B.1]{BoursierG24} to interpolate the matrices.
    We may and shall assume that $\mathcal{I}_{\Spec(P)}(x)$ is finite since otherwise we are done.  We consider  the symmetric Ornstein-Uhlenbeck process on matrices 
$${\mathrm{d}G^{(N)}(s)=\mathrm{d}H^{(N)}(s)-\frac{1}{2} G^{(N)}(s) \mathrm{d}s}\, ,$$
where $H^{(N)}$ is a $N\times N$ symmetric Brownian motion and $G^{(N)}({0})=W^{(N)}$ is a GOE with variance $1$. 
We also denote by $\bar G^{(N)}(s)$ the solution constructed with the same Brownian motion but starting from zero. It is well known that $G^{(N)}(s)$ has the law of a Gaussian ensemble  with variance $1$ at any time $s$, whereas $\bar G^{(N)}(s)$ has  the law of the Gaussian ensemble with variance $(1- e^{-s})$.  Moreover,
$${\mathrm{d}(G^{(N)}(s)-\bar G^{(N)}(s))^{2}=-(G^{(N)}(s)-\bar G^{(N)}(s))^{2} \mathrm{d}s}$$
implies that
\begin{equation}\label{dis}
\|G^{(N)}(s)-\bar G^{(N)}(s)\|_{\rm{op}}\le \|W^{(N)}\|_{\rm{op}} e^{-s/2}\, .
\end{equation}
We will later take $s=N$. Moreover we work on the event where $\|W^{(N)}\|_{\rm{op}}\le N$ so that $\|G^{(N)}(s)-\bar G^{(N)}(s)\|_{\rm{op}}$ is of order $Ne^{-s/2}\le \delta/2$ if $s$ is of order $N$. %(here $t$ is fixed, independently of $N$).
We take $k$ independent copies $G^{(N)}_i, \bar G^{(N)}_i$ with independent initial data $W^{(N)}_i$, $1\le i\le k$, of these random processes. We denote in short $\|X^{(N)}\|_{\rm{op}}=\max_{1\le i\le k}\|X^{(N)}_i\|_{\rm{op}}$ for $k$ $N\times N$ Hermitian matrices. 
We finally set 
$$X^{(N)}(s):= P(G^{(N)}_1(s),\ldots,G^{(N)}_k(s))\mbox{ and }\bar X^{(N)}(s):=P(\bar G^{(N)}_1(s),\ldots,\bar G^{(N)}_k(s))\,.$$
Note that for every integer $p$,  $X^{(N)}(\frac{p}{N})$ has the same law as $X^{(N)}(0)$ and in particular its largest eigenvalue converges towards $r_P$ almost surely and in expectation. Observe also that by \eqref{dis}, there exist  finite constants $c,c'$ so that
\begin{equation*}
\|\bar X^{(N)}(N)-X^{(N)}(N)\|_{\rm{op}} \le c(\max_{1\le \ell\le k}\|G^{(N)}_{\ell}(N)\|^d_{\rm{op}} +
\|\bar G^{(N)}_\ell(N)\|^d_{\rm{op}}+c')
\|W^{(N)}\|_{\rm{op}}\, e^{-N/2}\, .\end{equation*}
As a consequence, since the moments of operator norms of Gaussian matrices are well controlled,  we find a finite constant $C$ such that
$$| \E[\lambda_{1}(\bar X^{(N)}(N))]-\E[\lambda_{1}( X^{(N)}(N))]|\le Ce^{-N/2}\, .$$
Hence, 
$\E[\lambda_{1}(\bar X^{(N)}(N))]$ converges as $N$ goes to infinity towards $r_P$. By concentration of measure for $\lambda_{1}(\bar X^{(N)}(N))$ {(see \cite[Thm.~2.3.5]{AGZBook})}, we also have that for every $\eta>0$, every $M$, there exists a constant  $c<\infty$ and a function $C(M)>0$ going to infinity like $M^{D-1}$ if $D$ is the degree of $P$, so that
$$
\P( |\E[\lambda_{1}(\bar X^{(N)}(N))]-\lambda_{1}(\bar X^{(N)}(N))|\ge \eta)\le 2e^{-\frac{N\eta^{2}}{4 C(M)^2}}+e^{-cN(M-M_0)}\, ,
$$
where the last term comes from bounding the matrices by $M$ to make the polynomial Lipschitz. We deduce that $\lambda_1(\bar X^{(N)}(N))$ converges almost surely towards $r_P$ and therefore $S_\delta(\bar X^{(N)}(N)):=\{
\Spec(\bar X^{(N)}(N))\cap [r_P+\delta,+\infty[=\emptyset\}$ and $S_\delta(X^{(N)}(N))=\{\Spec( X^{(N)}(N))\cap [r_P+\delta,+\infty[= \emptyset\}$ are  almost surely realized for $N$ large enough for every $\delta>0$. 
We deduce that for $N\gg \ln \delta^{-1}$ large enough,
\begin{equation}\label{bbQ}Q_{N}:=\P\left(
S_\delta(X^{(N)}(N))
\big| \Spec(X^{(N)}(0))\cap [x-\delta,x+\delta]\neq\emptyset
, \|W^{(N)}\|_{\rm{op}}\le N\right)\ge 1/2{.}\end{equation}
Indeed, \eqref{dis} implies that $\|X^{(N)}(N)-\bar X^{(N)}(N)\|_{\rm{op}}$ is of order $(cN^D+C)e^{-N/2}$ when  we are on the set  where the $W_{\ell}^{(N)}$ are bounded in operator norm by $N$ and   we are on the set $\Omega_N$ where the operator norms of $ G_N^\ell(N)$  and $\bar G_N^\ell(N)$ are all  bounded by $N$ (here $c,D,C$ depend on $P$).  For $N$ large enough so that $(N^D+C) e^{-N/2}\le \delta/2$, we deduce that $Q_N$ is bounded below by
\begin{equation*}
\begin{split}
&\ge\frac{ \P(S_{\delta/2}(\bar X^{(N)}(N))
\cap\{ \Spec(X^{(N)}(0))\cap [x-\delta,x+\delta]\neq\emptyset\}\cap  \|W^{(N)}\|_{\rm{op}}\le N\}\cap\Omega_N)}{ \P( \Spec(X^{(N)}(0))\cap [x-\delta,x+\delta]\neq\emptyset , \|W^{(N)}\|_{\rm{op}}\le N\})}\\
&\ge \frac{ \P(S_{\delta/2}(\bar X^{(N)}(N))
\P( \{ \Spec(X^{(N)}(0))\cap [x-\delta,x+\delta]\neq\emptyset\}\cap  \|W^{(N)}\|_{\rm{op}}\le N\}) 
-\P(\Omega_N^c)}{ \P(
\Spec(X^{(N)}(0))\cap [x-\delta,x+\delta]\neq\emptyset
, \|W^{(N)}\|_{\rm{op}}\le N)}\\
%&=\P(\{ |\lambda_{1}( \bar X^{(N)}(N))-{\ell}_{\nu,t}^{\Lambda}|\le \delta/2\}){.}
\end{split}
\end{equation*}
where we finally  used that $\bar X^{(N)}$ is independent from $X^{(N)}(0)$ and $W_i^{(N)}$. Finally, $P(\Omega_N^c)$ is of order $e^{-cN^2}$ whereas the denominator is of order $e^{-cN}$ since we assumed its entropy finite. Hence the last term in the above right hand side is at most of order $1/4$ for $N$ large enough. Thus,  we arrive at
\begin{equation*}
Q_{N}\ge \P(S_{\delta/2}(\bar X^{(N)}(N)))-\frac{1}{4}\, .
\end{equation*} 
Since the first term goes to one by the previous considerations, we deduce that for $N$ large enough, $Q_N$ is greater than $1/2$, namely \eqref{bbQ}.

As a consequence, we deduce 
 that the above  {right-hand} side goes to {$1$} as $N$ goes to infinity.
The fact that $Q_{N}$ is bounded {from} below by $1/2$  implies that
\begin{align}
& \P(\Spec{ X^{(N)}(0)}\cap [x-\delta,x+\delta]\neq\emptyset
, \|W^{({N})}\|_{\rm{op}}\le N)\nonumber\\
&\le 2  \P(S_{\delta/2}(X^{(N)}(N))\cap\{ \Spec{ X^{(N)}(0)}\cap [x-\delta,x+\delta]\neq \emptyset\}\cap\{ \|W^{({N})}\|_{\rm{op}}\le N\})\nonumber\\
 &\le 2  \P(S_{\delta/2}(X^{(N)}(N))\cap \{ \Spec{ X^{(N)}(0)}\cap [x-\delta,x+\delta]\neq\emptyset\}, \{\|W^{({N})}\|_{\rm{op}}\le N\}\cap\Omega'_{N })\nonumber\\
 &\qquad+2\P((\Omega_{N}')^{c})\, ,\label{bb1}
\end{align}
where
$${\Omega'_{N}=\bigcap_{0\le p\le N^{2}-1}
 \Bigr\{\Bigr\|X^{(N)}\Bigr(\frac{p+1}{N}\Bigr)-X^{(N)}\Bigr(\frac{p}{N}\Bigr)\Bigr\|_{\rm{op}}\le\delta\Bigr\}\, .} $$
Since
$G^i_{N}(\frac{p+1}{N})-G^i_{N}(\frac{p}{N})$
is a Gaussian matrix with variance $2(1-e^{-1/(2N)})\sim \frac1N$, we get 
$$\P(\max_{ i\in\{1,\ldots,k\}\atop  p\in\{1,\ldots,N^2\}}
\Big\|G_i^{(N)}\Bigr(\frac{p+1}{N}\Bigr)-G_i^{(N)}\Bigr(\frac{p}{N}\Bigr)\Big\|_{\rm{op}}\ge\delta)\le 2kN^{2}e^{-\frac{1}{4}N^{2}\delta^{2}}\, .$$
Similarly
$$\P(\max_{ i\in\{1,\ldots,k\}\atop  p\in\{1,\ldots,N^2\}}
\Big\|G_i^{(N)}\Bigr(\frac{p+1}{N}\Bigr)\Big\|_{\rm{op}}\ge N^{1/D})\le 2kN^{2}e^{-\frac{1}{4}N^{1+\frac{2}{D}}}\, .$$
Hence, taking $D$ equal to twice  the degree of $P$, we find that
\begin{eqnarray*}
&&\P(\Big\|X^{(N)}\Bigr(\frac{p+1}{N}\Bigr)-X^{(N)}\Bigr(\frac{p}{N}\Bigr)\Bigr\|_{\rm{op}}\ge\delta)\\
&\le& \P( (cN^{\frac{D-1}{2D}}+c')
\max_i\Big\|G_i^{(N)}\Bigr(\frac{p+1}{N}\Bigr)-G_i^{(N)}\Bigr(\frac{p}{N}\Bigr)\Big\|_{\rm{op}}\ge\delta)+2kN^{2}e^{-\frac{1}{4}N^{1+\frac{2}{D}}}\\
&\le &
2kN^{2}e^{-\frac{1}{4}(\frac{N}{cN^{\frac{D-1}{2D}}+c'})^2\delta^{2}}+2kN^{2}e^{-\frac{1}{4}N^{1+\frac{2}{D}}}\, .
\end{eqnarray*}
We deduce that $\P((\Omega_N')^c)$ is at most of order $e^{-c N^{1+\frac{1}{2D}}\delta^2}$ for some positive constant $c$.

Moreover, we claim that the spectrum $\Spec(X)$ is continuous. In fact,
if $\lambda_N(X)\le \lambda_{N-1}(X)\le \cdots \lambda_1(X)$, the distance between the spectrum of two self-adjoint $N\times N$ matrices $X,Y$ satisfies
$$\max_{1\le i\le N}|\lambda_i(X)-\lambda_i(Y)|\le \|X-Y\|_{\rm{op}}$$
by Weyl's inequalities\cite[Theorem A.5]{AGZBook}. As a consequence, if the $i$-th eigenvalue of $X^{(N)}(0)$ belongs to $[x-\delta,x+\delta]$, since $y$ lies in between $x$ and $r_P$, we see that on $\Omega_N'\cap S_{\delta/2}(X^{(N)}(N))$ there
must be a $p\in [0,N^2]$ so that $\lambda_i(X^{(N)}(\frac{p}{N})\in [y-\delta,y+\delta]$. Therefore,
\begin{multline*}
\{S_{\delta/2}(X^{(N)}(N))\cap \{ \Spec{ X^{(N)}(0)}\cap [x-\delta,x+\delta]\neq\emptyset\}, \{\|W^{({N})}\|_{\rm{op}}\le N\}\cap\Omega'_{N }\}
\\
\subset {\bigcup_{0\le p\le  N^{2}}\Bigl\{
\Spec(X^{(N)}(\frac{p}{N}))\cap[y-\delta,y+\delta]\neq \emptyset\Bigr\}.}
\end{multline*}
Hence, we conclude from \eqref{bb1} that
\begin{equation*}
\begin{split}
 &\P( \{ \Spec{ X^{(N)}(0)}\cap [x-\delta,x+\delta]\neq\emptyset\}
 \cap  \|W^{(N)}\|_{\rm{op}}\le N)\\
 &\leq 2 \P\Bigl(\bigl( \bigcup_{0\le p\le N^{2}}
 \Bigl\{\Spec(X^{(N)}(\frac{p}{N}))\cap[y-\delta,y+\delta]\neq \emptyset\bigr\}\Bigr)+e^{-c N^{1+\frac{1}{2D}}\delta^2}
\\
&\leq 2N^{2 }\P(\Spec(P_N)\cap[y-\delta,y+\delta]\neq \emptyset)
+e^{-c N^{1+\frac{1}{2D}}\delta^2},
\end{split}
\end{equation*}
where we used that $X^{(N)}(\frac{p}{N})$ has the same law as $X^{(N)}(0)$. 
Finally, since again by concentration of measure $\P( \max_i\|W_{i}^{(N)}\|_{\rm{op}}\ge N)\le e^{-cN^{2}}$,
we conclude that for some $c'(\delta)>0$ and all $N\in\N$,
\begin{equation*}
{ \P( \Spec(P_N)\cap[x-\delta,x+\delta]\neq \emptyset
)\leq 2N^{2 }\P(\Spec(P_N)\cap[y-\delta,y+\delta]\neq \emptyset)+e^{-c N^{1+\frac{1}{2D}}\delta^2}.}
\end{equation*}
The monotonicity of $\mathcal{I}_{\Spec(P)}$ follows from \eqref{wldpspec} as long as $ \P (\Spec(P_N)\cap[x-\delta,x+\delta]\neq \emptyset)$ is much larger than $e^{-cN^{2}}$ which is clear since we assumed that $\mathcal I_{\Spec(P)}(x)$ is finite.  Therefore $\inf_{[x,+\infty[}\mathcal I_{\Spec(P)}=\mathcal I_{\Spec(P)}(x)$.

\medskip
We finally  need to prove that for every $\varepsilon >0$,
    $$ \limsup_{N\to\infty}\frac{1}{N}\ln \P(\lambda_1(P)\ge x)=\limsup_{N\to\infty}\frac{1}{N}\ln \P(\lambda_1(P)\in [ x, x+\varepsilon])\, .$$ 
    
For this, one only needs to prove that for every $x,y$ such that $r_{P}< x< y$ and $\delta >0$, we have 

\begin{equation*}
    \limsup_{N \to \infty}\frac{1}{N}\ln \P( \lambda_1(P) \in ] x - \delta, x + \delta[) \geq \limsup_{N \to \infty} \frac{1}{N}\ln\P( \lambda_1(P) \in ] y - \delta, y + \delta[)
\end{equation*}
then to use a finite covering of $[x,M]$ by $M/\varepsilon$ segments of size $\varepsilon$ and the control of $\P(\lambda_1(P)\ge M)$ by \eqref{exptigl}.
    A similar result holds for the liminf, so that \eqref{wldub} and \eqref{wldlb} imply the standard weak  large deviation principle. Together with exponential tightness, this concludes the proof of Theorem \ref{thm-main}(ii) with $\mathcal I_P(x)=\mathcal{I}_{\Spec(P)}(x)$. 
For this, one can actually use the same argument as for Part (i) of the theorem. One only has to replace all the events of the form $\{\Spec(X)\cap [x- \delta,x+ \delta]\neq\emptyset\}$ and $\{\Spec(X)\cap [y- \delta,y+ \delta]\neq \emptyset\}$ with $X$ a random matrix by respectively $\lambda_1(X)\in [x- \delta,x+ \delta]$ and $\lambda_1(X)\in [y- \delta,y+ \delta]$. Since the function $\lambda_1$ by itself is $1$-Lipschitz with regard to the operator norm, the crux of the proof remains unchanged and one gets : 

\begin{equation*}
\P( \lambda_1(P_N) \in [x-\delta,x+\delta]
)\leq 2N^{2 }\P(\lambda_1(P_N)\in [y-\delta,y+\delta])+e^{-c N^{1+\frac{1}{2D}}\delta^2}.
\end{equation*}

This statement is actually closer to the original Lemma in \cite{BoursierG24}.
Taking the $\log$, dividing by $N$ and then taking the $\liminf$ and $\limsup$ gives us the desired result.
The same arguments show  the large deviation principle for the smallest eigenvalue $\lambda_N(P_N)$ (or simply change $P$ into $-P$).

\medskip
\underline{\smash{Properties of the rate function}}: We  have proven that $\mathcal I_P(x)= \mathcal I_{\Spec(P)}(x)=\mathcal{I}_{L_x}$ is the good rate function for the  large deviation principle of $\lambda_1(P)$ on $(r_P,+\infty)$. We have seen it is increasing on $]r_P,+\infty[$.  Finally, by \eqref{exptigl} and \eqref{exptih}, we see that for every $\delta>0$
    $$\limsup_{N\to\infty }\frac{1}{N}\ln \P(\lambda_1(P)>r_P+\delta)<0$$
    which implies that $\mathcal{I}_{\lambda_{\rm{max}}(P)}(x)>0$ if $x>r_P$. Of course $\mathcal{I}_{\lambda_{\rm{max}}(P)}(r_P)=0$ as $\P(\lambda_1(P)\in [r_P-\delta, r_P+\delta])$ goes to one for every $\delta$. Similarly, \eqref{convd} implies that $\mathcal{I}_{\lambda_{\rm{max}}(P)}(x)$ is infinite if $x<r_P$. It is also clear that $\mathcal{I}_{\Spec(P)}$ vanishes on the support of $\mu_P$.

\medskip
Lastly, to obtain the large deviation principle for the operator norm $\|P_N\|_{\rm{op}}$ in Part (iii) of Theorem~\ref{thm-main}, it is enough to notice that
$\|P_N\|_{\rm{op}}=\lambda_1(P_NP_N^*)^{1/2}$ with $P_NP_N^*=(PP^*)_N$ the  polynomial $PP^*$ evaluated at
$(W_1^{(N)},\ldots, W_k^{(N)})$. This gives the LDP with $\mathcal{I}_{\|P\|_{\rm{op}}}(x) =\mathcal{I}_{\lambda_{\rm{max}}(PP^*)}(x^2)$. In particular $\mathcal{I}_{\|P\|_{\rm{op}}}$ is increasing. We can get the alternative formula $\mathcal{I}_{\|P\|_{\rm{op}}}(x)=\min\{\mathcal{I}_{\Spec(P)}(x),\mathcal{I}_{\Spec(P)}(-x)\}$ by noticing that
$$\{\|P_N\|_{\rm{op}}\ge x\}
=\{\lambda_1(P_N)\ge x\}\cup
\{\lambda_N(P_N)\le -x\}=\{\lambda_1(P_N)\ge x\}\cup
\{\lambda_1(-P_N)\ge x\}$$
which implies 
$$\inf_{z\ge x}\mathcal{I}_{\|P\|_{\rm{op}}}(z)=\min\{\inf_{z\ge x}\mathcal{I}_{\lambda_{\rm{max}}(P)}(z), \inf_{z\ge x}\mathcal{I}_{\lambda_{\max}(-P)}(z)\}\,.$$
Using the monotonicity of all the involved rate functions and their relations with $\mathcal{I}_{\Spec(P)}$ yields the result.

\end{proof}
\section{Proof of Corollary \ref{cor-multiP}}\label{sec-multiP}
To simplify the notations, we consider the case $\beta=1$, but the proof generalizes to the case $\beta=2$ right away.
The exponential tightness follows readily from the one-polynomial case. 
Hence, the main issue is to prove the weak large deviation principle, namely to estimate the probability of small balls such as
$$\mathcal{B}_\varepsilon(x_1,\ldots,x_r):=
\bigcap_{1\le i\le r}\{ \Spec(P_i(W_1^{(N)},\ldots, W_k^{(N)})
)\cap [x_i-\varepsilon,x_i+\varepsilon]\neq \emptyset\}\,.$$
As in \eqref{eq-linUB} and \eqref{eq-linLB}, we easily see that this is estimating the probability of small singular values of their linearizations
$$\mathcal{B}^L_\delta(x_1,\ldots,x_r):=
\bigcap_{1\le i\le r}\{ \lambda_{N\ell_i}(L_{x_i}^{P_i}(L_{x_i}^{P_i})^*)\le \delta
\}\, ,$$
where $L_{x_i}^{P_i}$ corresponds to a linearization of the polynomial $P_i$ as in \eqref{eq-defLz} with dimension $\ell_i$.  We remark that this is equivalent to considering the block diagonal linearization $L_{x_1,\ldots,x_r}:=\diag(L_{x_1}^{P_1},\ldots,L_{x_r}^{P_r})$
and optimize on vectors with block-wise constant norms:
$$\lambda^{\rm{min}}_{x_1,\ldots,x_r}:=
\min_{u=(u^1,\ldots, u^r)\atop u^i\in \mathds{S}^{N\ell_i-1}, 1\le i\le r}\|L_{x_1,\ldots,x_r} u\|_2^2=\sum_{i=1}^r
\lambda_{N\ell_i}(L_{x_i}^{P_i}(L_{x_i}^{P_i})^*)
$$
is small. Hence, most computations are the same except we have to enforce the restriction of the vector $u$ to the span $V_s$ of the vectors $e_i$ of the canonical basis with index $i\in [1+\sum_{p\le s-1} N\ell_p,\sum_{p\le s} N\ell_p ]$ to belong to the unit sphere rather than the vector itself. This implies that the matrix
$$\rho_r(u)= \langle u_i^s,u_j^t\rangle$$
with $s,t\in \{1,\ldots, r\}, i\le \ell_s, j\le \ell_t$ belongs to the space
$\mathrm{Sym}_{\ell_1,\ldots,\ell_r}^{+,1}(\R)$ of matrices in 
$\mathrm{Sym}^+_{\sum \ell_i}(\R)$ whose restriction to $V_i=\{ e_s,s\in [\sum_{j\le i-1}\ell_j +1, \sum_{j\le i} \ell_j]$ has trace one for every $k\in\{1,\ldots,r\}$. Here, $(e_i)_{1\le i\le \sum \ell_k}$ denotes the canonical basis.

\medskip
We will only detail the arguments to get the weak large deviation principle, since exponential tightness readily follows from the one-polynomial case. We infer the following weak large deviation principle.
\begin{lemma} For every $x_1,\ldots,x_r$,
\begin{eqnarray*}
-\mathcal{I}_{L^{P_1}_{x_1},\ldots, L^{P_r}_{x_r}}&:=&\limsup_{\varepsilon\to 0}\limsup_{N\to\infty}\frac{1}{N}\log \P(
\lambda^{\rm{min}}_{x_1,\ldots,x_r}\le \varepsilon)\\
&=&\liminf_{\varepsilon\to 0}\liminf_{N\to\infty}\frac{1}{N}\log \P(
\lambda^{\rm{min}}_{x_1,\ldots,x_r}\le \varepsilon)\,.\end{eqnarray*}
Moreover, with the previous notations, 
$$-\mathcal{I}_{L^{P_1}_{x_1},\ldots, L^{P_r}_{x_r}}=
\lim_{\theta\to\infty}\sup_{\Psi } [F(\theta,\Psi)-\widetilde{\mathcal{I}}_r(\Psi)-\sum_{i=1}^r\ell_i J_{L_{x_i}^{P_i}}(\theta)]\, ,$$
where
$\widetilde{\mathcal{I}}_r(\Psi)$ is infinite unless $\Psi$ belongs to $\mathrm{Sym}_{\ell_1,\ldots,\ell_r}^{+,1}(\R)$ and then
$$\widetilde{\mathcal{I}}_r(\Psi)=-\frac{1}{2}
\{\ln\det(\Psi)+\sum \ell_i \ln \ell_i
\}\,.$$
\end{lemma}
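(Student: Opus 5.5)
The plan is to rerun the one-polynomial proof of Theorem~\ref{thm-Lbounds} for the block-diagonal linearization $L:=L_{x_1,\ldots,x_r}$. The only change is that the single spherical integral is replaced by a product of spherical integrals, one for each block.

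\textbf{Tilting.} For $\theta>0$, tilt by
$$\prod_{i=1}^r I_{N\ell_i}\bigl(-\theta,L_{x_i}^{P_i}(L_{x_i}^{P_i})^*\bigr)=\E_{u}\bigl[\exp(-\tfrac{\theta}{2}N\textstyle\sum_i \ell_i\|L^{P_i}_{x_i}u^i\|_2^2)\bigr],$$
where the $u^i$ are independent and uniform on $\mathds{S}^{N\ell_i-1}$. To keep the exact form of $F$ in the lemma, I would rescale so that the tilt reads $\exp(-\frac{\theta}{2}N\|\Lambda L u\|^2)$ with $\Lambda$ block-scalar. This amounts to absorbing $\sqrt{\ell_i}$ into the coefficient matrices $A_j$ of each block.

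Each factor is controlled by Lemma~\ref{lem-convsphint}, applied to its own limiting measure $\nu_{x_i}$. Concentration (Lemma~\ref{lem-conc}) and exponential tightness (Lemma~\ref{lem-prelimbounds}(iii)) hold for each block separately, hence for all blocks jointly. This produces the subtracted term $\sum_i\ell_i J_{L^{P_i}_{x_i}}(\theta)$.

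\textbf{Annealed computation.} For fixed $u=(u^1,\ldots,u^r)$, the vector $Lu$ is again Gaussian with the structure of Lemma~\ref{lem-annealedcalc}. The $A_j$ are now block diagonal, and $\rho_r(u)$ is the full $(\sum\ell_i)\times(\sum\ell_i)$ Gram matrix. The covariance computation goes through verbatim, giving $\exp(NF(\theta,\rho_r(u))+O(1))$, where the $O(1)$ is uniform in $u$.

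The law of $\rho_r(u)$ is that of a Gram matrix of Gaussian vectors $g^i/\|g^i\|$, each block normalised separately. I would establish its LDP by the method of \cite[Lem.~3.1]{LDP_Kronecker}. The unnormalised Wishart matrix has density proportional to $\det(X)^{(N-\sum\ell_i-1)/2}e^{-\Tr X/2}$. Conditioning on the diagonal block traces and contracting then gives the rate $-\frac12(\ln\det\Psi+\sum\ell_i\ln\ell_i)$ on $\mathrm{Sym}^{+,1}_{\ell_1,\ldots,\ell_r}(\R)$. The constant $\sum_i\ell_i\ln\ell_i$ comes from normalising each block trace separately: it is the same computation as the one-block case, performed per block.

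Varadhan's lemma then yields the annealed limit $\sup_\Psi[F-\widetilde{\mathcal I}_r]$. Combining with the tilting step, the upper bound follows exactly as in Section~\ref{sect-UB}.

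\textbf{Lower bound.} I would follow Section~\ref{sect-LB} with these adaptations.
\begin{itemize}
\item Replace each block $L^{P_i}_{x_i}(L^{P_i}_{x_i})^*$ by $H_i$, obtained by setting its smallest eigenvalue to zero.
\item Define $\sigma_0$ as the minimum over symmetric $M_j$ and block-normalised $u$ of $\sum_i\ell_i\|L^{P_i}_{x_i}(M)u^i\|^2$.
\item Show that $\sigma_0$ does not depend on $N$ for $N\ge\sum\ell_i$. As in Lemma~\ref{lem-claim1}, rotate all $\sum\ell_i$ vectors into a common subspace of dimension $\sum_i\ell_i$.
\item If $\sigma_0>0$, the rate is infinite, as in Lemma~\ref{lem-claim2}.
\item Otherwise, show $\widetilde F(\theta)/\theta\to-\sigma_0/2$ by the localisation of Lemma~\ref{lem-claim3}. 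Corner matrices of size $\sum\ell_i$ are close to the minimisers, and the off-corner pieces are small.
\item Use convexity of $\widetilde F$ to conclude that, under the tilted measure, $\E^{(\theta)}\sum_i\ell_i\|L^{P_i}_{x_i}u^i\|^2\to0$.
\end{itemize}
By Markov's inequality, each $\lambda_{N\ell_i}$ is then small with tilted probability close to $1$. The ratio argument around \eqref{ldlbeq} then gives the matching lower bound.

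\textbf{Main obstacle.} I expect the delicate point to be the block-wise normalised Wishart LDP, including the correct constant. A secondary check is that the tilted moment bound controls all blocks simultaneously. It does, because the tilt involves the weighted sum of the blocks with positive weights, so a small sum forces every term to be small.
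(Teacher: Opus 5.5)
Your proposal is correct and follows essentially the same route as the paper. You tilt by the product of block spherical integrals (equivalently by $D_\ell^{1/2}L_{x_1,\ldots,x_r}$), rerun the annealed Gaussian computation of Lemma~\ref{lem-annealedcalc} with the block-normalised profile $\rho_r(u)$ and its Wishart-type LDP, and for the lower bound adapt $\sigma_0$, Lemmas~\ref{lem-claim1}--\ref{lem-claim3} and the convexity argument; your explicit contraction over the block traces $t_i$ (minimised at $t_i=\ell_i$) is also a cleaner justification of the constant $\sum_i\ell_i\ln\ell_i$ than the paper's brief appeal to a ``conditioned'' Wishart matrix and the Laplace method.
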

\begin{proof}
The proof of the upper bound follows from the tilting argument:
$$\P(\lambda^{\rm{min}}_{x_1,\ldots,x_r}\le \varepsilon)\le e^{- N\sum_{i=1}^r (\ell_i J_{L^{P_i}_{x_i}}(\theta)+o(1))}\E_u \E_W[e^{-\frac{\theta}{2} N\sum_{i=1}^r \ell_i\|L_{x_i}^{P_i}u_i\|_2^2}]\, .$$
The computation of $$\E_W[e^{-\frac{\theta}{2} N\sum_{i=1}^r\ell_i \|L_{x_i}^{P_i}u_i\|_2^2}]
=\E_W[e^{-\frac{\theta}{2} N \|D_\ell^{1/2} L_{x_1,\ldots,x_r}u\|_2^2}]$$
with $D_\ell=\mathrm{diag}(\ell_1 \Id_{N\ell_1},\ldots, \ell_r\Id_{N\ell_r})$ and  $u=(u_1,\ldots, u_r)$ is the same as in Lemma \ref{lem-annealedcalc}. The proof of the large deviation principle for $\rho_r(u)$ is also a straightforward extension of Lemma \ref{lem-WishartLDP} as $\rho(u)$ is a Wishart matrix conditioned to belong to $\mathrm{Sym}_{\ell_1,\ldots,\ell_r}^{+,1}(\R)$: hence, its distribution is known and the result follows from classical Laplace method.

\medskip
To prove the lower bound, we first introduce the analogue of $\sigma_0^N$ in Lemma \ref{lem-claim1} where $A_0,\ldots,A_k$ are such that $L_{x_1,\ldots, x_r}=\sum A_k\otimes X_k+A_0\otimes \rm{Id}$ and the smallest singular value is replaced by 
$$\sigma_{0,r}^N=\min_{u=(u_1,\ldots,u_r)\atop u_i \in\mathds{S}^{N\ell_i-1}, 1\le i\le r}\min_{M_1,\ldots, M_k\in \mathrm{Sym}_N(\R)}\|(\sum A_k\otimes M_k+A_0\otimes I)u\|_2^2\, .$$
As in Lemma \ref{lem-claim1} we readily show that $\sigma_{0,r}^N$ does not depend on $N$ and as in Lemma \ref{lem-claim2} that $\mathcal{I}_{x_1,\ldots,x_r}$ is infinite when $\sigma_{0,r}>0$. When $\sigma_{0,r}=0$, we can show as in Lemma \ref{lem-claim3} that the annealed spherical integral is of order $e^{o(1)\theta N}$, from which we can see that under the tilted measure the spectrum of $L_{x_1,\ldots,x_r}$ has the desired deviation.  Hence, the proof is the same as in the one polynomial case.
\end{proof}

\section{Modifications for the GUE ($\beta=2$)}\label{sect-betas}
While the overall argument to prove Theorem~\ref{thm-main} and Corollary~\ref{cor-multiP} is analogous for the case $\beta=2$, there are some small modifications, especially in the covariance computation in Section~\ref{sect-UB}, that are worth pointing out. We collect them in this section.

\subsection{Upper Bound in Theorem~\ref{thm-Lbounds}}
Apart from the obvious replacements of tilting by $I_{2,N\ell}(-\theta,L_zL_z^*)$ instead of $I_{1,N\ell}(-\theta,L_zL_z^*)$ and replacing the real inner product in the definition of the profile $\rho(u)$ by its complex analog, the main change lies in the computation of the annealed spherical integral in Lemma~\ref{lem-annealedcalc}. For the GUE, the ``transposed'' term in~\eqref{eq-cov-correct} no longer appears in the covariance of $L_z^{(N)}u$ but in its pseudo-covariance. 

\medskip
To formulate the analog of Lemma~\ref{lem-annealedcalc}, we consider $\C^{\ell\times\ell}$ as a real vector space of dimension $2\ell^2$, endowed with the inner product $\langle Y,Z\rangle_{\R}:=\re\Tr(Y^*Z)$, and $\det_\R$ denotes the determinant of an $\R$-linear map. For $\Psi\in\mathrm{Sym}^{+,1}_\ell(\C)$ we set $C_\Psi:=\sum_{j=1}^kA_j\Psi A_j^*$ and $B=\Id_\ell+\theta\ell\,C_\Psi$ as before, and define
\begin{align*}
\mathcal{R}^{(2)}_\Psi(Z):=\sum_{j=1}^kA_j\Psi^{1/2}Z^*A_j\Psi^{1/2}\, ,\quad \mathcal{K}^{(2)}_\Psi(Z):=C_\Psi Z+\mathcal{R}^{(2)}_\Psi(Z)\, .
\end{align*}
Note that $\mathcal{R}^{(2)}_\Psi$ and $\mathcal{K}^{(2)}_\Psi$ are $\R$-linear but not $\C$-linear.

\begin{lemma}\label{lem-annealedcalc-GUE}
Let $\beta=2$. Under Assumption~\ref{assum-main}, for every $u$ in the unit sphere of $\C^{N\ell}$,
\begin{displaymath}
\E_W[\exp(-\theta N\ell\|L_z^{(N)}u\|^2)]=\exp(f^{(2)}_N(\theta,\rho(u)))\, ,
\end{displaymath}
where
\begin{displaymath}
f^{(2)}_N(\theta,\Psi)=-N\theta\ell\,Q^{(2)}_\theta(\Psi)-N\ln(\det(B))-\frac12\ln{\det}_\R\big(\Id_{2\ell^2}+\theta\ell B^{-1}\mathcal{R}^{(2)}_\Psi\big)\, ,
\end{displaymath}
the map $\mathcal{K}^{(2)}_\Psi$ is self-adjoint and non-negative for $\langle\cdot,\cdot\rangle_\R$, and
\begin{displaymath}
Q^{(2)}_\theta(\Psi):=\big\langle A_0\Psi^{1/2},(\Id_{2\ell^2}+\theta\ell\,\mathcal{K}^{(2)}_\Psi)^{-1}(A_0\Psi^{1/2})\big\rangle_\R\, .
\end{displaymath}

\end{lemma}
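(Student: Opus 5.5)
The plan is to mirror the proof of Lemma~\ref{lem-annealedcalc}, but to treat $L_z^{(N)}u\in\C^{N\ell}\simeq\R^{2N\ell}$ as a real Gaussian vector. Fix $u$ with $\Psi=\rho(u)$ invertible; the general case follows by density and continuity at the end, exactly as before. Since $L_z^{(N)}u=\xi+\sum_j(A_j\otimes W_j)u$ with $\xi=(A_0\otimes\Id_N)u$, the vector is complex Gaussian. Its law is determined by the covariance $\E[YY^*]$ and the pseudo-covariance $\E[YY^T]$ of the centered part. The basic GUE identities I will use are
\[
\E[(Wx)(Wy)^*]=\tfrac1N\langle y,x\rangle\Id_N,\qquad
\E[(Wx)(Wy)^T]=\tfrac1N\,\bar y\,\bar x^{T},
\]
which follow from $\E[W_{ab}W_{cd}]=\frac1N\delta_{ad}\delta_{bc}$. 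The covariance therefore has blocks $(C_\Psi)_{aa'}\Id_N$, now with $C_\Psi=\sum_jA_j\Psi A_j^*$. The would-be "transposed" term sits entirely in the pseudo-covariance.

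Next I pass to the real picture. For the real covariance operator $\Sigma_\R$ of $Y$ on $\R^{2N\ell}$, the quadratic form is
\[
x\mapsto \E[(\re\langle x,Y\rangle)^2]=\tfrac12\bigl(\langle x,\mathrm{Cov}\,x\rangle+\re\langle \bar x,\mathrm{PCov}\,x\rangle\bigr).
\]
The real Gaussian formula then gives
\[
\E[\exp(-\theta N\ell\|Y\|^2)]=\det(\Id+2\theta N\ell\Sigma_\R)^{-1/2}\exp\bigl(-\theta N\ell\langle\xi,(\Id+2\theta N\ell\Sigma_\R)^{-1}\xi\rangle\bigr).
\]
With the normalisation $\Sigma:=2N\Sigma_\R$, this becomes $\Id+\theta\ell\Sigma$.

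I will then use the same isometry as before, $\mathcal U(Z)=((Z\Psi^{-1/2})\otimes\Id_N)u$, now from $(\C^{\ell\times\ell},\langle\cdot,\cdot\rangle_\R)$ onto the real span $\mathcal V_u$ of $e_a\otimes u_b$ and $e_a\otimes iu_b$. The claims to verify are the analogues of (a)–(c):
\begin{enumerate}
\item[(a)] $\xi=\mathcal U(A_0\Psi^{1/2})$.
\item[(b)] $C_\Psi\otimes\Id_N$ is $\C$-linear and commutes with $\mathcal U$, so on $\mathcal V_u$ it acts as $Z\mapsto C_\Psi Z$.
\item[(c)] The pseudo-covariance contribution acts as an $\R$-linear map $x\mapsto \sum_j (\cdots)\bar x$ vanishing on $\mathcal V_u^\perp$, and on $\mathcal V_u$ it is conjugated to $\mathcal R^{(2)}_\Psi(Z)=\sum_jA_j\Psi^{1/2}Z^*A_j\Psi^{1/2}$.
\end{enumerate}
Given these, $\Sigma$ equals $C_\Psi\otimes\Id_N$ on $\mathcal V_u^\perp$ and is conjugate to $\mathcal K^{(2)}_\Psi$ on $\mathcal V_u$. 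This makes $\mathcal K^{(2)}_\Psi$ self-adjoint and non-negative for $\langle\cdot,\cdot\rangle_\R$, and (a) gives the exponent $-N\theta\ell Q^{(2)}_\theta(\Psi)$.

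For the determinant, $C_\Psi\otimes\Id_N$ viewed as a real operator on $\R^{2N\ell}$ has determinant $\det(B)^{2N}$. Its restriction to $\mathcal V_u$ has real dimension $2\ell^2$ and real determinant $\det(B)^{2\ell}$. On $\mathcal V_u$ I factor $\Id+\theta\ell\mathcal K^{(2)}_\Psi=B(\Id+\theta\ell B^{-1}\mathcal R^{(2)}_\Psi)$. The powers of $\det(B)$ telescope to $\det(B)^{2N}$, and the square root gives $-N\ln\det B-\frac12\ln\det_\R(\Id+\theta\ell B^{-1}\mathcal R^{(2)}_\Psi)$, as claimed.

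I expect the main obstacle to be step (c). The conjugation in the pseudo-covariance turns $Z^T$ into $Z^*$, and one must check carefully that the factors of $\frac12$ from the real/imaginary split, the GUE variance convention, and the pseudo-covariance combine into exactly $\mathcal R^{(2)}_\Psi$ with coefficient one. I would first check this on a scalar example with $\ell=k=1$ and $u$ a single vector, where everything reduces to $\re$ of $\langle x,u\rangle$ and $\langle \bar x,\bar u\rangle$. Afterwards I would run the block computation as in Lemma~\ref{lem-annealedcalc}, with $\langle v_a^{(j)},x_{a'}\rangle$ replaced by the bilinear pairing $\sum_m (v^{(j)}_a)_m(x_{a'})_m$.
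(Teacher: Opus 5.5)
Your plan matches the paper's proof. You view $G=L_z^{(N)}u-\xi$ as a real Gaussian vector with covariance $\frac{1}{2N}\mathcal S$, where $\mathcal S w=\Gamma w+\Pi\bar w$, with $\Gamma=C_\Psi\otimes\Id_N$ and $\Pi=N\E[GG^T]$. You transport through the same isometry $\mathcal U$, check (a)--(c), and telescope the determinants. Your normalization $\Sigma=2N\Sigma_\R$ already absorbs the factors of $\frac12$ you were worried about, since $2\cdot\theta N\ell\cdot\frac{1}{2N}=\theta\ell$.

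There is, however, a concrete error in exactly the step on which (c) hinges: the pseudo-covariance is stated with a spurious conjugation.

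\begin{itemize}
\item From $\E[W_{ab}W_{cd}]=\frac1N\delta_{ad}\delta_{bc}$ one gets $\E[(Wx)_a(Wy)_c]=\frac1N y_a x_c$, i.e.\ $\E[(Wx)(Wy)^T]=\frac1N\,y\,x^T$. The left side is bilinear in $(x,y)$, so $\frac1N\bar y\,\bar x^T$ cannot be right.
\item Correspondingly, $\E[(\re\, x^*G)^2]=\frac{1}{2N}\bigl(x^*\Gamma x+\re(x^*\Pi\bar x)\bigr)$. So the relevant $\R$-linear operator is $x\mapsto\Pi\bar x$, with blocks $\Pi_{aa'}=\sum_j v^{(j)}_{a'}(v^{(j)}_a)^T$.
\item The pairing that replaces $\langle v^{(j)}_a,x_{a'}\rangle$ is therefore $(v^{(j)}_a)^T\bar x_{a'}=\sum_m (v^{(j)}_a)_m\overline{(x_{a'})_m}$, not the bilinear $\sum_m (v^{(j)}_a)_m(x_{a'})_m$ you propose.
\end{itemize}

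This is not cosmetic. Take $x=\mathcal U(Z)$ and $X=Z\Psi^{-1/2}$.
\begin{itemize}
\item The correct pairing gives $u_b^T\bar x_{a'}=(\Psi X^*)_{ba'}$. Hence $(\Pi\bar x)_a=\sum_j\sum_d(A_j\Psi X^*A_j)_{ad}\,u_d$, i.e.\ $\Pi\bar x=\mathcal U(\mathcal R^{(2)}_\Psi(Z))$ with coefficient one. This is where $Z^*$ comes from.
\item For $x\perp\mathcal V_u$ one gets $u_b^T\bar x_{a'}=\overline{u_b^*x_{a'}}=0$.
\item Your pairing instead produces the bilinear Gram matrix $(u_b^Tu_c)_{b,c}$, which is not a function of $\rho(u)$. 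It also does not vanish on $\mathcal V_u^\perp$, since $u_b^Tx_{a'}\neq0$ in general even when $u_b^*x_{a'}=0$. The reduction to $\mathcal K^{(2)}_\Psi$, and with it the claim that the expectation depends on $u$ only through $\rho(u)$, would fail.
\end{itemize}

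Your quadratic-form expression $\re\langle\bar x,\mathrm{PCov}\,x\rangle$ carries a compensating conjugation, so the two slips cancel there, but not in the block computation you describe. With these corrections the rest of your argument goes through as in the paper.
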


\begin{proof}
Let $\xi=(A_0\otimes\Id_N)u$ and $G=L_z^{(N)}u-\xi=\sum_j(A_j\otimes W_j)u$. For a GUE matrix $W$ and deterministic $x,y\in\C^N$, we have
\begin{displaymath}
\E\big[(Wx)(Wy)^*\big]=\frac1N\,(y^*x)\,\Id_N,\qquad \E\big[(Wx)(Wy)^T\big]=\frac1N\,y\,x^T .
\end{displaymath}
Setting $v^{(j)}_a=\sum_b(A_j)_{ab}u_b$ as before, we deduce that $\Gamma:=N\E[GG^*]=C_\Psi\otimes\Id_N$, while the pseudo-covariance $\Pi:=N\E[GG^T]$ can be decomposed into $N\times N$ blocks of the form
\begin{displaymath}
\Pi_{aa'}=\sum_{j=1}^kv^{(j)}_{a'}\big(v^{(j)}_a\big)^T\, .
\end{displaymath}
We see $G$ as a real Gaussian vector of $\C^{N\ell}\simeq\R^{2N\ell}$, with the scalar product $\re(x^*y)$. Since $\E[(\re w^*G)^2]=\frac{1}{2N}\re\big(w^*\Gamma w+w^*\Pi\bar w\big)$ and $\Pi$ is symmetric, its covariance is the $\R$-linear operator $\frac{1}{2N}\mathcal{S}$, where $\mathcal{S}w:=\Gamma w+\Pi\bar w$. The real Gaussian integration formula used in the proof of Lemma~\ref{lem-annealedcalc}, applied with $\theta N\ell$ in place of $\theta$, gives
\begin{equation}\label{eq-expectationLu-GUE}
\E_W[\exp(-\theta N\ell\|L_z^{(N)}u\|^2)]=\frac{\exp\big(-\theta N\ell\,\re\langle\xi,(\Id+\theta\ell\mathcal{V}_u)^{-1}\xi\rangle\big)}{\sqrt{{\det}_\R(\Id+\theta\ell\mathcal{S})}} .
\end{equation}
Assuming that $\Psi=\rho(u)$ is invertible, let $\mathcal{V}_u:=\mathrm{span}_\C\{e_a\otimes u_b\}$ and $\mathcal{U}(Z):=((Z\Psi^{-1/2})\otimes\Id_N)u$. Since $((X\otimes\Id_N)u)^*(Y\otimes\Id_N)u=\Tr(Y\Psi X^*)$, $\mathcal{U}$ is an isometry from $(\C^{\ell\times\ell},\langle\cdot,\cdot\rangle_\R)$ onto $\mathcal{V}_u$. As in the proof of Lemma~\ref{lem-annealedcalc}, we have $\xi=\mathcal{U}(A_0\Psi^{1/2})$ and $\Gamma\,\mathcal{U}(Z)=\mathcal{U}(C_\Psi Z)$. Moreover, if $x=\mathcal{U}(Z)$ and $X=Z\Psi^{-1/2}$, then $u_b^T\bar u_d=\Psi_{bd}$ gives $u_b^T\bar x_{a'}=(\Psi X^*)_{ba'}$. Hence,
\begin{displaymath}
(\Pi\bar x)_a=\sum_{j=1}^k\sum_{a'}v^{(j)}_{a'}(A_j\Psi X^*)_{aa'}=\sum_{j=1}^k\sum_d(A_j\Psi X^*A_j)_{ad}\,u_d ,
\end{displaymath}
i.e., $\Pi\bar x=\mathcal{U}(\mathcal{R}^{(2)}_\Psi(Z))$. If $x\perp \mathcal{V}_u$, then $u_b^T\bar x_{a'}=\overline{u_b^*x_{a'}}=0$ and $\Pi\bar x=0$. Therefore, $\mathcal{V}_u$ and $\mathcal{V}_u^\perp$ are invariant under $\mathcal{S}$, $\mathcal{S}=C_\Psi\otimes\Id_N$ on $\mathcal{V}_u^\perp$, and $\mathcal{S}$ is conjugated to $\mathcal{K}^{(2)}_\Psi$ on $\mathcal{V}_u$. In particular $\mathcal{K}^{(2)}_\Psi$ is self-adjoint and non-negative and $\re\langle\xi,(\Id+\theta\ell\mathcal{S})^{-1}\xi\rangle=Q^{(2)}_\theta(\Psi)$.

\medskip
Lastly, we consider the determinant in the denominator of~\eqref{eq-expectationLu-GUE}. Recall that the real determinant of a $\C$-linear map is the squared modulus of its complex determinant. Hence, ${\det}_\R(\Id+\theta\ell\,C_\Psi\otimes\Id_N)=\det(B)^{2N}$, and the restriction of this map to $\mathcal{V}_u$ is conjugated to $Z\mapsto BZ$, of real determinant $\det(B)^{2\ell}$. Since $\Id+\theta\ell\mathcal{K}^{(2)}_\Psi=B(\Id+\theta\ell B^{-1}\mathcal{R}^{(2)}_\Psi)$ on $\C^{\ell\times\ell}$, we get
\begin{displaymath}
{\det}_\R(\Id+\theta\ell\mathcal{S})=\det(B)^{2N}\,{\det}_\R\big(\Id_{2\ell^2}+\theta\ell B^{-1}\mathcal{R}^{(2)}_\Psi\big).
\end{displaymath}
Inserting the above identities into~\eqref{eq-expectationLu-GUE} proves the lemma for invertible $\Psi$. The general case follows by continuity and density exactly as in the proof of Lemma~\ref{lem-annealedcalc}. 
\end{proof}

Consequently, writing $f^{(2)}_N=2NF^{(2)}+\widetilde{f}^{(2)}_N$ with
\begin{equation}\label{eq-defF-GUE}
F^{(2)}(\theta,\Psi)=-\frac{1}{2}\theta\ell\,Q^{(2)}_\theta(\Psi)-\frac{1}{2}\ln\det(\Id_\ell+\theta\ell\,C_\Psi),
\end{equation}
we have $\sup_\Psi|\widetilde f^{(2)}_N(\theta,\Psi)|\leq\ell^2\ln(1+2\theta\ell\sum_j\|A_j\|^2)$, by the same argument as in the GOE case.

\medskip
As in Section~\ref{sect-UB}, the convergence of the annealed spherical integral now follows from Varadhan's lemma, using Lemma~\ref{lem-annealedcalc-GUE} and Lemma~\ref{lem-WishartLDP} as input.

\begin{remark}
In the real case, the same notation reads $F^{(1)}(\theta,\Psi)=\frac12\big(-\theta\ell Q_\theta-\ln\det(\Id+\theta\ell C_\Psi)\big)$ with $\mathcal{R}_\Psi(Z)=\sum_jA_j\Psi^{1/2}Z^TA_j\Psi^{1/2}$. Hence, for $\beta\in\{1,2\}$, $$F^{(\beta)}(\theta,\Psi)=\big(-\frac{\theta}{2}\ell Q^{(\beta)}_\theta-\frac{1}{2}\ln\det(\Id+\theta\ell C_\Psi)\big),$$ where $Z^T$ is replaced by $Z^*$ when $\beta=2$. For $P=X$ (i.e., $\ell=k=1$, $A_0=z$, $A_1=-1$), one finds $\mathcal{K}^{(2)}_1(Z)=2\re Z$ and $Q^{(2)}_\theta(1)=Q_\theta(1)=z^2/(1+2\theta)$.
\end{remark}

\subsection{Lower Bound in Theorem~\ref{thm-Lbounds}}
As for the upper bound, we consider $L_z^{(N)}u$ as a real Gaussian vector of $\R^{2N\ell}$ with covariance $\frac1{2N}\mathcal{S}$. When applying Lemma~\ref{lem-tiltedGaussian}, the analog of Equation~\eqref{eq-tiltedex} reads
\begin{displaymath}
\E^{(u,\theta)}[\|L_z^{(N)}u\|_2^2]=\frac{1}{2N}\Tr_\R[\mathcal{S}(\Id+\theta\ell\mathcal{S})^{-1}]+\|(\Id+\theta\ell\mathcal{S})^{-1}\xi\|_2^2\, .
\end{displaymath}
For $A_0=0$, the argument holds unchanged, and we only note some minor modifications for the general case. 

\medskip
The analog of the constant $\sigma_0$ is given by
\begin{displaymath}
\sigma^N_0 := \min_{M_1,\dots,M_{k} \in\mathrm{Sym}_N(\C)} \Bigl\{ \sigma_{\min}\Bigl( \sum_{k} A_k \otimes M_k + A_0 \otimes \Id_N\Bigr)^2 \Bigr\}\, ,
\end{displaymath}
from which rest of the proof of Lemma~\ref{lem-claim2} is readily adapted. In particular, $2J_{\nu_z}(\theta)$ still grows like $\ln\theta$, such that the conclusion of the lemma is unchanged. 

\medskip
For Lemma~\ref{lem-claim3}, we replace any properties of the GOE used in the proof by the corresponding properties of the GUE: The $\ell\times\ell$ top-left corner of a GUE matrix has density proportional to $\exp(-\frac N2\Tr(H^2))$ on $\mathrm{Sym}_\ell(\C)$ and the trace $\Tr(Y_j^N(Y_j^N)^*)$ is a Gamma variable with shape parameter $(N-\ell)\ell$ and scale parameter $1/N$, which also satisfies a large deviation principle with speed $N$, but the rate function is given by $x\mapsto x-\ell-\ell\ln(x/\ell)$ here. Similarly, an estimate of the form~\eqref{eq-uminusuhat} holds for $u$ uniformly distributed on the complex unit sphere if we double the exponent on the right-hand side to account for the change in dimension. While these modifications change the value of the constant $c(\epsilon)$, the claim of the lemma holds unchanged.

\medskip
With Theorem~\ref{thm-Lbounds} adapted to the case $\beta=2$, the argument to prove the main results is essentially unchanged.

\appendix
\section{Additional Proofs}\label{sect-app}

\subsection{Proof of Lemma~\ref{lem-conc}}
We start by noting that both the existence of the limit $\nu_z$ and the concentration result are immediate from~\cite{LDP_Kronecker} whenever $A_0,\dots,A_k$ are self-adjoint. Indeed, let us denote
\begin{displaymath}
\mu_H(\Pi_i,\Pi_j)=\frac{1}{N}\sum_{a=1}^{N\ell}\langle \Pi_iv_a(H),\Pi_jv_a(H)\rangle\delta_{\lambda_a(H)}
\end{displaymath}
for a self-adjoint matrix $H\in\K_\beta^{N\ell\times N\ell}$ with eigenvectors $v_a(H)$ and eigenvalues $\lambda_a(H)$. Here, the projections $\Pi_i$ and $\Pi_j$ map the vector $v_a(H)\in\K_\beta^{N\ell}$ to its $i$-th and $j$-th block of length $N$, respectively. By setting
\begin{displaymath}
\frac{1}{\ell}\sum_{j=1}^\ell\mu_H(\Pi_j,\Pi_j)=\frac{1}{N\ell}\sum_{a=1}^{N\ell}\delta_{\lambda_a(H)}\, ,
\end{displaymath}
we obtain the empirical eigenvalue measure, and the desired result follows from~\cite[Thm.~4.4]{LDP_Kronecker}, since
\begin{align*}
\P(d_{KR}(\mu_N,\nu_z)\geq\epsilon)&\leq\P(\exists j\in\{1,\dots,\ell\}:d_{KR}(\mu_{L_z^{(N)}(L_z^{(N)})^*}(\Pi_j,\Pi_j),\mu_{j,j})\geq\epsilon)\\
&\leq\P(\exists i,j\in\{1,\dots,\ell\}:d_{KR}(\mu_{L_z^{(N)}(L_z^{(N)})^*}(\Pi_i,\Pi_j),\mu_{i,j})\geq\epsilon)\, ,
\end{align*}
where $d_{KR}$ denotes the Kantorovich-Rubinstein distance and we follow the notation in~\cite[Sect.~1.1]{LDP_Kronecker} for the limits $\mu_{i,j}$. Recall that
\begin{displaymath}
d_{KR}(\nu_1,\nu_2)=\sup_{\substack{\|f\|_\infty\leq1,\\ \mathrm{Lip}(f)\leq1}}\int f\dx(\nu_1-\nu_2)\, ,
\end{displaymath}
where $\mathrm{Lip}(f)$ denotes the Lipschitz constant associated to a Lipschitz function $f$.

\medskip
As the linearization $L_z^{(N)}$ is not necessarily self-adjoint, we start by looking at a suitable symmetrization. As $W_1,\dots,W_k$ are GOE or GUE matrices and in particular self-adjoint, we have that
\begin{equation}\label{eq-defsymm}
\mathcal{L}_z^{(N)}:=\begin{pmatrix}0&L_z^{(N)}\\(L_z^{(N)})^*&0\end{pmatrix}=\sum_{j=1}^k\begin{pmatrix}0&A_j\\A_j^*&0\end{pmatrix}\otimes W^{(N)}_j+\begin{pmatrix}0&A_0\\A_0^*&0\end{pmatrix}\otimes \Id_N\, .
\end{equation}
Note that~$\mathcal{L}_z^{(N)}$ is a Gaussian Kronecker matrix in the sense of~\cite{LDP_Kronecker}. Moreover, if we denote the empirical eigenvalue measure of $\mathcal{L}_z^{(N)}$ by $\mu_{\mathcal{L}_z^{(N)}}$, we have the relation
\begin{displaymath}
\int_{\R^+}f(x)\mu_{L_z^{(N)}(L_z^{(N)})^*}(\dx x)=\int_\R f(x^2)\mu_{\mathcal{L}_z^{(N)}}(\dx x)\, ,
\end{displaymath}
from which the claim of Lemma~\ref{lem-conc} follows. In particular,~\eqref{eq-defdsq} emerges as a natural distance in this setting.

\subsection{Proof of Lemma~\ref{lem-prelimbounds}}
(i) This is an immediate consequence of the above proof of Lemma~\ref{lem-conc}.

\medskip
(ii) While $L_z^{(N)}(L_z^{(N)})^*$ may not match the definition of a Gaussian Kronecker matrix in~\cite{LDP_Kronecker}, the symmetrization $\mathcal{L}_z^{(N)}$ in~\eqref{eq-defsymm} does. Hence, by~\cite[Lem.~1.2]{LDP_Kronecker} (or~\cite[Thm.~4.7]{AEKN2019}),
\begin{displaymath}
\lim_{N\to\infty}\P(|\lambda_1(\mathcal{L}_z^{(N)})-\widetilde{r}_z|\geq\delta)=0
\end{displaymath}
for any $\delta>0$, where $\widetilde{r}_z$ denotes the right endpoint of the support of the corresponding limiting measure. Noting that the largest eigenvalue of $\mathcal{L}_z^{(N)}$ is given by the largest singular value $\sigma_{\max}$ of $L_z^{(N)}$ and that the largest eigenvalue of $L_z^{(N)}(L_z^{(N)})^*$ is given by $\sigma_{\max}^2$, the convergence of the largest eigenvalue readily follows from the preservation of convergence in probability under continuous mappings. The convergence of the smallest eigenvalue follows also from
\cite[Thm.~4.7(i)]{AEKN2019} which asserts the convergence of the spectrum.

\medskip
(iii) Noting $\|L_z^{(N)}(L_z^{(N)})^*\|_{\rm{op}}=\|L_z^{(N)}\|_{\rm{op}}^2$ and $\|\mathcal{L}_z^{(N)}\|_{\rm{op}}=\|L_z^{(N)}\|_{\rm{op}}$, we obtain
\begin{displaymath}
\frac{1}{N}\ln\P(\|L_z^{(N)}(L_z^{(N)})^*\|_{\rm{op}}>\alpha)=\frac{1}{N}\ln\P(\|L_z^{(N)}\|_{\rm{op}}>\sqrt{\alpha})=\frac{1}{N}\ln\P(\|\mathcal{L}_z^{(N)}\|_{\rm{op}}>\sqrt{\alpha})\, .
\end{displaymath}
The claim now follows directly from~\cite[Lem.~4.5]{LDP_Kronecker}.

\renewcommand*{\bibname}{References}

\interlinepenalty=10000

\bibliographystyle{plain}
\bibliography{References_LDP_Poly}

\end{document}